\title{Monoidal Width: Unifying Tree Width, Path Width and Branch Width}
\author{Elena {Di Lavore}}{Tallinn University of Technology, Estonia}{elendi@ttu.ee}{https://orcid.org/0000-0002-7783-5079}{}
\author{{Pawe\l} Soboci\'nski}{Tallinn University of Technology, Estonia}{pawel.sobocinski@ttu.ee}{https://orcid.org/0000-0002-7992-968}{}
\authorrunning{E. Di Lavore and P. Soboci\'nski}
\keywords{monoidal width, decomposition, monoidal category, graph, tree width.}
\begin{document}
\maketitle
\begin{abstract}
  We introduce \emph{monoidal width} as a measure of the difficulty of decomposing morphisms in monoidal categories.
  For graphs, we show that monoidal width and two variations capture existing notions, namely branch width,
  tree width and path width.
  We propose that monoidal width: (i) is a promising concept that, while capturing known measures, can similarly be instantiated in other settings, avoiding the need for ad-hoc domain-specific definitions and (ii) comes with a general, formal algebraic notion of \emph{decomposition} using the language of monoidal categories.
\end{abstract}

\section{Introduction}\label{sec:intro}
% Mission statement
% Graph widths
\setlength{\intextsep}{0pt}
\begin{wrapfigure}{r}{0.4\textwidth}
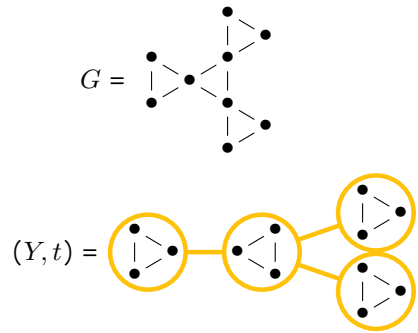

  \centering
  \treeDecExFig{}
  \caption{The cost of a tree decomposition \((Y,t)\) of a graph \(G\) records the cost of decomposing \(G\) into a tree shape. The cost of \((Y,t)\) is \(3\) as its biggest component has three vertices.}\label{fig:tree-dec}
\end{wrapfigure}
Tree width is a measure of complexity for graphs that was independently defined by different authors~\cite{bertele1973treewidth,halin1976treewidth,robertson1986graph-minorsII}. Every nonempty graph has a tree width, which is a positive integer.
The interest in this concept is partly due to its algorithmic properties. For example, important
problems that are NP-hard on generic graphs have linear time algorithms on graphs with bounded tree width~\cite{bodlaender1992tourist,bodlaender2008combinatorial,courcelle1990monadic}.
Similar motivations lead to the definitions of other notions of complexity for graphs such as path width~\cite{robertson1983graph-minorsI}, branch width~\cite{robertson1991graph-minorsX}, rank width~\cite{oum2006rank-width}, clique width~\cite{courcelle2000upper-clique} and cut width~\cite{adolphson1973optimal,aharonov2006quantum,chudnovsky2011well}. All of them share a similar basic idea: in each case, a specific notion of legal decomposition is priced according to the most expensive operation involved, and the price of the cheapest decomposition is the width.
We will generically refer to tree width, path with, branch width etc.\ as \emph{graph widths}.

%Graph width intuitions
Graph widths record the cost of decomposing a graph according to some decomposition rules.
For instance, tree width records---roughly speaking---how costly it is to decompose a graph in a tree shape, where the cost is given by the maximum number of vertices of the components of a decomposition. An example decomposition is  in~\Cref{fig:tree-dec}.

% Borrowing,
The algebra of monoidal categories can be seen as a general process algebra of concurrent processes, featuring sequential and parallel composition.
In recent years it has been used to describe artefacts of a computational nature; e.g.\ Petri nets~\cite{fong2018seven}, quantum circuits~\cite{Coecke2017,DuncanKPW20}, signal flow graphs~\cite{fong2018seven,Bonchi0Z21}, electrical circuits~\cite{Comfort2021,Boisseau2021}, digital circuits~\cite{GhicaJL17}, stochastic processes~\cite{fritz2020,cho2019} and games~\cite{GhaniHWZ18} to name just a few.  However, while semantics of computational artefacts is often compatible with the algebra of categories, \emph{performance} sometimes is not.
\setlength{\intextsep}{0pt}
\begin{wrapfigure}{l}{0.4\textwidth}
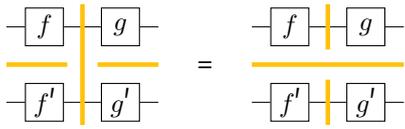

  \centering
  \interchangelawDecFig{}
  \caption{This morphism can be decomposed in two different ways: \((f \tensor f') \dcomp (g \tensor g') = (f \dcomp g) \tensor (f' \dcomp g')\). The left one is more costly as we need to synchronize the two processes on the common boundary when composing them.}\label{fig:interchange-law}
\end{wrapfigure}
Consider the interchange law, illustrated in~\Cref{fig:interchange-law}. If, as is usually the case, the boxes represent some kind of processes, then composing along a common boundary involves synchronisation, coordination or resource sharing. Then to compute the composite system efficiently, it is usually a good idea to minimise the size of the boundary along which one composes. An example is compositional reachability checking in Petri nets of Rathke et.\ al.~\cite{rathke2014compositional}: calculating the sequential composition is exponential in the size of the boundary. On the other hand, the monoidal product is usually cheap since---as indicated by the wiring in the string diagrams---there is no information sharing between the components. In other words, the right hand side of~\Cref{fig:interchange-law} is a more efficient way to compute: performance does not respect the middle-four interchange law of monoidal categories!

%Algorthmic considerations are becoming increasingly important and efficient decompositions of morphisms can lead to performance improvements~\cite{?}.

Our main contribution is to borrow the ideas behind graph widths to make this idea precise, and to measure the complexity of morphisms in monoidal categories. We introduce the concept of \emph{monoidal width} and two variants.

\medskip
%We are hopeful that our work can also be of use more widely.
General approaches have the potential to be of wider use.
For example, the various notions of graph decompositions, while clearly similar~\cite{pudlak1988graph-complexity,courcelle1990monadic}, are quite concrete and dependent on the underlying graph model. For example, tree width is traditionally defined for undirected graphs, and the seemingly mild generalisation to directed graphs has already resulted in several works~\cite{johnson2001directed,berwanger2012dag,hunter2008digraph,safari2005d}. A more general approach helps to clarify the research landscape and inform appropriate instantiations for specific models of interest. Second, the \emph{optimal decomposition} itself is a valuable piece of data that is discarded when talking about width as a mere number. As mentioned previously, decompositions in the literature are defined specifically for individual graph models and while they carry the intuition of obtaining composite graphs from simpler components, an \emph{explicit algebra of composition} is often missing. With category theory in the picture, we shift the focus from a number to formal, executable expressions that describe optimal decompositions.

\textbf{Contributions.}
%We answer both our questions with one definition.
We introduce \emph{monoidal width}, following the idea of the cost of decomposing a morphism into compositions and monoidal products of chosen atomic morphisms.
%As a simple case study, we examine monoidal width in the category of matrices.
Monoidal width, and restricted versions of it, are %then
instantiated to the category of cospans of graphs to recover the usual notions of branch width, tree width and path width.
These results build a bridge between the algebraic and the combinatorial approaches to graphs.
%The ideas behind the definition of monoidal width seem more general than the ones explored in this paper.
%We leave further investigations as future work.

\textbf{Structure of the paper.}
In~\Cref{sec:monoidal-widths} we define monoidal width and its versions restricted to tree and path shapes.
The definitions (as in~\cite{robertson1983graph-minorsI,robertson1986graph-minorsII,robertson1991graph-minorsX}) of tree width, path width and branch width are recalled in~\Cref{sec:graphs} and given alternative recursive characterisations.
%We transition to a more algebraic view of these notions by giving equivalent recursive definitions of tree width, path width and branch width in~\Cref{sec:graphs}.
We show, in~\Cref{sec:mwd-graph-widths}, that these correspond to branch width, tree width and path width, respectively, when instantiated in the category of cospans of graphs, introduced in~\Cref{sec:cospan-graph}.
%In~\Cref{sec:mwd-other-places}, we explore instatiations of monoidal width in other categories of graphs.

\textbf{Related work.}
%Our work is related to previous algebraic approaches to graph widths.
The work of Pudl{\'a}k, R{\"o}dl and Savick{\`y}~\cite{pudlak1988graph-complexity}
%is the first one, to the best of our knowledge, to
addresses the complexity of graphs in a syntactical way: the authors define the complexity of a graph to be the minimum number of operations needed to define a graph.
%, given a set of operations and generators.
 Bauderon and Courcelle~\cite{bauderon1987graph} follow a similar idea and define a language to construct graphs from given generators. In particular, the cost of a decomposition is measured by counting \emph{shared names}, which is clearly closely related to penalising sequential composition as in monoidal width. Nevertheless, these approaches are specific to particular, concrete notions of graphs, whereas our work concerns the more general algebraic framework of monoidal categories.

Abstract approaches to width have received some attention recently, with a number of diverse contributions.
%Our work is related to other categorical approaches to tree width.
Blume et.\ al.~\cite{blume2011treewidth}, similarly to our work,
%characterise tree and path width using category of graphs and graph homomorphisms.
%We share with this work the
 use (the category of) cospans of graphs as a formal setting to study graph decompositions: indeed, a major insight of loc.\ cit.\ is that tree decompositions are tree-shaped diagrams in the cospan category, and the original graph is reconstructed as a colimit of such a diagram. Our approach is more general, however, emphasising the relevance of the algebra of monoidal categories, of which cospan categories are just one family of examples.
Abramsky et.\ al.~\cite{feder1998computational} give a coalgebraic characterization of  tree width of relational structures (and graphs in particular).
Bumpus and Kocsis~\cite{bumpus2021spined} also generalise tree width to the categorical setting, although their approach is technically far removed from ours: they generalise tree width to be a functor satisfying some properties, relying on characterisation of tree width in terms of Halin's \(S\)-functions~\cite{halin1976treewidth}.
\setlength{\intextsep}{0pt}
\begin{wrapfigure}{r}{0.45\textwidth}
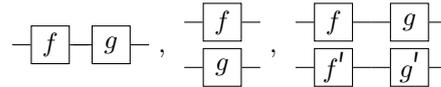

  \centering
  \sequentialFig{}, \parallelFig{}, \interchangelawFig{}
  \setlength{\belowcaptionskip}{0pt}
  \caption{String diagrammatic notation.}\label{fig:string-diagrams}
\end{wrapfigure}
\textbf{Preliminaries.}
%Monoidal categories are an algebra of sequential and parallel composition.
%In computer science, they have found many applications for modelling processes like dynamical systems~\cite{sth}, quantum circuits~\cite{sth}, or Petri nets~\cite{sth}.
%The syntax of monoidal categories is intrinsically two dimensional as we need to represent two kinds of compositions:
We use string diagrams~\cite{joyal1991geometry,selinger2010survey}:
sequential and parallel composition of \(f\) and \(g\) is drawn as in \Cref{fig:string-diagrams}, left and middle, respectively.
Much of the bureaucracy, e.g. the interchange law \((f \dcomp g) \tensor (f' \dcomp g') = (f \tensor f') \dcomp (g \tensor g')\), disappears (\Cref{fig:string-diagrams}, right).
\emph{Props}~\cite{MacLane1965,Lack2004a} are important examples of monoidal categories.
They are symmetric strict monoidal, with natural numbers as objects, and addition as monoidal product on objects. Roughly speaking, morphisms can be thought of as processes, and the objects (natural numbers) keep track of the number of inputs or outputs of a process.
%This means that we can interpret the sequential composition of \(f\) and \(g\) along the object \(n\) as the synchronisation of \(f\) and \(g\) along \(n\) channels.
%The complexity of this operation is, typically, exponential in the number \(n\) of synchronisation channels.
%We will capture this idea by assigning cost \(n\) to a sequential composition along \(n\) channels.

%The props that we consider in the present paper are generated by a finite number of morphisms and equations, as shown in~\Cref{fig:bialgebra}.

\section{Monoidal widths}\label{sec:monoidal-widths}
In this section we introduce the central original concepts of the paper:
 \emph{monoidal width}
  %, which is a notion of complexity for morphisms in monoidal categories
   and
two variations called \emph{monoidal tree width} and \emph{monoidal path width}.
Each has a notion of \emph{decomposition}:
given a morphism $f$, a decomposition is a tree with internal nodes labelled with operations $\{\,\dcomp\,,\otimes\}$ of monoidal categories, and leaves labelled with atomic morphisms. Evaluating a valid decomposition yields $f$.
But, in general, $f$ can be decomposed in different ways.
The \emph{width} is the cost of the ``cheapest'' decomposition.
The basic idea of ``paying a price'' for performing an operation is captured by the following:
\begin{definition}
  Let \(\cat{C}\) be a monoidal category and let \(\decgenerators\) be a set of morphisms in \(\cat{C}\), which we shall refer to as \emph{atomic}.
  A \emph{weight function} for \((\cat{C},\decgenerators)\) is a function \(\nodeweight \colon \decgenerators \union \monoidaloperations{\cat{C}} \to \naturals\) such that: (i) \(\nodeweight(X \tensor Y) = \nodeweight(X) + \nodeweight(Y)\), and (ii) \(\nodeweight(\tensor) = 0\).
\end{definition}
If $\cat{C}$ is a prop then typically we let $\nodeweight(1)\defn 1$.
We do not assume anything about the structure of atomic morphisms in $\decgenerators$; they merely do not necessarily need to be decomposed further.

%\subsection{Monoidal width}
We shall consider three kinds of decomposition of morphisms in monoidal categories.
In the first kind, \emph{monoidal decomposition}, sequential composition and monoidal product can be used without restriction. Monoidal decompositions, when instantiated in the category of cospans of graphs, correspond to branch decompositions, as shown in~\Cref{sec:mwd-branch-width}.

%A monoidal decomposition is a syntactic presentation of a morphism as compositions and monoidal products of some chosen atomic morphisms.
%\begin{definition}[Monoidal decomposition]
%  Let \(\cat{C}\) be a monoidal category and \(\decgenerators\) be a set of its morphisms.
%  A monoidal decomposition of a morphism \(f \colon A \to B\) is one of the following.
%  \begin{itemize}
%    \item A leaf with a label \((S,\mlabelling) = \leafgenerator{f}\), if \(f \in \decgenerators\).
%    \item A labelled binary tree \((S,\mlabelling) = \nodegenerator{(S_{1},\mlabelling_{1})}{\tensor}{(S_{2},\mlabelling_{2})}\), if \((S_{i},\mlabelling_{i})\) is a monoidal decomposition of \(f_{i}\) and \(f = f_{1} \tensor f_{2}\).
%    \item A labelled binary tree \((S,\mlabelling) = \nodegenerator{(S_{1},\mlabelling_{1})}{\dcomp_{C}}{(S_{2},\mlabelling_{2})}\), if \((S_{i},\mlabelling_{i})\) is a monoidal decomposition of \(f_{i}\) and \(f = f_{1} \dcomp_{C} f_{2}\).
%  \end{itemize}
%\end{definition}
\begin{definition}[Monoidal decomposition]\label{defn:monoidalDecomposition}
  Let \(\cat{C}\) be a monoidal category and \(\decgenerators\) be a set of morphisms.
  The set $\decset{f}$ of \emph{monoidal decompositions} of \(f \colon A \to B\) in $\cat{C}$ is defined recursively:
  \begin{align*}
  \decset{f} \quad \Coloneqq \quad & \leafgenerator{f} &\text{if } f \in \decgenerators \\
                      \mid \quad & \nodegenerator{d_{1}}{\tensor}{d_{2}} &\text{if } d_1\in \decset{f_1},\,d_2\in \decset{f_2} \text{ and } f =_\cat{C} f_1\tensor f_2 \\
                      \mid \quad & \nodegenerator{d_{1}}{\dcomp_{X}}{d_{2}} &\text{if }d_1\in \decset{f_1 \colon A\to X},\,d_2\in \decset{f_2 \colon X\to B} \text{ and }f =_\cat{C} f_1\dcomp f_2
  \end{align*}
\end{definition}
\begin{example}\label{ex:mon-dec}
  Let \(f \colon 1 \to 2\) and \(g \colon 2 \to 1\) be morphisms in a prop such that $\nodeweight(f)=\nodeweight(g)=2$.
  The diagram in \Cref{fig:ex-mon-dec}, left, represents the monoidal decomposition of \(f \dcomp (f \tensor f) \dcomp (g \tensor g) \dcomp g\) given by
  \(\nodegenerator{f}{\dcomp_{2}}{\nodegenerator{\nodegenerator{\nodegenerator{f}{\dcomp_{2}}{g}}{\tensor}{\nodegenerator{f}{\dcomp_{2}}{g}}}{\dcomp_{2}}{g}}\).
\end{example}
\begin{figure}[h!]
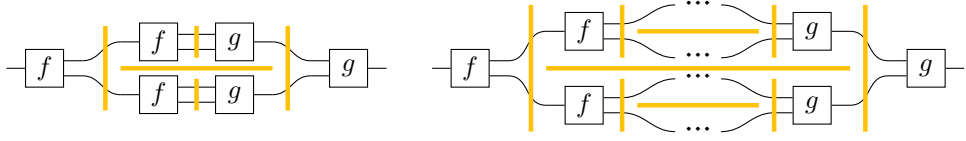

  \centering
  \monoidaldecExFig{}\quad
  \mwdNumberLikeExFig{}
  \caption{Examples of efficient monoidal decompositions.}\label{fig:ex-mon-dec}
\end{figure}

The cost, or \emph{width}, of a decomposition is the weight of the most expensive operation. The decomposition in~\Cref{ex:mon-dec} has width \(2\) as compositions are along at most \(2\) wires.

%\begin{definition}[Width of a monoidal decomposition]
  %Let \(\cat{C}\) be a monoidal category and \(\decgenerators\) be a family of its morphisms.
%  Let \(\nodeweight\) be a weight function for \((\cat{C},\decgenerators)\).
%  Let \((S,\mlabelling)\) be a monoidal decomposition of a morphism \(f\).
%  The width of \((S,\mlabelling)\) is defined as
%  \begin{equation}\label{eq:def-width-mon-dec}
%    \decwidth(S,\mlabelling) \defn \max_{v \in \vertices(S)} \nodeweight(\mlabelling(v))
%  \end{equation}
%\end{definition}

\begin{definition}[Width of a monoidal decomposition]\label{defn:decompositionWidth}
  %Let \(\cat{C}\) be a monoidal category and \(\decgenerators\) be a family of its morphisms.
  Let \(\nodeweight\) be a weight function for \((\cat{C},\decgenerators)\).
  Let $f$ be in $\cat{C}$ and $d\in \decset{f}$.
  The width of $d$ is defined recursively as follows:
  \begin{align*}\label{eq:def-width-mon-dec}
    \decwidth(d) \defn\ &  \nodeweight(f) & \text{if }d=\leafgenerator{f} \\
                        &  \max \{\decwidth(d_1),\decwidth(d_2)\} & \text{if }d= \nodegenerator{d_{1}}{\tensor}{d_{2}}\\
                        &  \max \{\decwidth(d_1),\,\nodeweight(X),\,\decwidth(d_2)\} & \text{if } d= \nodegenerator{d_{1}}{\dcomp_{X}}{d_{2}}
  \end{align*}
\end{definition}

%As sketched in~\Cref{ex:mon-dec},
Decompositions can also be described as labelled trees $(S,\mu)$ where $S$ is a tree  and $\mu:\vertices(S)\to \decgenerators \union \monoidaloperations{\cat{C}}$ is a labelling function. The cost can be written
\(\decwidth(d)=\decwidth(S,\mlabelling) \defn \max_{v \in \vertices(S)} \nodeweight(\mlabelling(v))\), which may be familiar to those aquainted with widths.

Monoidal width is simply the width of the cheapest decomposition.
\begin{definition}[Monoidal width]\label{defn:monoidalWidth}
  Let \(\nodeweight\) be a weight function for \((\cat{C},\decgenerators)\)
  and $f$ be in $\cat{C}$. Then the \emph{monoidal width} of $f$ is
  \( \mwd(f) \defn \min_{d\in \decset{f}} \decwidth(d)\).
\end{definition}

\begin{example}\label{ex:mwd-number-like-morphisms}
  With the data of~\Cref{ex:mon-dec}, define an family of morphisms \(h_{n} \colon 1 \to 1\) recursively by \(h_{0} \defn f \dcomp_{2} g\) and \(h_{n+1} \defn f \dcomp_{2} (h_{n} \tensor h_{n}) \dcomp_{2} g\).
  Each \(h_{n}\) has a decomposition of width \(2^{n}\) where the first node is the composition along the \(2^{n}\) wires in the middle.
  However, \(\mwd(h_{n})=2\) for any \(n\), with an optimal monoidal decomposition shown in \Cref{fig:ex-mon-dec}, right.
\end{example}
% \begin{example}
%   The process of exchanging the values of two pairs of boolean variables requires two registers per processor.
%   The following diagram represents this process as a morphism in the free prop generated by the CNOT gate and its 180 degrees rotation.
%   \begin{center}
%     \swapVariablesExFig
%   \end{center}
%   The monoidal width of this morphism gives the maximum number of registers needed in each processor to implement the process.
% \end{example}

\subsection{The width of copying}
%To give a more concrete understanding of monoidal decompositions, let us bound the monoidal width of the `copy' morphism.
%We are given a symmetric monoidal category \(\cat{C}\) with a morphism \(\cp_{X} \colon X \to X \tensor X\) for every object \(X\).
%This morphism need to behave well with the monoidal structure.

As a first taste of monoidal width, we study symmetric monoidal categories where some objects $X$ carry a ``copying'' operation, i.e. a morphism \(\cp_{X} \colon X \to X \tensor X\), compatible with the monoidal product.
We show that the copy morphism \(\cp_{X_{1} \tensor \dots \tensor X_{n}}\) on \(X_{1} \tensor \cdots \tensor X_{n}\) can be efficiently decomposed: its monoidal width is bounded by \((n+1) \cdot \max_{i = 1,\dots,n} \nodeweight(X_{i})\).

\begin{definition}[Copying]
  Let \(\cat{X}\) be a symmetric monoidal category with symmetries given by \(\swap{X,Y}\).
  We say that \(\cat{X}\) has \emph{coherent copying} if there is a class of objects $\mathcal{C}_\cat{X}\subseteq \obj{\cat{X}}$,
  satifying $X,Y\in \mathcal{C}_\cat{X}$ iff $X\tensor Y \in \mathcal{C}_{X}$, such that
  every \(X\) in \(\mathcal{C}_\cat{X}\)
   is endowed with a morphism \(\cp_{X} \colon X \to X \tensor X\).
   Moreover, \(\cp_{X \tensor Y} = (\cp_X \tensor \cp_Y) \dcomp (\id{X} \tensor \swap{X,Y} \tensor \id{Y})\) for every \(X,Y \in \mathcal{C}_\cat{X}\).
\end{definition}

An example is any cartesian prop\footnote{In a \emph{cartesian prop} the $\tensor$ satisfies the universal property of  products.% Dually, in a \emph{cocartesian prop}, the \(\tensor\) satisfies the universal property of the coproduct.
} with the canonical diagonal \(\cp_{X} \colon X \to X \times X\) given by the cartesian structure.
We take \(\cp_{X}\), the symmetries \(\swap{X,Y}\) and the identities \(\id{X}\) as atomic, i.e.\ the set of atomic morphisms is \(\decgenerators = \{\cp_{X},\, \swap{X,Y},\, \id{X} \ :\  X, Y \in \mathcal{C}_\cat{X}\}\).
The weight function is \(\nodeweight(\cp_{X}) \defn 2 \cdot \nodeweight(X)\), \(\nodeweight(\swap{X,Y}) \defn \nodeweight(X) + \nodeweight(Y)\) and \(\nodeweight(\id{X}) \defn \nodeweight(X)\).
Note that $\nodeweight(\cp_{X\tensor Y}) = 2 \cdot \nodeweight(X\tensor Y) =
2\cdot (\nodeweight(X) + \nodeweight(Y))$, but %utilising coherence
we can do better.
%, as illustrated below.
%Then, the idea to prove the bound is that we can recursively decompose \(\cp_{X_{1} \tensor \cdots \tensor X_{n}}\) in terms of \(\cp_{X_{1}}, \ldots, \cp_{X_{n}}\), as it is shown in~\Cref{ex:mwd-copy}.
\begin{example}\label{ex:mwd-copy}
  Let \(\cat{C}\) be a prop with coherent copying and consider \(\cp_{n} \colon n \to 2n\).
  % given by the cartesian structure, and \(\swap{m,n} \colon m+n \to n+m\) be the symmetry morphism on \(m\) and \(n\).
  Let \(\gamma_{n,m} \defn (\cp_{n} \tensor \id{m}) \dcomp (\id{n} \tensor \swap{n,m}) \colon n + m \to n + m + n\).
  We decompose \(\gamma_{n,m}\) (below left) in terms of \(\gamma_{n-1,m+1}\) (in the dashed box), \(\cp_{1}\) and \(\swap{1,1}\) by cutting along at most \(n+1+m\) wires.
%  \[g_{n,m} = (\id{n-1} \tensor ((\cp_{1} \tensor \id{1}) \dcomp (\id{1} \tensor \swap{1,1}))) \dcomp_{n+1+m} (g_{n-1,m+1} \tensor \id{1}).\]
  \begin{figure}[h!]
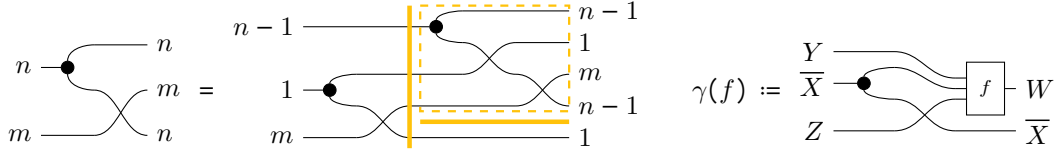

    \centering
    \mwdCopyExFig{} \(\quad \gamma(f) \,\defn\)\lemmamwdcopyStateFig{}
    \caption{Decomposing \(\gamma_{n,m}\) and its more general version \(\gamma(f)\).}\label{fig:mwd-copy}
  \end{figure}
  We decompose \(\cp_{n} = \gamma_{n,0}\) cutting along only \(n+1\) wires. This means that $\mwd(\delta_n) \leq n+1$.
\end{example}
The following result generalises the above and will be useful to prove the results of~\Cref{sec:mwd-branch-width}.
\begin{lemma}\label{lemma:mwd-copy}
  Let \(\cat{X}\) be a sym.\ mon.\ category with coherent copying.
  Suppose that \(\decgenerators\) contains \(\cp_X\) for \(X \in \mathcal{C}_{\cat{X}}\), and \(\swap{X,Y}\) and \(\id{X}\) for \(X \in \obj{\cat{X}}\). Let $\overline{X}\defn X_1 \tensor \cdots \tensor X_n$,
  \(f \colon Y \tensor \overline{X} \tensor Z \to W\), and \(d \in \decset{f}\).
  Let \(\gamma(f) \defn (\id{Y} \tensor \cp_{\overline{X}} \tensor \id{Z}) \dcomp (\id{Y \tensor \overline{X}} \tensor \swap{\overline{X}, Z}) \dcomp (f \tensor \id{\overline{X}})\), as in \Cref{fig:mwd-copy}.
  There is \(\copyMdec(d)\in \decset{g}\) s.t.\ \(\decwidth(\copyMdec(d)) \leq \max \{\decwidth(d), \nodeweight(Y) + \nodeweight(Z) + (n+1) \cdot \max_{i = 1,\ldots,n} \nodeweight(X_i)\}\).
\end{lemma}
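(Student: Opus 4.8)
The plan is to prove the statement by induction on $n$, the number of factors in $\overline{X} = X_1 \tensor \cdots \tensor X_n$, peeling off the copying of a single object at each step. This mirrors the recursion in \Cref{ex:mwd-copy}, where $\gamma_{n,m}$ is decomposed in terms of $\gamma_{n-1,m+1}$ by cutting along $n+m+1$ wires; here the $m$ ``pass-through'' wires are played by $Y$ and $Z$, and moving a copied object into this pass-through region is what keeps every cut within the target bound.

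For the inductive step, write $\overline{X}' \defn X_1 \tensor \cdots \tensor X_{n-1}$, so that $\overline{X} = \overline{X}' \tensor X_n$. First I would establish the key identity
\[
  \gamma(f) \;=\; \bigl(\id{Y \tensor \overline{X}'} \tensor \cp_{X_n} \tensor \id{Z}\bigr) \dcomp \bigl(\id{Y \tensor \overline{X}' \tensor X_n} \tensor \swap{X_n, Z}\bigr) \dcomp \bigl(\gamma'(f) \tensor \id{X_n}\bigr),
\]
where $\gamma'(f)$ denotes the instance of the $\gamma$-construction for the shorter list $X_1, \ldots, X_{n-1}$, keeping $Y$ but enlarging the right boundary to $Z' \defn X_n \tensor Z$; thus $\gamma'(f) \colon Y \tensor \overline{X}' \tensor X_n \tensor Z \to W \tensor \overline{X}'$. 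Verifying this identity is where the coherent copying axiom does its work: expanding $\cp_{\overline{X}} = \cp_{\overline{X}' \tensor X_n}$ via $\cp_{X \tensor Y} = (\cp_X \tensor \cp_Y)\dcomp(\id{X}\tensor\swap{X,Y}\tensor\id{Y})$ and then pushing the duplicated $X_n$ to the far right using naturality of the symmetries turns the left-hand side into the right-hand side.

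Given the identity, I would take $\copyMdec(d)$ to be the evident monoidal decomposition of the right-hand side: the first two factors are monoidal products of $\cp_{X_n}$, a symmetry and identities (all atomic), and the last factor is $\gamma'(f) \tensor \id{X_n}$, where the inductive hypothesis supplies a decomposition of $\gamma'(f)$ built from the same $d$. It then remains to bound the width. The only new sequential cuts exposed by the identity occur at $Y \tensor \overline{X}' \tensor X_n \tensor X_n \tensor Z$ and $Y \tensor \overline{X}' \tensor X_n \tensor Z \tensor X_n$, both of weight $\nodeweight(Y) + \nodeweight(Z) + \sum_{i} \nodeweight(X_i) + \nodeweight(X_n) \leq \nodeweight(Y) + \nodeweight(Z) + (n+1)\max_i \nodeweight(X_i)$, which is within the target. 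For the recursive factor, the inductive hypothesis bounds the width of the decomposition of $\gamma'(f)$ by $\max\{\decwidth(d),\, \nodeweight(Y) + \nodeweight(Z') + n \cdot \max_{i<n}\nodeweight(X_i)\}$, and since $\nodeweight(Z') = \nodeweight(X_n) + \nodeweight(Z)$ and $\nodeweight(X_n) + n\max_{i<n}\nodeweight(X_i) \leq (n+1)\max_i \nodeweight(X_i)$, this too stays within the stated bound. The base case $n=1$ is the same computation with $\overline{X}'$ empty and the recursive factor replaced by $d$ itself tensored with $\id{X_1}$.

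I expect the main obstacle to be the careful verification of the peeling identity and, relatedly, the choice of which copied object to peel and where to route its duplicate so that no intermediate boundary ever carries two full copies of $\overline{X}$: routing the new copy of $X_n$ immediately past $Z$ to the outermost output position, rather than accumulating all copies before applying $f$, is exactly what caps the extra weight at a single $\max_i \nodeweight(X_i)$ and yields the factor $(n+1)$ instead of $2n$. The weight inequalities themselves are then routine, but they hinge on this geometric choice being made correctly.
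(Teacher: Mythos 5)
Your proof is correct and takes essentially the same route as the paper's: the same induction on \(n\), the same key identity peeling off \(\cp_{X_n}\) first and routing the duplicate past \(Z\) to the outermost position, and the same application of the inductive hypothesis to the remaining \(\gamma'(f)\) with the right boundary enlarged to \(X_n \tensor Z\), with matching width bookkeeping. The only cosmetic difference is that the paper's base case is \(n=0\) (where \(\copyMdec(d) \defn d\)) rather than your \(n=1\).
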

\begin{proof}[Proof sketch]
  The proof is by induction on \(n\), the details can be found in~\Cref{app:copy}.
  The intuition for inductive step is the decomposition of~\Cref{ex:mwd-copy}.% and relies on decomposing \(g\) as below:
  % \begin{center}
  %   \lemmamwdcopyProofFigCuts{}
  % \end{center}
  %The induction hypothesis is applied to the morphism \(g'\) in the dashed area
  %% \begin{center}
  %%   \(g' = \) \lemmamwdcopyProofFigFour{}
  %% \end{center}
  %to obtain a decomposition \(\copyMdec'(d)\) of \(g'\) that satisfies the required inequality.
  %It is then easy to show that
  %\begin{align*}
  %   \decwidth(\copyMdec(d)) \defn \max & \{\nodeweight(\id{Y \tensor \overline{X}}), \nodeweight(\cp_{X_{n+1}}), \nodeweight(\id{Z}), \nodeweight(\id{X_{n+1}}), \nodeweight(\swap{X_{n+1},Z}), \\
  %  & \quad \decwidth(\copyMdec'(d)), \nodeweight(\dcompnode{Y \tensor \overline{X} \tensor Z \tensor X_{n+1}}), \nodeweight(\dcompnode{X_{n+1} \tensor Z \tensor X_{n+1}}) \}\\
  %  & \leq \max\{\nodeweight(\dcompnode{Y}) + \nodeweight(\dcompnode{Z}) + (n+2) \cdot \max_{i=1, \ldots, n+1} \nodeweight(\dcompnode{X_i}), \decwidth(d)\}
  %\end{align*}

\end{proof}
\subsection{Monoidal tree and path widths}
\begin{wrapfigure}{r}{0.25\textwidth}
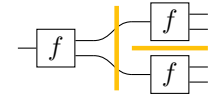

  \centering
  \monoidaltreedecExFig{}
  \caption{A monoidal tree decomposition.}\label{fig:mon-tree-dec}
\end{wrapfigure}
Monoidal width (\Cref{defn:monoidalWidth}) relies on the notion of decomposition (\Cref{defn:monoidalDecomposition}).
We shall consider two variants, obtained by restricting the set of allowable decompositions. First,
a monoidal \emph{tree} decomposition is a monoidal decomposition that has a ``tree'' shape.
Such monoidal decompositions arise by restricting sequential composition to atomic morphisms on one side, with recursion only allowed on the other.
For example, consider a prop with generator \(f \colon 1 \to 2\) and the decomposition of \(f \dcomp (f \tensor f)\) in \Cref{fig:mon-tree-dec}.
The decompositions in~\Cref{fig:ex-mon-dec} are not of this kind.
\begin{definition}%[Monoidal right tree decomposition]
Let \(\cat{C}\) be a monoidal category and \(\decgenerators\) be a set of morphisms.
The set $\decset{f}^{rt}$ of \emph{monoidal (right) tree decompositions} of \(f \colon A \to B\) in $\cat{C}$ is defined recursively:
\begin{align*}
\decset{f}^{rt} \quad \Coloneqq \quad & \leafgenerator{f} &\text{if } f \in \decgenerators \\
                    \mid \quad & \nodegenerator{d_{1}}{\tensor}{d_{2}} &\text{if } d_1\in \decset{f_1}^{rt},\,d_2\in \decset{f_2}^{rt} \text{ and } f =_\cat{C} f_1\tensor f_2 \\
                    \mid \quad & \nodegenerator{g}{\dcomp_{C}}{d_1} &\text{if }g\colon A\to C \in \decgenerators,\,d_1\in \decset{f_1 \colon C\to B}^{rt} \text{ and }f =_\cat{C} g \dcomp f_1
\end{align*}
\end{definition}
The set $\decset{f}^{lt}$ of monoidal left tree decompositions of $f$ is defined analogously, with recursion allowed on the left of `$\dcomp$'.
Since monoidal tree decompositions are examples of monoidal decompositions, we can calculate their width as in~\cref{defn:decompositionWidth}.

\begin{definition}[Monoidal tree width]\label{defn:monoidalTreeWidth}
  Let \(\nodeweight\) be a weight function for \((\cat{C},\decgenerators)\)
  and $f$ be in $\cat{C}$. Then the \emph{monoidal right tree width} and the  \emph{monoidal left tree width} of $f$ are
  \begin{align*}
    \mtwd^{r}(f) &\defn \min_{d\in \decset{f}^{rt}} \decwidth(d) &\text{and} && \mtwd^{l}(f) \defn \min_{d\in \decset{f}^{lt}} \decwidth(d).
  \end{align*}
\end{definition}
In certain categories, e.g. compact closed categories, we can translate between left and right monoidal tree decompositions, preserving width; in such settings we can drop the ``left'' and ``right'' adjectives and talk simply of monoidal tree width. This is the case for cospan categories, and we shall see that in the categories of cospans of graphs, monoidal tree width is closely related to tree width in the traditional sense.

%\subsection{Monoidal path width}
\medskip
\begin{wrapfigure}{r}{0.35\textwidth}
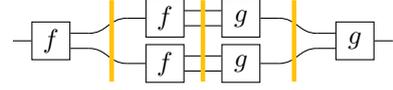

  \setlength{\belowcaptionskip}{-10pt}
  \centering
  \monoidalpathdecExFig{}
  \caption{A monoidal path decomposition.}\label{fig:mon-path-dec}
\end{wrapfigure}
A monoidal path decomposition is a monoidal decomposition that has a ``path'' shape. Conceptually it is very simple: while monoidal tree decompositions restrict the use of $\otimes$, monoidal path decompositions ban it outright.
For example, consider a prop with generators \(f \colon 1 \to 2\), \(g \colon 2 \to 1\). \Cref{fig:mon-path-dec} shows a monoidal path decomposition of \(f \dcomp (f \tensor f) \dcomp (g \tensor g) \dcomp g\).
Here, tensors have to be in the set $\decgenerators$ of atomic morphisms.

\begin{definition}%[Monoidal path decomposition]
Let \(\cat{C}\) be a monoidal category and \(\decgenerators\) be a set of morphisms.
The set $\decset{f}^{p}$ of \emph{monoidal path decompositions} of \(f \colon A \to B\) in $\cat{C}$ is defined recursively:
\begin{align*}
\decset{f}^{p} \quad \Coloneqq \quad & \leafgenerator{f} &\text{if } f \in \decgenerators \\
              \mid \quad & \nodegenerator{d_{1}}{\dcomp_{C}}{d_{2}} &\text{if }d_1\in \decset{f_1 \colon A\to C}^p,\,d_2\in \decset{f_2 \colon C\to B}^p \text{ and }f =_\cat{C} f_1\dcomp f_2
\end{align*}
\end{definition}
Since, as for monoidal tree width, we are restricting the set of allowed decompositions, the width of a monoidal path decomposition is inherited from~\cref{defn:decompositionWidth}. This leads, by the now familiar pattern, to monoidal path width.

\section{Graphs and their decompositions}\label{sec:graphs}
We recall the notions of \emph{tree}~\cite{robertson1986graph-minorsII}, \emph{path}~\cite{robertson1983graph-minorsI} and \emph{branch width}~\cite{robertson1991graph-minorsX} due to Robertson and Seymour.
We recall the original definitions, and provide a recursive account of decompositions. The latter allow us, in~\Cref{sec:mwd-graph-widths},
 to establish connections with monoidal width and its variants.

\subsection{Graphs and graphs with sources}

Robertson and Seymour work with finite undirected graphs.
% and that is the setting we shall focus on.
\begin{definition}[Graphs]
  A graph is a triple $G=\mathgraph[,\edgeendsfun]{E}{V}$ where \(V, E\) are finite sets and $\edgeendsfun \colon E \to \parti_2(V)$ sends edges to their endpoints; $\parti_2(V)$ denotes subsets of cardinality 1 or 2.
\end{definition}
% \begin{wrapfigure}{r}{0.2\textwidth}
%   \centering
%   \graphSourcesExFig{}
%   \caption{A graph with highlighted sources.}\label{fig:graph-with-sources}
% \end{wrapfigure}
We usually write simply \(G = \mathgraph{E}{V}\). % when there is no ambiguity for the function \(\edgeendsfun\).
Fixing $G$, a \emph{path} from $v_0$ to $v_k$ is a list of distinct edges \(e_{0},\ldots,e_{k-1}\) s.t.\  \(\edgeends{e_{i}} = \{v_{i},v_{i+1}\}\) for each \(0 \leq i < k\). % this is to avoid a path going back to the same edge
We write $v \graphpath v'$ if there exists a path from $v$ to $v'$. A graph is \emph{acyclic} if, for each path $v_0\dots v_k$, if $v_i=v_j$ then $i=j$.
A graph is \emph{connected} if for all $v\neq v'\in V$ we have $v \graphpath v'$.
A \emph{tree} is a connected acyclic graph.

For recursive definitions we follow Courcelle~\cite{bauderon1987graph} and recall graphs with sources.
Sources are marked vertices, thought of as an \emph{interface} that can be glued with that of another graph.

\begin{definition}[Graph with sources]
A graph with sources is a pair \(\Gamma = (G,X)\) where \(G = \mathgraph{E}{V}\) is a graph and \(X \subseteq V\) are the sources.
Given \(\Gamma = (G,X)\), \(\Gamma' = (G',X')\), we say that \(\Gamma'\) is a subgraph of \(\Gamma\) if \(G'\) is a subgraph of \(G\) (note that we \emph{do not} require \(X' \subseteq X\)).
\end{definition}

\subsection{Tree width}

Intuitively, \emph{tree width} measures \virgolette{how far} $G$ is from being a tree: a graph is a tree iff it has tree width 1. On the other hand, the family of cliques has unbounded tree width.

Tree width relies on \emph{tree decompositions}. For Robertson and Seymour~\cite{robertson1986graph-minorsII},
a decomposition is itself a tree $Y$, each vertex of which is associated with a subgraph of $G$.
%, subject to conditions ensuring that (i) the entire graph is recovered by combining the subgraphs and (ii) the tree decomposition respects the connection structure of the graph.
%
%This is done by decomposing a graph into some of its subgraphs that form a tree shape covering the graph. In this sense, we can think of tree width as the cost of decomposing a graph into a tree shape.

\begin{definition}[Tree decomposition]\label{def:treeDec}
  Let \(G = \mathgraph{E}{V}\) a graph.
  A tree decomposition of \(G\) is a pair \((Y,t)\) where \(Y\) is a tree and \(t \colon \vertices(Y) \to \parti(V)\) is a function such that:
  \begin{enumerate}
    \item Every vertex is in one of the components:
      \(\Union_{i \in \vertices(Y)} t(i) = V\).
    \item Every edge has its endpoints in a component:
      \(\forall e \in E \ \exists i \in \vertices(Y) \ \edgeends{e} \subseteq t(i)\).
    \item The components are glued in a tree shape:
      \(\forall i \graphpath j \graphpath k \in \vertices(Y) \ t(i) \intersection t(k) \subseteq t(j)\).
  \end{enumerate}
\end{definition}

\Cref{fig:tree-dec} is an illustration.
The components of the tree are \virgolette{glued along vertices} to obtain the graph.
The cost is the size of the biggest component; e.g.\ in~\Cref{fig:tree-dec} it is \(3\).

\begin{definition}[Tree width]\label{defn:treewidth}
  The width of a tree decomposition is \(\decwidth(Y,t) \defn \max_{i \in \vertices(Y)} \card{t(i)}\).
  The \emph{tree width} of \(G\) is given by the min-max formula:
  \(\treewidth(G) \defn \min_{(Y,t)} \decwidth(Y,t)\).
\end{definition}
Robertson and Seymour subtract \(1\) from $\treewidth(G)$ so that trees have tree width \(1\). To reduce bureaucratic overhead, we ignore this and work with~\Cref{defn:treewidth}.
Robertson and Seymour's decomposition trees are of a graph theoretic kind.
For us it is convenient to use an approach inspired by \(k\)-sourced graphs~\cite{arnborg1993algebraic}.
We consider decompositions as elements of a tree data type, with vertices carrying nonempty subsets $W$ of the vertices $V$ of $G$:
\(T_V \ \Coloneqq\ \emptydec  \ \mid\  (T_V, W, T_V)\).
The function $\labelling$ extracts the relevant subset from the root: $\labelling()\defn\varnothing$, $\labelling(T_1,W,T_2)\defn W$.
\begin{definition}[Recursive tree decomposition]\label{def:recursiveTreeDec}
  Fix a graph with sources \(\Gamma = (\mathgraph{E}{V},X)\).
  A tree decomposition is
  $T\in T_V$ where either $\Gamma$ is empty and $T=()$ or
  \(T = \nodegenerator{T_1}{V'}{T_2}\) and \(T_i\) are decompositions of subgraphs \(\Gamma_i = (\mathgraph{E_i}{V_i},X_i)\) of \(\Gamma\) s.t.:
    (i) \(X \subseteq V'\),
    (ii) \(V' \union V_1 \union V_2 = V\),
    (iii) \(X_i = V_i \intersection V'\),
    (iv) \(V_1 \intersection V_2 \subseteq V'\),
    (v) \(E_{1} \intersection E_{2} = \emptyset\), and
    (vi) \(\edgesetends{E \setminus (E_1 \disjointunion E_2)} \subseteq V'\).
\end{definition}
The conditions ensure that, by glueing \(\Gamma_{1}\) and \(\Gamma_{2}\) with \(\Gamma' \defn (\mathgraph{E\setminus(E_{1} \disjointunion E_{2})}{V'}, X_{1} \union X_{2})\), we get \(\Gamma\) back.
The width is defined:
  \(\decwidth() \defn 0\), and \(\decwidth(T_1,V',T_2) \defn \max\{\card{V'},\decwidth(T_1), \decwidth(T_2)\}\).
%\end{definition}

\medskip
Tree decompositions of~\Cref{defn:treewidth} and
recursive decompositions of~\Cref{def:recursiveTreeDec} can be translated one to the other while preserving width.
%
% the same tree width as the original definition of tree decomposition by exhibiting a mapping from tree decompositions to recursive tree decompositions that preserves the width and vice versa.
%We omit the details of these mappings.

\begin{lemma}[cf.\ Proposition~4.1 in~\cite{arnborg1993algebraic}]\label{lemma:equivRecursiveTreeDec}
  Let %\(G = \mathgraph{E}{V}\) be a graph and
  \(\Gamma = (G,X)\) be a graph with sources.
  \begin{enumerate}
  \item
  If \((Y,t)\) is a tree decomposition of \(G\) and \(X \subseteq t(r)\) for some \(r \in \vertices(Y)\), then there is a recursive tree decomposition \(\toRecursiveDec(Y,t)\) of \(\Gamma\) such that \(\decwidth(Y,t) = \decwidth(\toRecursiveDec(Y,t))\).
  \item
  If \(T\) is a recursive tree decomposition of \(\Gamma\), then there is a tree decomposition \(\fromRecursiveDec(T)\) of \(G\) such that \(X \subseteq \labelling(T)\) and \(\decwidth(T) = \decwidth(\fromRecursiveDec(T))\).
  \end{enumerate}
\end{lemma}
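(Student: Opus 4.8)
The plan is to give two width-preserving translations $\toRecursiveDec$ and $\fromRecursiveDec$ and to verify that each produces an object satisfying the defining conditions of its target notion. The unifying observation is that in both directions the \emph{labels} of a recursive decomposition are exactly the \emph{bags} of the corresponding classical one, so that $\decwidth$ is preserved on the nose: since $\decwidth(Y,t)=\max_i\card{t(i)}$ while $\decwidth(T)$ is the maximum of $\card{V'}$ over nodes, it suffices to arrange that the two families of subsets of $V$ coincide up to repetitions that never exceed the maximum. So the real content is checking the structural conditions, not the widths.

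For part (1) I would first root $Y$ at the distinguished vertex $r$ with $X\subseteq t(r)$. The one delicate preliminary is a canonical assignment of edges to nodes: for each $e$ let $b(e)$ be the node closest to $r$ whose bag contains $\edgeends{e}$. This is well defined because the nodes whose bag contains a given vertex form a connected subtree (property~3), hence so do those containing both endpoints of $e$, which is nonempty by property~2. Then, for the subtree $Y_i$ hanging below a node $i$ with parent $p$, I set $V_i\defn\Union_{j\in Y_i}t(j)$, $E_i\defn\{e:b(e)\in Y_i\}$ and sources $X_i\defn V_i\intersection t(p)$, and build $\toRecursiveDec(Y,t)$ by recursion down the rooted tree, taking $V'\defn t(i)$ at node $i$. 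Since the recursive node type is binary whereas $i$ may have several children, I combine the children's decompositions in a right-nested chain whose every internal label is the \emph{same} set $t(i)$, placing the edges with $b(e)=i$ as the leftover edges at the top of that chain. Conditions (v) and (vi) hold by the choice of $b$, (ii) by construction, (iii) by the definition of $X_i$, and crucially (i) $X_i\subseteq t(i)$ and (iv) $V_1\intersection V_2\subseteq V'$ follow from property~3 applied along the path through $i$ between $p$ (resp.\ between two distinct child subtrees) and a bag witnessing the shared vertex. As every label is some $t(j)$ and every $t(j)$ occurs, the widths agree.

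For part (2) I would take the underlying tree to be the shape of $T$ with the empty leaves $\emptydec$ deleted, and set $t(n)\defn V'$ at each remaining node. Coverage (property~1) follows by induction from (ii), and the edge condition (property~2) from (vi), since inductively each edge is the leftover edge of exactly one node, where both its endpoints lie in $V'$. The heart of the argument is the interpolation property~3, equivalently that for each vertex $v$ the set $\{n:v\in\labelling(n)\}$ is connected in $T$. I would prove this by induction on $T$: at a node $(T_1,V',T_2)$, condition (iv) forces any $v\in V_1\intersection V_2$ into $V'$, so $v$ lives on at most one side unless it lies in $V'$; and whenever $v\in V'\intersection V_i$ we have $v\in X_i$ by (iii), whence $v\in\labelling(T_i)$ by condition (i), i.e.\ $v$ already occurs in the \emph{root} bag of $T_i$. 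This is exactly what lets me glue the connected sets obtained on each side to the current node without gaps. Again $t(n)=V'$ gives $\decwidth(\fromRecursiveDec(T))=\decwidth(T)$, and $X\subseteq\labelling(T)$ is just condition (i) at the root.

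The main obstacle is the interpolation property in part (2): it is the only place where a global tree-shape constraint must be recovered from the purely local gluing conditions of Definition~\ref{def:recursiveTreeDec}, and it is precisely condition (i), that sources sit in the root label, that makes the induction go through. In part (1) the analogous subtlety is bureaucratic rather than conceptual, namely making the edge assignment $b(\cdot)$ canonical and encoding an arbitrary-arity node as a width-neutral binary chain; once these are in place, every clause reduces to property~2 or property~3.
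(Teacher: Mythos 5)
Your proposal is correct and takes essentially the same route as the paper's proof: bags become labels and vice versa, arbitrary-arity nodes are flattened into width-neutral binary chains, and the structural conditions are discharged via the edge-coverage and interpolation properties of tree decompositions, with your part (2) additionally spelling out the connectivity argument that the paper leaves implicit. The only variations are bookkeeping: the paper assigns edges greedily and chains the children's \emph{classical} decompositions (with connecting label $X_2 \subseteq t(r)$) inside an induction on the number of leaves, whereas you use a canonical closest-to-root edge assignment and chain at the recursive level with the repeated label $t(i)$.
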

\begin{proof}
  See~\Cref{app:graphs}.
\end{proof}

\subsection{Path width}
Similarly to the intuition behind tree width, path width measures \virgolette{how far} a graph is from being a path.
Graphs are decomposed into a list of subgraphs that form a path shape.

\begin{definition}[Path decomposition]\label{def:pathDec}
  Let \(G = \mathgraph{E}{V}\) be a graph.
  A path decomposition of \(G\) is given by a pair \((P,p)\) where \(P\) is a path and \(p \colon \vertices(P) \to \parti(V)\) is s.t.:
  \begin{enumerate}
    \item Every vertex is in one of the components:
      \(\Union_{i \in \vertices(P)} p(i) = V\).
    \item Every edge has its endpoints in a component:
      \(\forall e \in E \ \exists i \in \vertices(P) \ \edgeends{e} \subseteq p(i)\).
    \item The components are glued in a path shape:
      \(\forall i \graphpath j \graphpath k \in \vertices(P) \ p(i) \intersection p(k) \subseteq p(j)\).
  \end{enumerate}
\end{definition}
\begin{definition}[Path width]
  Given a path decomposition \((P,p)\) of \(G\), \(\decwidth(P,p) \defn \max_{i \in \vertices(P)} \card{p(i)}\).
  The \emph{path width} of \(G\) is given by the min-max formula:
  \(\pathwidth(G) \defn \min_{(P,p)} \decwidth(P,p)\).
\end{definition}

As for tree width, Robertson and Seymour subtract \(1\) from this number to get that paths have path width \(1\). Again, we ignore this convention to save on bureaucracy.

\begin{example}\label{ex:path-dec}
  % A path decomposition \((P,p)\) of a graph \(G\) is a path labelled with subgraphs of \(G\), which we call components. When glueing the components together, according to the shape of \(P\), the result needs to be \(G\).
 Components are \virgolette{glued along vertices} (in a path shape) to obtain the original graph. The cost of the decomposition below is \(3\),  the size of the biggest component.
  \begin{center}
    \pathDecExFig{}
  \end{center}
\end{example}
%Similarly to tree decompositions,
%the decomposition path \(P\) records some information about \(G\), but not all of it because the decomposition does not `see inside' the components of \(G\).
%So,
%the cost of a path decomposition can be thought of as the amount of extra information needed to recover the graph, which is bounded by the number of vertices of the biggest component.

To give a recursive description,
it is again useful to consider a data type of vertex set-labelled paths; in the following $W$ ranges over nonempty subsets of $V$:
\(P_V \ \Coloneqq\ \emptydec  \ \mid\  (W, P_V)\).
% width, if we take seriously the idea of decomposing a graph into subgraphs forming a path shape and we consider that a path can be defined as a vertex together with a path, then we get to the following definition of recursive path decomposition.

\begin{definition}[Recursive path decomposition]\label{def:recursivePathDec}
  Let \(\Gamma = (\mathgraph{E}{V},X)\) be a graph with sources.
  A path decomposition $P\in P_V$ of \(\Gamma\) is either $()$ if $\Gamma$ is empty or
  $(V_1,T')$ where $T'$ is a path decomposition of a subgraph \(\Gamma' = (\mathgraph{E'}{V'},X')\) of \(\Gamma\) and \(V_1 \subseteq V\) is a subset of the vertices of \(\Gamma\), such that:
    (i) \(X \subseteq V_1\),
    (ii) \(V_1 \union V' = V\),
    (iii) \(X' = V_1 \intersection V'\), and
    (iv) \(\edgesetends{E \setminus E'} \subseteq V_1\).
\end{definition}
These conditions ensure that, by glueing \(\Gamma'\) with \(\Gamma_1 \defn (\mathgraph{E \setminus E'}{V_{1}}, X \union X')\), we obtain \(\Gamma\).
%The definition of width of a recursive path decomposition follows the same idea as the original definition.
%\begin{definition}[Width of a recursive path decomposition]
  The width can be defined recursively: \(\decwidth() \defn 0\), and \(\decwidth(V_1,T') \defn \max\{\card{V_1},\decwidth(T')\}\).
%\end{definition}
%We show that the definition of recursive path decomposition yields the same path width as the original definition of path decomposition by exhibiting a mapping from path decompositions to recursive path decompositions that preserves the width and vice versa.
%We omit the details of these mappings.
We state a path version of \Cref{lemma:equivRecursiveTreeDec}: the two notions of decomposition can be used interchangeably.
\begin{lemma}\label{lemma:equivRecursivePathDec}
  Let %\(G = \mathgraph{E}{V}\) be a graph and
  \(\Gamma = (G,X)\) be a graph with sources.
 \begin{enumerate}
 \item If \((P,p) = (V_1,\ldots,V_r)\) is a path decomposition of \(G\) and \(X \subseteq V_1\), then there is a recursive path decomposition \(\toRecursiveDec(P,p)\) of \(\Gamma\) such that \(\decwidth(P,p) = \decwidth(\toRecursiveDec(P,p))\).
 \item  If $T$ is a recursive path decomposition of \(\Gamma\), then there is a path decomposition \(\fromRecursiveDec(T)\) of \(G\) such that $X \subseteq \labelling(T)$ and $\decwidth(T) = \decwidth(\fromRecursiveDec(T))$.
 \end{enumerate}
\end{lemma}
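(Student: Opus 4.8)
The plan is to prove both directions by induction, paralleling the tree-shaped case of Lemma~\ref{lemma:equivRecursiveTreeDec}, with the single branching step there replaced by a single ``peel one bag off the head'' step. The maps \(\toRecursiveDec\) and \(\fromRecursiveDec\) are mutually inverse recipes for removing, respectively prepending, the first bag of the path, and in each case the width identity is immediate from the recursive definition \(\decwidth(V_1,T')=\max\{\card{V_1},\decwidth(T')\}\) once the structural conditions are checked.

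For part (1), I would induct on the length \(r\) of \((P,p)=(V_1,\ldots,V_r)\). When \(r=0\) the graph is empty and \(\toRecursiveDec(P,p)\defn()\). Otherwise I peel off the head bag \(V_1\) and aim to read \((V_2,\ldots,V_r)\) as a path decomposition of a subgraph \(\Gamma'=(\mathgraph{E'}{V'},X')\). Concretely I set \(V'\defn V_2\union\cdots\union V_r\), take \(E'\defn\{e\in E:\edgeends{e}\subseteq V_i\text{ for some }i\geq 2\}\), and \(X'\defn V_1\intersection V'\). Condition (iv) of Definition~\ref{def:recursivePathDec}, \(\edgesetends{E\setminus E'}\subseteq V_1\), then holds because every edge sits in some bag and an edge outside \(E'\) is covered only by \(V_1\); conditions (ii), (iii) are the definitions of \(V'\) and \(X'\), and (i) is the hypothesis \(X\subseteq V_1\). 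The point that makes the induction run is that \(X'\subseteq V_2\), so the recursion on \(\Gamma'\) is legal: this inclusion \(V_1\intersection V'\subseteq V_2\) follows from the gluing condition of Definition~\ref{def:pathDec}, since any \(v\in V_1\intersection V_k\) with \(k\geq 2\) must also lie in \(V_2\). The inductive hypothesis then yields \(T'\) with \(\decwidth(T')=\decwidth(V_2,\ldots,V_r)\), and I set \(\toRecursiveDec(P,p)\defn(V_1,T')\), whose width is \(\max\{\card{V_1},\decwidth(T')\}=\max_i\card{V_i}=\decwidth(P,p)\).

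For part (2), I would induct on the structure of \(T\); the empty case maps to the empty path. For \(T=(V_1,T')\), where \(T'\) decomposes \(\Gamma'=(\mathgraph{E'}{V'},X')\), the inductive hypothesis supplies a path decomposition \((W_1,\ldots,W_s)=\fromRecursiveDec(T')\) of \(G'\) with \(X'\subseteq W_1\) and matching width, and I prepend \(V_1\) to get \(\fromRecursiveDec(T)\defn(V_1,W_1,\ldots,W_s)\). Vertex coverage is condition (ii); \(X\subseteq\labelling(T)=V_1\) is condition (i); edge coverage splits into \(E'\) (covered by the \(W_i\) by the hypothesis) and \(E\setminus E'\) (endpoints in \(V_1\) by condition (iv)). The width identity is again at hand.

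The step I expect to be the main obstacle is verifying the path-gluing axiom of Definition~\ref{def:pathDec} for the prepended bag in part (2): for indices straddling \(V_1\) I must show \(V_1\intersection W_c\subseteq W_b\) whenever \(1\leq b\leq c\). The trick is that \(W_c\subseteq V'\), hence \(V_1\intersection W_c\subseteq V_1\intersection V'=X'\subseteq W_1\), so \(V_1\intersection W_c\subseteq W_1\intersection W_c\), and the inductive gluing condition on the \(W\)-path gives \(W_1\intersection W_c\subseteq W_b\). The remaining index configurations are either entirely inside the \(W\)-path (hypothesis) or trivial, so this one containment is the crux; the bookkeeping \(V_1\intersection V'\subseteq V_2\) used in part (1) is its mirror image.
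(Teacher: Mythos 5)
Your proposal is correct and follows essentially the same route as the paper's proof: induction on the path length (resp.\ the structure of \(T\)), peeling off or prepending the head bag, with the width identity falling out of \(\decwidth(V_1,T')=\max\{\card{V_1},\decwidth(T')\}\). In fact you make explicit two points the paper leaves implicit---that \(X'=V_1\intersection V'\subseteq V_2\) legitimises the recursion in part (1), and the gluing check \(V_1\intersection W_c\subseteq V_1 \intersection V' = X'\subseteq W_1\) for the prepended bag in part (2)---and your slightly different choice of \(E'\) (edges contained in some bag \(V_i\), \(i\geq 2\)) coincides with the paper's \(\{e : \edgeends{e}\subseteq V'\}\) by the gluing axiom, so both work.
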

\begin{proof}
  See~\Cref{app:graphs}.
\end{proof}

\subsection{Branch width}

While related to tree width\footnote{In fact \(\max\{\branchwidth(G),2\} \leq \treewidth(G) \leq \max\{\frac{3}{2} \branchwidth(G),2\}\)~\cite{robertson1991graph-minorsX}.}, branch width is different in spirit to tree and path widths.
A branch decomposition splits graphs into one-edge subgraphs. Given a graph $\mathgraph{E}{V}$ and $v\in V$, the \emph{neighbours} set of $v$ is $\{\, w \suchthat \exists e \in E \ \edgeends{e} = \{v,w\}\,\}$. A \emph{subcubic} tree~\cite{robertson1991graph-minorsX} is a tree s.t.\ each vertex has at most three neighbours. Vertices with one neighbour are \emph{leaves}.

\begin{definition}[Branch decomposition]\label{def:branchDec}
  Let \(G = \mathgraph{E}{V}\) be a graph.
  A branch decomposition \((Y,b)\) of \(G\) has \(Y\) a subcubic tree and \(b \colon \leaves(Y) \cong E\) is a bijection. % from the leaves of \(Y\) to the edges of \(G\).
\end{definition}

\begin{example}\label{ex:branch-dec}
  %A branch decomposition \((Y,b)\) of a graph \(G\) is a subcubic tree whose leaves are labelled with edges of \(G\).
  If we choose an edge of \(Y\) to be the \virgolette{root} of the tree, we can extend the labelling to internal vertices by labelling them with the glueing of the labels of their children.
  \begin{center}
    \branchDecExFig{}
  \end{center}
\end{example}
Each edge \(e\) in the tree \(Y\) determines a splitting of the graph.
More precisely, it determines a partition of the leaves of \(Y\), which, through \(b\), determines a partition \(\{A_e,B_e\}\) of the edges of \(G\).
This splits the graph \(G\) into subgraphs \(G_{1}\) and \(G_{2}\).
Intuitively, the order of an edge \(e\) is the number of vertices that are glued together when joining \(G_{1}\) and \(G_{2}\) to get \(G\).
Given the partition \(\{A_e,B_e\}\) of the edges of \(G\), we say that a vertex \(v\) of \(G\) \emph{separates} \(A_e\) and \(B_e\) whenever there are an edge in \(A_e\) and an edge in \(B_e\) that are both adjacent to \(v\).
In~\Cref{ex:branch-dec}, there is one vertex separating the subgraphs of the partition indicated by the arrow: the edge  marked by the arrow has order \(1\).
\begin{definition}[Order of an edge]
  Let \((Y,b)\) be a branch decomposition and \(e\) be an edge of \(Y\).
  The order of \(e\) is
  %the number of vertices that separate \(A_e\) and \(B_e\):
  \(\edgeorder(e) \defn \card{\edgesetends{A_e} \intersection \edgesetends{B_e}}\).
\end{definition}
The width of a branch decomposition is the maximum order of the edges of the tree.
\begin{definition}[Branch width]
  The width of a branch decomposition \((Y,b)\) is \(\decwidth(Y,b) \defn \max_{e \in \edges(Y)} \edgeorder(e)\).
  The \emph{branch width} of \(G\) is: % given by the min-max formula:
  \(\branchwidth(G) \defn \min_{(Y,b)} \decwidth(Y,b)\).
\end{definition}

The recursive definition of branch decomposition is a binary tree whose vertices carry subgraphs \(\Gamma'\) of the ambient graph \(\Gamma\).
We define this set as follows, where \(\Gamma'\) ranges over the non-empty subgraphs of \(\Gamma\):
\(T_{\Gamma} \ \Coloneqq \ \emptydec \ \mid \ (T_{\Gamma}, \Gamma', T_{\Gamma})\).

\begin{definition}[Recursive branch decomposition]
  Let \(\Gamma = (\mathgraph{E}{V},X)\) be a graph with sources.
  A recursive branch decomposition of \(\Gamma\) is \(T \in T_{\Gamma}\) where either:
  \(\Gamma\) is discrete, i.e. it has no edges, and \(T = \emptydec\);
  or \(\Gamma\) has one edge and \(T = \leafgenerator{\Gamma}\);
  or \(T = \nodegenerator{T_1}{\Gamma}{T_2}\) and \(T_i \in T_{\Gamma_{i}}\) are recursive branch decompositions of subgraphs \(\Gamma_i = (\mathgraph{E_i}{V_i},X_i)\) of \(\Gamma\) such that:
    (i) \(E = E_1 \disjointunion E_2\),
    (ii) \(V = V_1 \union V_{2}\), and
    (iii) \(X_i = (V_1 \intersection V_2) \union (X \intersection V_i)\).
\end{definition}
The conditions on the subgraphs ensure that, by glueing \(\Gamma_1\) and \(\Gamma_2\) together, we get \(\Gamma\) back.
Note that \(\edgeends{E_i} \subseteq V_i\).
We will sometimes write \(\Gamma_i = \labelling(T_i)\), \(V_i = \vertices(\Gamma_i)\) and \(X_i = \boundary(\Gamma_i)\).
Then, \(\boundary(\Gamma_i) = (\vertices(\Gamma_1) \intersection \vertices(\Gamma_2)) \union (\boundary(\Gamma) \intersection \vertices(\Gamma_i)) \).

\begin{definition}%[Width of a recursive branch decomposition]
  Let \(T = \nodegenerator{T_1}{\Gamma}{T_2}\) be a recursive branch decomposition of \(\Gamma = (G,X)\), with \(T_i\) possibly both empty.
  Define the width of \(T\) recursively: \(\decwidth\emptydec \defn 0\), and \(\decwidth(T) \defn \max\{\decwidth(T_1), \decwidth(T_2),\card{\boundary(\Gamma)}\}\).
  Expanding this, we obtain
  \(\decwidth(T) = \max_{T' \subtreeq T} \card{\boundary(\labelling(T'))}\).
\end{definition}
Also in this case, we show that this definition is equivalent to the original one by exhibiting a width preserving mapping from branch decompositions to recursive branch decompositions. % that preserve the . % and vice versa.
%This time, though,
Showing that these mappings preserve width is more involved because the order of the edges in a decomposition is defined \virgolette{globally}, while, for a recursive decomposition, the width is defined recursively.
Thus, we first need to show that we can compute recursive width globally.
\begin{lemma}\label{lemma:rec-bwd-globally}
  Let \(\Gamma = (G,X)\) be a graph with sources and \(T\) be a recursive branch decomposition of  \(\Gamma\).
  Let \(T_{0}\) be a subtree of \(T\) and let \(T'\ngtrless T_{0}\) denote a subtree \(T'\) of \(T\) such that its intersection with \(T_{0}\) is empty.
  Then
  \(\boundary(\labelling(T_{0})) = \vertices(\labelling(T_{0})) \intersection (X \union \Union_{T' \ngtrless T_{0}} \vertices(\labelling(T'))).\)
\end{lemma}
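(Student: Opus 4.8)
The plan is to prove the equality by structural induction on the recursive branch decomposition $T$, with the subtree $T_0 \subtreeq T$ quantified inside the induction. Write $\Gamma_0 = \labelling(T_0)$ and $V_0 = \vertices(\Gamma_0)$, and abbreviate the right-hand side as $R(T,T_0) \defn V_0 \intersection (X \union \Union_{T' \ngtrless T_0} \vertices(\labelling(T')))$, so the goal is $\boundary(\Gamma_0) = R(T,T_0)$. First I would record two structural facts that drive everything. (a) Vertex sets are nested along the tree: if $T' \subtreeq T''$ then $\vertices(\labelling(T')) \subseteq \vertices(\labelling(T''))$; this is immediate from condition (ii), $V = V_1 \union V_2$, applied along the path from $T''$ down to $T'$. (b) A root absorbs everything below it: $\Union_{T' \subtreeq T_i} \vertices(\labelling(T')) = \vertices(\Gamma_i)$, a direct consequence of (a).

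The base case and the ``root'' case of the induction coincide: whenever $T_0 = T$, no nonempty subtree is disjoint from $T_0$, so the union vanishes and $R(T,T) = \vertices(\Gamma) \intersection X = X$, using $X \subseteq V$. This matches $\boundary(\labelling(T)) = \boundary(\Gamma) = X$, and in particular settles the two leaf shapes $T = \emptydec$ and $T = \leafgenerator{\Gamma}$, whose only nonempty subtree is $T$ itself.

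For the inductive step take $T = \nodegenerator{T_1}{\Gamma}{T_2}$ and, by symmetry, assume $T_0 \subtreeq T_1$ (the case $T_0 = T$ is already done). Every rooted subtree of $T$ is $T$, or lies in $T_1$, or lies in $T_2$; among these $T$ is not disjoint from $T_0$, every subtree of $T_2$ is disjoint from $T_0$, and a subtree of $T_1$ is disjoint from $T_0$ iff it is so within $T_1$. Using fact (b) on the $T_2$ side, this partitions the union as $\Union_{T' \ngtrless T_0} \vertices(\labelling(T')) = U \union \vertices(\Gamma_2)$, where $U \defn \Union_{T' \subtreeq T_1,\, T' \ngtrless T_0} \vertices(\labelling(T'))$. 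The induction hypothesis for the decomposition $T_1$ of $\Gamma_1$ then gives $\boundary(\Gamma_0) = V_0 \intersection (\boundary(\Gamma_1) \union U)$, where by condition (iii) $\boundary(\Gamma_1) = (\vertices(\Gamma_1) \intersection \vertices(\Gamma_2)) \union (X \intersection \vertices(\Gamma_1))$.

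It remains to compare this with $R(T,T_0) = V_0 \intersection (X \union \vertices(\Gamma_2) \union U)$. Here I would use fact (a) in the form $V_0 \subseteq \vertices(\Gamma_1)$ (valid since $T_0 \subtreeq T_1$): intersecting with $V_0$ collapses $V_0 \intersection \vertices(\Gamma_1) \intersection \vertices(\Gamma_2)$ to $V_0 \intersection \vertices(\Gamma_2)$ and $V_0 \intersection X \intersection \vertices(\Gamma_1)$ to $V_0 \intersection X$, so the induction hypothesis becomes $\boundary(\Gamma_0) = V_0 \intersection (\vertices(\Gamma_2) \union X \union U)$, which is exactly $R(T,T_0)$. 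The main obstacle is this bookkeeping: correctly partitioning the disjoint subtrees across $T_1$ and $T_2$, and checking that the child's \emph{local} source set $\boundary(\Gamma_1)$, once cut down to the smaller vertex set $V_0$, reproduces precisely the \emph{global} ingredients $X$ and $\vertices(\Gamma_2)$. Once facts (a) and (b) are established, both the partition and the final set-algebra are routine.
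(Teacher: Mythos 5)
Your proof is correct and takes essentially the same route as the paper's: induction on \(T\) with the case split \(T_0 = T\) versus \(T_0 \subtreeq T_i\), applying the induction hypothesis to the child \(T_1\) with its local sources \(\boundary(\labelling(T_1))\), expanding that boundary via condition (iii), using \(V_0 \subseteq \vertices(\labelling(T_1))\) to collapse the intersection, and re-assembling the union of disjoint subtrees across \(T_1\) and \(T_2\); your facts (a) and (b) are exactly what the paper uses silently in its last two displayed steps. The only quibble is cosmetic: for \(T = \emptydec\) the label is the empty graph, so \(\boundary(\labelling(T))\) is \(\emptyset\) rather than \(X\), making that case trivially true rather than an instance of the root case—but this does not affect the argument.
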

\begin{proof}
  See~\Cref{app:graphs}.
\end{proof}
We define two mappings between branch decompositions and recursive branch decompositions.

\begin{lemma}\label{lemma:rec-branch-width-lower}
  Let %\(G = \mathgraph{E}{V}\) be a graph and
  \(\Gamma = (G,X)\) be a graph with sources.
  Let \(T\) be a recursive branch decomposition of \(\Gamma\).
  There is a branch decomposition \(\fromRecursiveDec(T)\) of \(G\) s.t.\  \(\decwidth(\fromRecursiveDec(T)) \leq \decwidth(T)\).
\end{lemma}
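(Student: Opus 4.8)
The plan is to define $\fromRecursiveDec$ by structural recursion on $T$, producing a genuine branch decomposition of $G$, and then to bound every edge order of the result by a boundary occurring in $T$, invoking \Cref{lemma:rec-bwd-globally}. First I would build the tree. On a leaf $T = \leafgenerator{\Gamma}$, where $\Gamma$ carries a single edge, $\fromRecursiveDec(T)$ is a one-node tree whose leaf $b$ maps to that edge. On $T = \nodegenerator{T_1}{\Gamma}{T_2}$ with both $T_i$ non-empty, I create a fresh node and attach $\fromRecursiveDec(T_1)$ and $\fromRecursiveDec(T_2)$ to it; when exactly one $T_i$ is empty I contract, returning the non-empty child (the empty side carries no edges), and when both are empty $\Gamma$ is edgeless and I return $\emptydec$. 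Since each node of $T$ has at most two non-empty children and one parent, every internal node of the output has at most three neighbours, so the tree is subcubic; and since $E$ is the disjoint union of the single edges sitting at the non-empty leaves of $T$, these leaves biject with $E$, giving $b \colon \leaves(\fromRecursiveDec(T)) \cong E$. Hence $\fromRecursiveDec(T)$ is a branch decomposition of $G$.

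Next I would set up the edge--subtree correspondence. Each edge $\varepsilon$ of $\fromRecursiveDec(T)$ is the parent-edge of the image of some subtree $T_0 \subtreeq T$ with $T_0 \neq T$; deleting $\varepsilon$ separates the leaves of that image, which are exactly the edges $E_0 \defn \edges(\labelling(T_0))$, from the remaining leaves. Thus the induced bipartition is $\{A_\varepsilon, B_\varepsilon\} = \{E_0,\, E \setminus E_0\}$. Because contraction only removes tree edges, this assignment is injective into the subtrees of $T$; I do not need it to be surjective, since it is enough to bound each individual $\edgeorder(\varepsilon)$ by a single boundary of $T$.

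Finally comes the key estimate. As endpoints of edges stay inside their vertex sets, $\edgesetends{E_0} \subseteq \vertices(\labelling(T_0))$, and every edge of $E \setminus E_0$ lives at a leaf disjoint from $T_0$, so $\edgesetends{E \setminus E_0} \subseteq \Union_{T' \ngtrless T_0} \vertices(\labelling(T'))$. \Cref{lemma:rec-bwd-globally} gives $\boundary(\labelling(T_0)) = \vertices(\labelling(T_0)) \intersection (X \union \Union_{T' \ngtrless T_0} \vertices(\labelling(T')))$, so intersecting the two inclusions yields $\edgesetends{A_\varepsilon} \intersection \edgesetends{B_\varepsilon} \subseteq \boundary(\labelling(T_0))$, whence $\edgeorder(\varepsilon) \leq \card{\boundary(\labelling(T_0))} \leq \decwidth(T)$. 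Taking the maximum over all edges $\varepsilon$ then delivers $\decwidth(\fromRecursiveDec(T)) \leq \decwidth(T)$.

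I expect the main obstacle to be bookkeeping rather than mathematics: verifying that contracting empty subtrees still leaves a subcubic tree with the correct leaf-to-edge bijection, and that the deletion of $\varepsilon$ really induces the bipartition $\{E_0, E \setminus E_0\}$. Once \Cref{lemma:rec-bwd-globally} is in hand the width inequality is immediate, and it is worth noting that the sources $X$ only enlarge $\boundary(\labelling(T_0))$, so they never obstruct the bound.
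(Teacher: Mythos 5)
Your proof is correct and follows essentially the same route as the paper's: take the tree underlying \(T\) with the leaf-to-edge bijection, identify the bipartition induced by each tree edge with \(\{\edges(\labelling(T_0)),\ E \setminus \edges(\labelling(T_0))\}\) for the corresponding subtree \(T_0\), and bound each edge order by \(\card{\boundary(\labelling(T_0))}\) via \Cref{lemma:rec-bwd-globally}, exactly as the paper does. The only difference is that you explicitly contract empty subtrees \(\emptydec\) to keep the tree subcubic and the leaf-edge bijection exact --- a bookkeeping point the paper's proof glosses over by passing directly to the underlying tree.
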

\begin{proof}
  See~\Cref{app:graphs}.
\end{proof}

\begin{lemma}\label{lemma:rec-branch-width-upper}
  Let %\(G = \mathgraph{E}{V}\) be a graph and
  \(\Gamma = (G,X)\) be a graph with sources.
  Let \((Y,b)\) be a branch decomposition of \(G\).
  Then, there is a branch decomposition \(\toRecursiveDec(Y,b)\) of \(\Gamma\) such that \(\decwidth(\toRecursiveDec(Y,b)) \leq \decwidth(Y,b) + \card{X}\).
\end{lemma}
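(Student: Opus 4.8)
The plan is to root the subcubic tree $Y$ and read off a recursive branch decomposition of the same underlying shape, carrying \emph{all} of $X$ at every node so that vertex coverage is automatic and the overhead is controlled by $\card{X}$. Concretely, I would pick an edge $e_0$ of $Y$ and orient $Y$ away from it: every internal vertex of $Y$ (degree at most three) then acquires one parent and at most two children, so the rooted tree becomes binary with its leaves in bijection, via $b$, with the edges of $G$. I add a new root $\rho$ above the two endpoints of $e_0$, combining the two sides of the $e_0$-partition. For each node $v$ of the resulting tree, write $A_v \subseteq E$ for the edges sitting at the leaves below $v$ and $B_v \defn E \setminus A_v$; then $v$ corresponds to the $Y$-edge $\epsilon_v$ joining $v$ to its parent, with $A_v$ the partition side away from the root.

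For the labelling I would assign to each node $v$ the subgraph $\Gamma_v$ on edge set $A_v$ with vertex set $\edgesetends{A_v} \union X$, and set $X_v \defn \boundary(\Gamma_v)$ to be the boundary forced by the conditions. The main verification is that this is a genuine recursive branch decomposition of $\Gamma$: the partition condition $E = E_1 \disjointunion E_2$ holds by construction at each split, condition $V = V_1 \union V_2$ holds because both sides carry $X$ and together their endpoints cover $\edgesetends{E}$ — any remaining isolated non-source vertex can be dumped into one chosen leaf without affecting anything — and the condition $X_i = (V_1 \intersection V_2) \union (X \intersection V_i)$ is exactly the definition of $X_v$. Degenerate cases ($E = \varnothing$, a single edge, or $Y$ a single leaf) are handled directly by the $\emptydec$ and $\leafgenerator{\cdot}$ clauses.

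The width estimate is where~\Cref{lemma:rec-bwd-globally} does the work, letting me replace the recursively defined width by the global formula $\boundary(\labelling(T_v)) = \vertices(\labelling(T_v)) \intersection (X \union \Union_{T' \ngtrless T_v} \vertices(\labelling(T')))$. Since every single-edge leaf outside $T_v$ is a subtree disjoint from $T_v$, the union over disjoint subtrees evaluates to $\edgesetends{B_v} \union X$, while $\vertices(\labelling(T_v)) = \edgesetends{A_v} \union X$; substituting and distributing gives
\[
  \boundary(\Gamma_v) = (\edgesetends{A_v} \union X) \intersection (\edgesetends{B_v} \union X) = (\edgesetends{A_v} \intersection \edgesetends{B_v}) \union X.
\]
Hence $\card{\boundary(\Gamma_v)} \leq \edgeorder(\epsilon_v) + \card{X}$, since $\edgeorder(\epsilon_v) = \card{\edgesetends{A_v} \intersection \edgesetends{B_v}}$ by definition. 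Taking the maximum over all nodes — each non-root node bounded by the order of its associated $Y$-edge, and the root $\rho$ contributing boundary exactly $X$ — yields $\decwidth(\toRecursiveDec(Y,b)) \leq \decwidth(Y,b) + \card{X}$.

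The main obstacle I anticipate is not the final inequality but the bookkeeping that makes it available: setting up the rooting so the node-to-$Y$-edge correspondence is clean, arguing that carrying $X$ everywhere keeps the structure a valid recursive decomposition while covering all of $V$, and, most importantly, invoking~\Cref{lemma:rec-bwd-globally} to convert the recursive boundary at $v$ into the global intersection formula — without it the recursive boundary cannot be directly compared to $\edgeorder(\epsilon_v)$. The placement of isolated and source vertices is the delicate point, but isolated vertices contribute to no intersection and the extra copies of $X$ cost at most $\card{X}$, so neither disturbs the bound.
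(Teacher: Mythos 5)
Your proof is correct and follows essentially the same route as the paper: both translate \((Y,b)\) into a recursive branch decomposition mirroring the shape of \(Y\) rooted at an edge, and both obtain the width bound by invoking \Cref{lemma:rec-bwd-globally} to rewrite each node's recursive boundary as a global intersection contained in \((\edgesetends{A_e} \intersection \edgesetends{B_e}) \union X\), i.e.\ of size at most \(\edgeorder(e) + \card{X}\). The only divergence is bookkeeping: you carry \(X\) (and the dumped isolated vertices) explicitly in every label and define the tree globally, whereas the paper builds the same tree by induction on \(\card{\edges(Y)}\) with leaner labels, but the width computation is the identical global argument in both.
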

\begin{proof}
  See~\Cref{app:graphs}.
\end{proof}

\section{Monoidal widths meet graph widths}\label{sec:mwd-graph-widths}
This section contains our main results.
We instantiate monoidal widths in a suitable monoidal category, where arrows are graphs with sources and categorical composition is glueing of graphs. We show that these are, up to constant factors, the graph widths recalled in~\Cref{sec:graphs}.

\subsection{The category \(\cospanGraphO\)}\label{sec:cospan-graph}

We work with the category \(\UGraph\) of undirected graphs and their homomorphisms (\Cref{def:category-graphs}).
The monoidal category \(\cospanGraph\) of cospans is a standard choice for an algebra of \virgolette{open} graphs. Graphs are composed by glueing vertices~\cite{rosebrugh2005cospangraph,gadducci1997inductive,fong2015cospans}.
We do not need the full expressivity of \(\cospanGraph\) and restrict to sets (discrete graphs) as objects.

\begin{definition}
  The category \(\cospanGraphO\) is the full subcategory of \(\cospanGraph\) on discrete graphs.
  Objects are sets and a morphism \(g \colon X \to Y\) is given by a graph \(G = \mathgraph{E}{V}\) and two functions, \(\partial_{X} \colon X \to V\) and \(\partial_{Y} \colon Y \to V\).
\end{definition}

The category \(\cospanGraphO\) has a convenient syntax given by a Frobenius monoid together with an \virgolette{edge} generator \(\oneedge \colon \oneelementset \to \oneelementset\)~\cite{2021feedbackspangraph}.
The Frobenius structure on the objects allows us to copy sources, discard or swap them. The monoid structure arises by taking \(\coproductmap{\id{}}{\id{}}\) and \(\initmap{}\) as the left component of the cospan.
The edge generator is \(\oneedge \colon \oneelementset \to \oneelementset\).

  \begin{align*}
    \cp_{X} & \defn \cospan{X}{\id{}}{\mathgraph{\emptyset}{X}}{\coproductmap{\id{}}{\id{}}}{X \disjointunion X} && \comonoidFig & \delete_{X} & \defn \cospan{X}{\id{}}{\mathgraph{\emptyset}{X}}{\initmap{}}{\emptyset} && \counitFig\\
    \swap{X,Y} & \defn \cospan{X \disjointunion Y}{\swap{}}{\mathgraph{\emptyset}{Y \disjointunion X}}{\id{}}{Y \disjointunion X} && \swapFig & \oneedge & \defn \cospan{\oneelementset}{u}{ \mathgraph{e}{\{u,v\}}}{v}{\oneelementset} && \edgegeneratorFig
  \end{align*}
  where \(\edgeends{e} = \{u,v\}\).
Composition in \(\cospanGraphO\) is given by identification of the common sources: if two vertices are pointed by a common source, then they are identified. For example, the composition \(\oneedge \dcomp \oneedge\) is the path of length two, obtained by identifying the vertex \(v\) of the first morphism with the vertex \(u\) of the second.
% \begin{example}
%  The composition \(\oneedge \dcomp \oneedge\) is the path of length two, obtained by identifying the vertex \(v\) of the first morphism with the vertex \(u\) of the second.
 % \begin{center}
 %   \composeCospanExFig{}
 % \end{center}
% \end{example}
%
%The monoidal product in \(\cospanGraphO\) corresponds to considering the two graphs with sources side by side and it is given by the componentwise coproduct.
%
%\begin{example}
%  The monoidal product \(\oneedge \tensor \oneedge \colon \{1,2\} \to \{1,2\}\) has two disjoint edges \(e_{1}\) and \(e_{2}\) with \(\edgeends{e_{i}} = \{u_{i}, v_{i}\}\).
%  The boundary functions are the coproduct of the corresponding boundary functions of \(\oneedge\) and \(\oneedge\): \(\partial_{0}(i) = u_{i}\) and \(\partial_{1}(i) = v_{i}\).
%  \begin{center}
%    \tensorCospanExFig
%  \end{center}
%\end{example}
In order to instantiate monoidal widths in \(\cospanGraphO\), we need to define an appropriate weight function.
\begin{definition}%[Weight function in \(\cospanGraphO\)]
  Let \(\decgenerators\) be all morphisms of \(\cospanGraphO\).
  Define the \emph{weight function} as follows.
  For an object \(X\), \(\nodeweight(\dcompnode{X}) \defn \card{X}\).
  For a morphism \(g \in \decgenerators\), \(\nodeweight(g) \defn \card{V}\), where \(V\) is the set of vertices of the apex of \(g\), i.e. \(g = \cospan{X}{}{G}{}{Y}\) and \(G = \mathgraph{E}{V}\).
\end{definition}
When composing two cospans of graphs, \(g_{1}\) and \(g_{2}\), the apices of \(g_{1}\) and \(g_{2}\) are not necessarily subgraphs of the apex of \(g_{1} \dcomp g_{2}\).
However, we have the following.
\begin{lemma}\label{lemma:episFromComposition}
  Suppose \(g = g_{1} \dcomp g_{2}\) in \(\cospanGraphO\), with \(g = \cospan{X}{}{G}{}{Z}\), \(g_{1}= \cospan{X}{}{G_{1}}{\partial_{1}}{Y}\), \(g_{2} = \cospan{Y}{\partial_{2}}{G_{2}}{}{Z}\).
  Then there are subgraphs \(G'_{1}\) and \(G'_{2}\) of \(G\) and surjections \(\alpha_{i} \colon G_{i} \to G'_{i}\) s.t., if \(\alpha_{iV}(v) = \alpha_{iV}(w)\), then \(v,w \in \image(\partial_{i})\).
\end{lemma}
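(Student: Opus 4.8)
The plan is to unfold the definition of composition in $\cospanGraphO$ as a pushout and then read off the two surjections from the universal quotient map. The composite $g_1 \dcomp g_2$ is computed as the pushout of the span $G_1 \xleftarrow{\partial_1} Y \xrightarrow{\partial_2} G_2$ along the shared boundary $Y$. Since objects of $\cospanGraphO$ are discrete, $Y$ has no edges, so the pushout is computed on vertices and edges separately: the edge set of the apex $G$ is $E = E_1 \disjointunion E_2$, while its vertex set is the quotient $V = (V_1 \disjointunion V_2)/{\sim}$, where $\sim$ is the least equivalence relation generated by the pairs $\partial_1(y) \sim \partial_2(y)$ for $y \in Y$. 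Write $q \colon V_1 \disjointunion V_2 \twoheadrightarrow V$ for the quotient map and $q_i \colon V_i \to V$ for its restriction to each summand.

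Next I would define the two subgraphs and the two homomorphisms. Let $G'_i$ be the subgraph of $G$ whose vertices are $q_i(V_i) \subseteq V$ and whose edges are the copy of $E_i$ inside $E = E_1 \disjointunion E_2$; this is a genuine subgraph because no edges are identified and the endpoint assignment is preserved by $q$, so every edge of $E_i$ has its endpoints in $q_i(V_i)$. Let $\alpha_i \colon G_i \to G'_i$ act as $q_i$ on vertices and as the inclusion $E_i \hookrightarrow E$ on edges. This is a graph homomorphism (it sends endpoints to endpoints because $q$ does), and it is surjective onto $G'_i$ by construction of $G'_i$ as the image of $G_i$.

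It remains to establish the key property, and this is where the real content lies: if $v \neq w$ in $V_i$ but $\alpha_{iV}(v) = \alpha_{iV}(w)$, then $v, w \in \image(\partial_i)$. The argument is a zig-zag analysis of $\sim$. By the standard description of a generated equivalence relation, $q(v) = q(w)$ yields a chain $v = u_0 \sim u_1 \sim \cdots \sim u_m = w$ in which each elementary step is one of the generators $\partial_1(y) \sim \partial_2(y)$. The crucial observation is that every generator has exactly one foot in $V_1$ and one in $V_2$, so each step crosses between the two summands and the chain alternates; a step leaving a vertex of $V_i$ must therefore use the generator through $\partial_i$, placing that vertex in $\image(\partial_i)$. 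Since $v,w$ both lie in $V_i$ and $v \neq w$, the chain has length at least two: its first step leaves $v \in V_i$, forcing $v \in \image(\partial_i)$, and its last step re-enters $V_i$ at $w$, forcing $w \in \image(\partial_i)$. The main obstacle is making this chain argument precise — in particular confirming that a nontrivial identification inside a single summand forces \emph{both} endpoints into the gluing image rather than only one — but the alternating, bipartite shape of the generating relations is exactly what pins down each endpoint.
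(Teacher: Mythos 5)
Your proof is correct. The paper itself offers no explicit proof of this lemma (it is stated as an immediate consequence of how composition is computed), and your argument supplies exactly the justification it implicitly relies on: composition is a pushout along the discrete graph \(Y\), computed pointwise, so edges are never identified and vertex identifications come from the equivalence relation generated by \(\partial_1(y) \sim \partial_2(y)\); your zig-zag analysis, using that every generating pair has one foot in \(V_1\) and one in \(V_2\) so that any nontrivial chain between two vertices of the same summand must both start and end with a crossing step, correctly pins both identified vertices inside \(\image(\partial_i)\).
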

%\begin{proof}
%  Define graph morphisms \(\alpha'_{i} \colon G_{i} \to G\) using the pushout.
%  The category of graphs is a presheaf category, thus it has epi-mono factorization.
%\end{proof}
% \input{cospan-graph-syco}
\subsection{Tree width}\label{sec:mwd-tree-width}
Here we show that monoidal tree width is bounded above by twice tree width and bounded below by tree width, while monoidal path with agrees with path width.
For the tree width case, we establish these bounds by defining two maps: the first, \(\tTomdec\), associates a monoidal tree decomposition to every recursive tree decomposition; the second,  \(\mTotdec\), associates a recursive tree decomposition to every monoidal tree decomposition.

The monoidal tree decomposition \(\tTomdec(T)\) is defined from the recursive tree decomposition \(T\) with its leaves corresponding to the subtrees of \(T\): if \(T' = (T_{1},V',T_{2})\) is a subtree of \(T\) and \(T_{p}\) is its parent tree, then there is a leaf of \(\tTomdec(T)\) whose label is \(g' = \cospan{X'}{}{\mathgraph{E'}{V'}}{}{Y'}\), where \(X' = V' \intersection \labelling(T_{p})\) are the vertices that \(g'\) shares with its parent and \(Y' = V' \intersection (\labelling(T_{1}) \union \labelling(T_{2}))\) are the vertices that \(g'\) shares with its children.
The structure of \(\tTomdec(T)\) is given by the shape of \(T\), as exemplified in the following.
% The idea behind the mapping from recursive tree decompositions to categorical tree decompositions is to take the subgraphs of \(\Gamma\) from the recursive decomposition and define graphs with left and right sources corresponding to them.
% Then, the graphs with left and right sources are composed according to the recursive decomposition.
\begin{example}\label{ex:tree-dec-mappings}
  Let \(\Gamma\) be the graph with sources below left.
  The monoidal tree decomposition on the right corresponds to the recursive tree decomposition on the left, where the cospans of graphs \(g_{i}\) are defined as explained above.
  \begin{center}
    \treeDecSourceExFig{} \qquad \qquad
    \mappingTreeDecExFig{}
  \end{center}
\end{example}

The following result formalises this idea.

\begin{proposition}\label{prop:treeDecToCatTreeDec}
  Let \(\Gamma = (G,X)\) be a graph with sources and \(T\) be a tree decomposition of \(\Gamma\).
  Let \(g \defn \cospan{X}{\inclusion}{G}{}{\emptyset}\) be the cospan of graphs corresponding to \(\Gamma\).
  Then, there is \(\tTomdec(T) \in \decset{g}^{rt}\) such that \(\decwidth(\tTomdec(T)) \leq 2 \cdot \decwidth(T)\).
\end{proposition}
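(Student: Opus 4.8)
The plan is to define \(\tTomdec\) by structural recursion on \(T\), with the shape of \(\tTomdec(T)\) mirroring that of \(T\): at each internal node I emit a sequential node whose left, atomic, factor is the \emph{local} cospan glued at that node, and whose right factor is the monoidal product of the decompositions built recursively from the two subtrees. Concretely, for \(T = \nodegenerator{T_1}{V'}{T_2}\) a decomposition of \(\Gamma = (\mathgraph{E}{V},X)\) into subgraphs \(\Gamma_i = (\mathgraph{E_i}{V_i},X_i)\), I set \(\tTomdec(T) \defn \nodegenerator{g'}{\dcomp_{X_1 \disjointunion X_2}}{(\nodegenerator{\tTomdec(T_1)}{\tensor}{\tTomdec(T_2)})}\) and let \(\tTomdec(\emptydec)\) be the leaf labelled by the empty cospan. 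Since each \(\tTomdec(T_i)\) is (inductively) an element of \(\decset{g_i}^{rt}\) for \(g_i \defn \cospan{X_i}{\inclusion}{G_i}{}{\emptyset}\) --- legitimate because \(X_i = V_i \intersection V' \subseteq V_i\) --- the recursion is well-typed, and every sequential node has an atomic morphism on its left, so \(\tTomdec(T)\) conforms to the grammar of \(\decset{\cdot}^{rt}\).

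The local cospan is \(g' \defn \cospan{X}{\inclusion}{\mathgraph{E \setminus (E_1 \disjointunion E_2)}{V'}}{\partial}{X_1 \disjointunion X_2}\): its apex is the glueing graph \(\Gamma'\) of~\Cref{def:recursiveTreeDec}, its left leg is the inclusion \(X \hookrightarrow V'\) (valid by condition (i)), and its right leg \(\partial\) sends each copy \(X_1\), \(X_2\) into \(V'\) by the inclusions \(X_i \subseteq V'\). The right object is the \emph{disjoint} union \(X_1 \disjointunion X_2\), so a vertex in \(X_1 \intersection X_2\) is pointed at twice: \(g'\) copies exactly the vertices shared by the two subtrees, which is what allows them to be fed separately to \(\tTomdec(T_1)\) and \(\tTomdec(T_2)\) across \(\tensor\).

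The substantive point is that \(\tTomdec(T)\) evaluates to \(g\), which reduces to the identity \(g' \dcomp (g_1 \tensor g_2) = g\) in \(\cospanGraphO\). This is the assertion recorded after~\Cref{def:recursiveTreeDec} that glueing \(\Gamma_1\) and \(\Gamma_2\) onto \(\Gamma'\) returns \(\Gamma\). I would prove it by unwinding the pushout that computes composition: the interface \(X_1 \disjointunion X_2\) identifies, for \(v \in X_i\), the copy of \(v\) in \(V'\) with the copy in \(V_i\), and conditions (ii)--(vi) ensure these identifications merge \(V'\), \(V_1\), \(V_2\) along precisely \(V_1 \intersection V_2 \subseteq V'\) and reassemble \(E = (E \setminus (E_1 \disjointunion E_2)) \disjointunion E_1 \disjointunion E_2\) with no spurious glueing. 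I expect this to be the main obstacle: one must check that the copying built into the right leg of \(g'\) interacts with the pushout so that shared vertices are identified once while unshared vertices are kept apart.

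Finally, the width bound follows from \(\decwidth(\tTomdec(T)) = \max_{v} \nodeweight(\mlabelling(v))\), for which it suffices to bound each node. A leaf carries \(g'\) with apex \(V'\), so \(\nodeweight(g') = \card{V'} \leq \decwidth(T)\), using \(\decwidth(T) \geq \card{V'}\) at every node of \(T\). A sequential node composes along \(X_1 \disjointunion X_2\), of weight \(\card{X_1} + \card{X_2}\); since \(X_i = V_i \intersection V' \subseteq V'\), this is at most \(2\card{V'} \leq 2\decwidth(T)\). The \(\tensor\)-nodes cost \(0\). Taking the maximum gives \(\decwidth(\tTomdec(T)) \leq 2\decwidth(T)\), the factor \(2\) arising precisely because the children's shared interface is presented as a copied, disjoint union rather than a genuine union.
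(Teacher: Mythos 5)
Your proposal is correct and takes essentially the same route as the paper's proof: structural induction on \(T\), emitting at each node an atomic cospan with apex \(\mathgraph{E \setminus (E_1 \disjointunion E_2)}{V'}\) on the left of a sequential cut along \(X_1 \disjointunion X_2\), with the two recursively obtained decompositions joined by \(\tensor\) on the right, and the factor \(2\) arising exactly as you say from \(\card{X_1} + \card{X_2} \leq 2\card{V'}\). The only (cosmetic) difference is that the paper factors your \(g'\) as \(h \dcomp_{X_1 \union X_2} b\), where \(b = \id{X_1 \setminus X_2} \tensor \cp_{X_1 \intersection X_2} \tensor \id{X_2 \setminus X_1}\) makes the duplication of shared vertices explicit via Frobenius copy morphisms, whereas you fuse that copying into the non-injective right leg of a single atomic cospan; both yield the same bound.
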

\begin{proof}[Proof sketch]
  The proof is by induction on \(T\).
  The base case is easy.
  For the inductive step, let \(T = \nodegenerator{T_{1}}{W}{T_{2}}\).
  Then, \(T_{i}\) are recursive tree decompositions of \(\Gamma_{i} = (G_{i}, X_{i})\) with \(G_{i} = \mathgraph{E_{i}}{V_{i}}\).
  The conditions in the definition of recursive tree decomposition (\Cref{def:recursiveTreeDec}) ensure that \(g\) can be decomposed as \(g = h \dcomp_{X_1 \union X_2} b \dcomp_{X_1 + X_2} (g_1 \tensor g_2)\), where \(h \defn \cospan{X}{}{H}{}{X_{1} \union X_{2}}\), \(H = \mathgraph{E \setminus (E_{1} \disjointunion E_{2})}{W}\), \(g_{i} \defn \cospan{X_{i}}{}{G_{i}}{}{\emptyset}\) and \(b\) is in the dashed box below.
  The maximum cut performed in this decomposition is \(\card{X_{1}} + \card{X_{2}}\), thus, by induction, we can compute the width of this decomposition:\\
  \begin{minipage}{0.5\textwidth}
    \begin{align*}
      & \decwidth(\tTomdec(T)) \\
      & \defn \max \{\card{W},\card{X_1 \union X_2}, \card{X_1} + \card{X_2}, \\
      & \qquad \decwidth(\tTomdec(T_1)), \decwidth(\tTomdec(T_2))\} \\
      & \leq \max \{2 \cdot \card{W}, 2 \cdot \decwidth(T_1),2 \cdot \decwidth(T_2)\} \\
      & = 2 \cdot \decwidth(T)
    \end{align*}
  \end{minipage}
  \begin{minipage}{0.49\textwidth}
    \begin{center}
      \mwdTwdUpperProofFig{}
    \end{center}
  \end{minipage}
  For the details see~\Cref{app:mon-tree-width}.
\end{proof}

Mapping a monoidal tree decomposition to a recursive tree decomposition follows essentially the same idea but requires extra care as the composition of two cospans of graphs \(g_{1} = \cospan{X}{\partial}{G_{1}}{\partial_{1}}{Y}\) and \(g_{2} = \cospan{Y}{\partial_{2}}{G_{2}}{}{\emptyset}\) can identify some vertices in \(G_{1}\) or \(G_{2}\).
This means that \(G_{1}\) and \(G_{2}\) might not be subgraphs of the apex \(G\) of \(g_{1} \dcomp g_{2}\), and, hence, that having a recursive tree decomposition of \((G_{2}, \image(\partial_{2}))\) does not imply having a recursive tree decomposition of \((G,\image(\partial))\).
\Cref{lemma:episPreserveTreeDec} shows that we can define a recursive tree decomposition of the subgraph of \(G\) that corresponds to \((G_{2}, \image(\partial_{2}))\) whose width is bounded by that of a recursive tree decomposition of \((G_{2}, \image(\partial_{2}))\).
This is possible because the mapping \(\alpha_{2} \colon G_{2} \to G\) from~\Cref{lemma:episFromComposition} respects the decomposition structure.
% However, by~\Cref{lemma:episFromComposition}, there is an epimorphism of graphs from \(G_{i}\) to a subgraph of \(G\).
% These epimorphisms respect the decomposition structure so that the recursive decomposition that can be defined respects the tree constraints.
% This is why we need~\Cref{lemma:episPreserveTreeDec}.

% We will use~\Cref{lemma:episFromComposition} to take the subgraphs of \(\Gamma\) that correspond to the generators in the monoidal decomposition.
% We also need to show that, given a recursive tree decomposition of a component in the monoidal decomposition, we have a recursive tree decomposition of the subgraph of \(\Gamma\) corresponding to it of bounded width.

\begin{example}\label{ex:epi-preserve-tree-dec}
  Let \(\Gamma\) be the graph with sources and \(T\) be its recursive tree decomposition shown in~\Cref{ex:tree-dec-mappings}.
  A graph epimorphism respects the structure of this decomposition if it does not identify vertices that are in disjoint components, as shown in \Cref{fig:epis-tree-dec} left.
  \begin{figure}[h!]
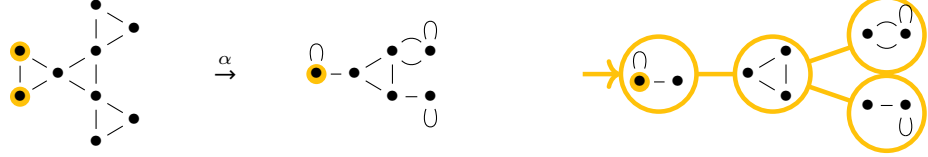

    \centering
    \epiYesExFig{}
    \qquad\qquad\treeDecSourceMapExFig{}
    \caption{Valid epimorphism (left) and corresponding decomposition (right).}\label{fig:epis-tree-dec}
  \end{figure}
  When this condition is met, we can define a recursive tree decomposition of \(\alpha(\Gamma)\) (\Cref{fig:epis-tree-dec}, right) whose width is less than the width of \(T\).
\end{example}

The following lemma constructs a recursive tree decomposition of \(\alpha(\Gamma)\) from a recursive tree decomposition of \(\Gamma\).

\begin{lemma}\label{lemma:episPreserveTreeDec}
  Let \(\Gamma = (G,X)\) and \(\Delta = (H,Y)\) be graphs with sources.
  Let \(T\) be a recursive tree decomposition of \(\Gamma\).
  Let \(\alpha \colon G \to H\) be a graph epimorphism such that, \(\alpha_{V}(X) = Y\) and, if \(\alpha_{V}(v) = \alpha_{V}(w)\), then there is a subtree \(T'\) of \(T\) with \(v,w \in \labelling(T')\).
  Then there is a recursive tree decomposition \(\epiTodec{\alpha}(T)\) of \(\Delta\) such that \(\decwidth(\epiTodec{\alpha}(T)) \leq \decwidth(T)\).
\end{lemma}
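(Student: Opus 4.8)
The plan is to push the decomposition $T$ forward along $\alpha$ by structural induction, setting $\epiTodec{\alpha}(\emptydec) \defn \emptydec$ and $\epiTodec{\alpha}(\nodegenerator{T_1}{V'}{T_2}) \defn \nodegenerator{\epiTodec{\alpha}(T_1)}{\alpha_V(V')}{\epiTodec{\alpha}(T_2)}$. That is, every bag $V'$ is replaced by its image $\alpha_V(V')$, and the subgraph $\Gamma_i = (\mathgraph{E_i}{V_i},X_i)$ carried at a node is replaced by $\alpha(\Gamma_i) = (\mathgraph{\alpha_E(E_i)}{\alpha_V(V_i)}, \alpha_V(X_i))$; the base case is forced since $\alpha$ epi and $\Gamma$ empty make $\Delta$ empty. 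Once $\epiTodec{\alpha}(T)$ is shown to be a valid recursive tree decomposition of $\Delta$ (\Cref{def:recursiveTreeDec}), the width bound is immediate: $\card{\alpha_V(V')} \leq \card{V'}$ for every bag, so comparing the two trees node by node gives $\decwidth(\epiTodec{\alpha}(T)) \leq \decwidth(T)$.

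The crux is making the induction close. To apply the inductive hypothesis to a child $T_i$ I must present $\epiTodec{\alpha}(T_i)$ as the image of $T_i$ under the restriction $\alpha_i \defn \alpha|_{G_i} \colon G_i \to \alpha(G_i)$, so I first check that $\alpha_i$ still satisfies the hypotheses. Surjectivity onto $\alpha(G_i)$ and $\alpha_{iV}(X_i) = \alpha_V(X_i)$ (the chosen source set of $\alpha(\Gamma_i)$) hold by construction. The delicate point is the co-occurrence condition: whenever $v,w \in V_i$ with $\alpha_V(v) = \alpha_V(w)$, I must find a subtree of $T_i$ (not merely of $T$) whose bag contains both. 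The hypothesis yields a subtree $T'$ of $T$ with $v,w \in \labelling(T')$; if $T'$ lies inside $T_i$ we are done, and otherwise $\labelling(T')$ is the root bag $V'$ or a bag of the sibling $T_j$, hence $\labelling(T') \subseteq V'$ or $\labelling(T') \subseteq V_j$. In either case $v,w \in V_i \cap (V' \union V_j)$, and since $V_i \intersection V_j \subseteq V'$ by condition (iv) we get $v,w \in V_i \intersection V' = X_i$; condition (i) for $T_i$ then places $X_i$ inside the root bag of $T_i$, supplying the required subtree. This localisation of identifications is the step I expect to be the \textbf{main obstacle}, as it is exactly where axioms (iv) and (i) of recursive tree decompositions are used.

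It then remains to verify the six conditions for the top node, writing $\hat V' \defn \alpha_V(V')$, $\hat V_i \defn \alpha_V(V_i)$, $\hat X_i \defn \alpha_V(X_i)$. Conditions (i), (ii), (iv) are routine images of those for $T$: (i) from $X \subseteq V'$, (ii) from surjectivity of $\alpha_V$, and (iv) from a short case analysis on where the witnessing bag of an identification $\alpha_V(v_1) = \alpha_V(v_2)$ (with $v_i \in V_i$) sits. The only condition genuinely using the hypothesis on $\alpha$ is (iii), $\hat X_i = \hat V_i \intersection \hat V'$: the inclusion $\subseteq$ is immediate, and for $\supseteq$ I take $u = \alpha_V(v) = \alpha_V(w)$ with $v \in V_i$, $w \in V'$; if $v \in V'$ then $v \in X_i$, while if $v \notin V'$ then every bag containing $v$ lies inside $T_i$ (a sibling bag is contained in $V_j$, and $v \in V_i \intersection V_j \subseteq V'$ would contradict $v \notin V'$), so the bag witnessing $\alpha_V(v) = \alpha_V(w)$ forces $w \in V_i$, whence $w \in V_i \intersection V' = X_i$ and $u \in \hat X_i$.

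Finally, conditions (v) and (vi) concern the edge partition. Condition (vi) transports directly, since $\edgeends{\alpha_E(e)} = \alpha_V(\edgeends{e})$ turns $\edgesetends{E \setminus (E_1 \disjointunion E_2)} \subseteq V'$ into $\edgesetends{\alpha_E(E) \setminus (\alpha_E(E_1) \disjointunion \alpha_E(E_2))} \subseteq \hat V'$. Condition (v), $\alpha_E(E_1) \intersection \alpha_E(E_2) = \emptyset$, is the one place requiring that $\alpha$ not merge an edge of $\Gamma_1$ with one of $\Gamma_2$; this is automatic for the glueing epimorphisms produced by \Cref{lemma:episFromComposition}, which identify only vertices and are injective on edges, and that is precisely the setting in which this lemma is invoked. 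Under that edge-injectivity the root edges of $\Delta$ are exactly $\alpha_E(E \setminus (E_1 \disjointunion E_2))$, so (v) and (vi) both hold and the induction is complete.
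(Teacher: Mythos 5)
Your construction is essentially the paper's own proof: structural induction on \(T\), replacing every bag and every subgraph by its image under \(\alpha\), and restricting \(\alpha\) to the child subgraphs so that the inductive hypothesis applies. You are in fact more careful than the paper at the delicate points: the paper merely asserts that the restrictions \(\alpha_i\) inherit the co-occurrence condition relative to \(T_i\), whereas you derive it from conditions (iv) and (i) of \Cref{def:recursiveTreeDec}, and you verify condition (iii) at the root node, which the paper leaves implicit.

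The one substantive divergence is condition (v), and your instinct there is right: it is a genuine issue, not a formality. An epimorphism satisfying the stated hypotheses can merge an edge of \(\Gamma_1\) with an edge of \(\Gamma_2\): take \(G\) to be two disjoint edges on \(\{a,b\}\) and \(\{c,d\}\), the decomposition with root bag \(\{a,b,c,d\}\) and one child per edge, and \(\alpha\) identifying \(a\) with \(c\) and \(b\) with \(d\); then the pushforward has \(F_1 \intersection F_2 \neq \emptyset\). The paper's proof never checks condition (v) at all, so on this point your argument improves on it rather than deviates from it. Be aware, though, that your fix (assuming \(\alpha\) injective on edges) proves a formally weaker lemma than the one stated; it does cover every use in the paper, since the epimorphisms of \Cref{lemma:episFromComposition} come from pushouts along discrete graphs and are therefore edge-injective. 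To get the statement in full generality the extra hypothesis is not needed: after pushing forward, reassign duplicated edges to one side, i.e.\ replace the right child's graph by the subgraph with edge set \(\alpha_E(E_2) \setminus \alpha_E(E_1)\). A recursive tree decomposition restricts, without increase of width, to any subgraph obtained by deleting edges while keeping all vertices (intersect every node's edge set with the remaining edges; only conditions (v) and (vi) mention edges, and both are preserved), and with this reassignment conditions (v) and (vi) at the root hold for an arbitrary \(\alpha\).
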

\begin{proof}
  See~\Cref{app:mon-tree-width}.
\end{proof}

Thanks to~\Cref{lemma:episPreserveTreeDec}, we can prove the following.

\begin{proposition}\label{prop:catTreeDecToTreeDec}
  Let \(g = \cospan{X}{\partial}{G}{}{\emptyset}\) and \(d \in \decset{g}^{rt}\).
  Let \(\Gamma = (G,\image(\partial))\).
  Then there is a recursive tree decomposition \(\mTotdec(d)\) of \(\Gamma\) such that \(\decwidth(\mTotdec(d)) \leq \max \{\decwidth(d), \card{\image(\partial)}\}\).
\end{proposition}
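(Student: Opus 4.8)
The plan is to construct $\mTotdec(d)$ by structural induction on the monoidal right tree decomposition $d \in \decset{g}^{rt}$, running the construction of $\tTomdec$ from \Cref{prop:treeDecToCatTreeDec} in reverse. Throughout I maintain the slightly stronger invariant that for a sub-decomposition $d'$ of a morphism $g' = \cospan{X'}{\partial'}{G'}{}{\emptyset}$ the output satisfies $\decwidth(\mTotdec(d')) \leq \max\{\decwidth(d'), \card{\image(\partial')}\}$, which is exactly the claimed bound specialised to each node. For the base case $d = \leafgenerator{g}$ I take $\mTotdec(d) \defn \nodegenerator{\emptydec}{\vertices(G)}{\emptydec}$ (or $\emptydec$ if $G$ has no vertices): conditions (i)--(vi) of \Cref{def:recursiveTreeDec} are immediate since both children are empty and every edge of $G$ has endpoints in $\vertices(G)$, and $\decwidth(\mTotdec(d)) = \card{\vertices(G)} = \nodeweight(g) = \decwidth(d)$.

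For the tensor case $d = \nodegenerator{d_1}{\tensor}{d_2}$ with $g = g_1 \tensor g_2$, the monoidal product in $\cospanGraphO$ is disjoint union, so $G = G_1 \disjointunion G_2$ and $\image(\partial) = \image(\partial_1) \disjointunion \image(\partial_2)$, with $g_i = \cospan{X_i}{\partial_i}{G_i}{}{\emptyset}$. I set $\mTotdec(d) \defn \nodegenerator{\mTotdec(d_1)}{\image(\partial)}{\mTotdec(d_2)}$. The key verification is condition (iii), $X_i = V_i \intersection V'$: by disjointness $\vertices(G_i) \intersection \image(\partial) = \image(\partial_i)$, which is precisely the source set of the graph decomposed by $\mTotdec(d_i)$; the other conditions follow from disjointness, and (vi) is vacuous because $E \setminus (E_1 \disjointunion E_2) = \emptyset$. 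Since $\image(\partial_i) \subseteq \image(\partial)$, the inductive hypotheses give $\decwidth(\mTotdec(d_i)) \leq \max\{\decwidth(d_i), \card{\image(\partial)}\}$, whence $\decwidth(\mTotdec(d)) = \max\{\card{\image(\partial)}, \decwidth(\mTotdec(d_1)), \decwidth(\mTotdec(d_2))\} \leq \max\{\card{\image(\partial)}, \decwidth(d)\}$. This is the case where the $\card{\image(\partial)}$ slack is genuinely used, since a tensor of small atomic pieces can have width below $\card{\image(\partial)}$.

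The composition case $d = \nodegenerator{h}{\dcomp_C}{d_1}$, with $h = \cospan{X}{}{H}{\partial_H}{C}$ atomic and $d_1 \in \decset{f_1}^{rt}$ for $f_1 = \cospan{C}{\partial_{f_1}}{F}{}{\emptyset}$, is the main obstacle and is exactly where \Cref{lemma:episFromComposition,lemma:episPreserveTreeDec} are needed: composition glues $H$ and $F$ along $C$, so $F$ need not be a subgraph of $G$. I apply \Cref{lemma:episFromComposition} to obtain subgraphs $G'_H, G'_F \subseteq G$ and surjections $\alpha_H \colon H \to G'_H$, $\alpha_F \colon F \to G'_F$ identifying only vertices in the image of $C$. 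By induction $\mTotdec(d_1)$ is a recursive tree decomposition of $(F, \image(\partial_{f_1}))$, and since its root label contains $\image(\partial_{f_1})$ by condition (i), every pair identified by $\alpha_F$ lies in a common subtree label; \Cref{lemma:episPreserveTreeDec} then yields $\epiTodec{\alpha_F}(\mTotdec(d_1))$, a recursive tree decomposition of $(G'_F, \alpha_F(\image(\partial_{f_1})))$ of no greater width. I attach the atomic part at the root: $\mTotdec(d) \defn \nodegenerator{\emptydec}{\vertices(G'_H)}{\epiTodec{\alpha_F}(\mTotdec(d_1))}$. Condition (i) holds since $g$'s boundary factors through $H$, so $\image(\partial) \subseteq \vertices(G'_H)$; the delicate point is condition (iii) for the right child, which reduces to the identity $\vertices(G'_H) \intersection \vertices(G'_F) = \alpha_F(\image(\partial_{f_1}))$, i.e.\ the shared vertices of the two halves of $G$ are exactly the image of the composition boundary $C$; condition (vi) holds because the only edges outside $G'_F$ lie in $G'_H$, whose endpoints are in $\vertices(G'_H) = V'$. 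For width, $\card{\vertices(G'_H)} \leq \card{\vertices(H)} = \nodeweight(h)$, and combining \Cref{lemma:episPreserveTreeDec}, the inductive hypothesis, and $\card{\image(\partial_{f_1})} \leq \card{C} = \nodeweight(C)$ gives $\decwidth(\mTotdec(d)) \leq \max\{\nodeweight(h), \nodeweight(C), \decwidth(d_1)\} = \decwidth(d)$.

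The hard part, as indicated, is the composition case: one must reconcile the fact that categorical composition can identify vertices with the purely combinatorial recursive decomposition, which is precisely what \Cref{lemma:episFromComposition,lemma:episPreserveTreeDec} accomplish, and then argue that the shared vertices of the two pieces of $G$ coincide with the image of the cut object $C$ in order to validate condition (iii) at the new root.
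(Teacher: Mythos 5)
Your proof is correct and takes essentially the same route as the paper's: structural induction on the monoidal right tree decomposition, with identical handling of the leaf and tensor cases, and with the composition case resolved exactly as in the paper via \Cref{lemma:episFromComposition} and \Cref{lemma:episPreserveTreeDec}, using the key observation that vertices identified by the pushout lie in the image of the cut object (so the epimorphism respects the decomposition structure) and that the shared vertices of the two subgraphs are precisely that image. The only immaterial difference is the tree shape at a composition node: you take root label \(\vertices(G'_H)\) with an empty left child, whereas the paper takes root label \(\image(\partial) \union (V_1 \intersection V_2)\) with the atomic part attached as a leaf child labelled \(V_1\); both satisfy \Cref{def:recursiveTreeDec} and yield the same width bound.
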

\begin{proof}[Proof sketch]
  Proceed by induction on \(d\).
  The base case is easily checked.
  If \(d = \nodegenerator{\leafgenerator{h_{1}}}{\dcomp_{Y}}{d_{2}}\), then \(g = h_1 \dcomp_Y h\), with \(h_1 = \cospan{X}{\partial_1}{H_1}{}{Y} \in \decgenerators\) and \(d_2\) monoidal tree decomposition of \(h = \cospan{Y}{\partial_W}{H}{}{\emptyset}\).
  By~\Cref{lemma:episFromComposition}, we can map \(H_{1}\) and \(H\) to subgraphs \(G_{1} = \mathgraph{E_{1}}{V_{1}}\) and \(G_{2} = \mathgraph{E_{2}}{V_{2}}\) of \(G\), respectively.
  By induction there is a recursive tree decomposition \(\mTotdec(d_{2})\) of \((H,\image(\partial_{W}))\).
  By~\Cref{lemma:episPreserveTreeDec}, we can map this decomposition to a decomposition \(\epiTodec{\alpha_{2}}(\mTotdec(d_{2}))\) of \(G_{2}\) without increasing the width.
  Let \(V' \defn \image(\partial) \union (V_{1} \intersection V_{2})\).
  Then, we can define a recursive tree decomposition \(\mTotdec(d) \defn \nodegenerator{V_{1}}{V'}{\epiTodec{\alpha_{2}}(\mTotdec(d_{2}))}\) of \(\Gamma\).
  Its width is:
  \begin{align*}
   \decwidth(\mTotdec(d))
  & \defn \max \{\card{V_1},\card{V'},\decwidth(\epiTodec{\alpha_2}(\mTotdec(d_2)))\}\\
  & \leq \max \{\card{V_1}, \card{Y}, \decwidth(d_2)\}
   \leq \max \{\decwidth(d),\card{\image(\partial)}\}
  \end{align*}
  If, instead, \(d = \nodegenerator{d_1}{\tensor}{d_2}\), then \(d_i\) are monoidal tree decompositions of \(g_i = \cospan{X_i}{\partial_i}{G_i}{}{\emptyset}\) and \(g = g_1 \tensor g_2\).
  We can apply the induction hypothesis to obtain recursive tree decompositions \(\mTotdec(d_{i})\) of \((G_{i}, \image(\partial_{i}))\) and we can define a recursive tree decomposition \(\mTotdec(d) \defn \nodegenerator{\mTotdec(d_1)}{\image(\partial)}{\mTotdec(d_2)}\) of \(\Gamma\).
  Its width can be computed:
  \begin{align*}
   \decwidth(\mTotdec(d))
  & \defn \max \{\card{\image(\partial)}, \decwidth(\mTotdec(d_1)), \decwidth(\mTotdec(d_2))\}
   \leq \max \{\decwidth(d),\card{\image(\partial)}\}
  \end{align*}
  See~\Cref{app:mon-tree-width} for the details of this proof.
\end{proof}

We summarize the results of~\Cref{prop:catTreeDecToTreeDec} and~\Cref{prop:treeDecToCatTreeDec} in the next theorem.

\begin{theorem}\label{th:mwd-tree-width}
  Let \(G\) be a graph and \(g = \cospan{\emptyset}{}{G}{}{\emptyset}\) be the corresponding morphism of \(\cospanGraphO\).
  Then \(\treewidth(G) \leq \mtwd(g) \leq 2 \cdot \treewidth(G)\).
\end{theorem}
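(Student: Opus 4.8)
The plan is to obtain the two inequalities separately, each by composing one of the two preceding propositions with the equivalence between recursive and ordinary tree decompositions (\Cref{lemma:equivRecursiveTreeDec}). The observation that makes the bookkeeping clean is that $g = \cospan{\emptyset}{}{G}{}{\emptyset}$ has empty sources: writing $g = \cospan{X}{\partial}{G}{}{\emptyset}$ we have $X = \emptyset$, hence $\image(\partial) = \emptyset$ and $\card{\image(\partial)} = 0$, and the associated graph with sources is $\Gamma = (G,\emptyset)$.

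For the lower bound $\treewidth(G) \leq \mtwd(g)$, I would start from a monoidal tree decomposition $d \in \decset{g}^{rt}$ realising $\mtwd^{r}(g)$. \Cref{prop:catTreeDecToTreeDec} produces a recursive tree decomposition $\mTotdec(d)$ of $\Gamma$ with $\decwidth(\mTotdec(d)) \leq \max\{\decwidth(d), \card{\image(\partial)}\} = \decwidth(d)$, the last equality because $\card{\image(\partial)} = 0$. Feeding $\mTotdec(d)$ into \Cref{lemma:equivRecursiveTreeDec}(2) yields an ordinary tree decomposition $\fromRecursiveDec(\mTotdec(d))$ of $G$ of the same width, so $\treewidth(G) \leq \decwidth(\mTotdec(d)) \leq \decwidth(d) = \mtwd^{r}(g)$.

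For the upper bound $\mtwd(g) \leq 2 \cdot \treewidth(G)$, I would go the other way. Take an optimal tree decomposition $(Y,t)$ of $G$, so $\decwidth(Y,t) = \treewidth(G)$. Since $X = \emptyset$, the side condition $X \subseteq t(r)$ in \Cref{lemma:equivRecursiveTreeDec}(1) holds for any choice of root $r$, so we obtain a recursive tree decomposition $\toRecursiveDec(Y,t)$ of $\Gamma$ with $\decwidth(\toRecursiveDec(Y,t)) = \treewidth(G)$. Then \Cref{prop:treeDecToCatTreeDec} gives $\tTomdec(\toRecursiveDec(Y,t)) \in \decset{g}^{rt}$ of width at most $2 \cdot \decwidth(\toRecursiveDec(Y,t)) = 2 \cdot \treewidth(G)$, whence $\mtwd^{r}(g) \leq 2 \cdot \treewidth(G)$.

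Finally, because $\cospanGraphO$ is a cospan category we may identify $\mtwd(g) = \mtwd^{r}(g)$ (left and right tree decompositions are interchangeable there, as noted after \Cref{defn:monoidalTreeWidth}), which upgrades both inequalities from $\mtwd^{r}$ to $\mtwd$ and completes the argument. The genuine work is entirely delegated to the two propositions; in particular the factor of $2$ is inherited from \Cref{prop:treeDecToCatTreeDec}, where decomposing $g$ as $h \dcomp b \dcomp (g_1 \tensor g_2)$ and cutting along $X_1 \union X_2$ can double the weight relative to a single bag $W$. I expect the only thing requiring care is checking that the empty-source hypotheses of the two propositions and of the equivalence lemma are all met, which they are precisely because $\image(\partial) = \emptyset$.
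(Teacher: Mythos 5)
Your proposal is correct and matches the paper's own route exactly: the paper derives this theorem precisely by combining \Cref{prop:catTreeDecToTreeDec} (with \(\image(\partial) = \emptyset\), giving the lower bound via \Cref{lemma:equivRecursiveTreeDec}) and \Cref{prop:treeDecToCatTreeDec} (giving the factor-2 upper bound), with \(\mtwd\) and \(\mtwd^{r}\) identified by the remark on cospan categories following \Cref{defn:monoidalTreeWidth}. Your spelled-out bookkeeping of the empty-source hypotheses is exactly the content the paper leaves implicit when it says it "summarizes" the two propositions.
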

% \begin{proof}
%   The result follows from~\Cref{prop:catTreeDecToTreeDec},~\Cref{prop:treeDecToCatTreeDec} and~\Cref{lemma:equivRecursiveTreeDec}.
% \end{proof}
%\subsection{Path width}\label{sec:mwd-path-width}

\subsection{Path width}\label{sec:path-width}
We can now show that monoidal path width agrees with path width.
% We rely on the fact that path decompositions are particular kinds of tree decompositions.
%We show this equality by defining maps from recursive path decompositions to monoidal path decompositions that preserve the width, and vice versa.
%Path decompositions are particular kinds of tree decompositions, thus techniques and intuitions are borrowed from the previous section.
%
First, the mapping \(\pTomdec\) from recursive path decompositions to monoidal path decompositions takes the subgraphs of \(\Gamma\) from the recursive decomposition and defines corresponding cospans of graphs.
%Then, the cospans of graphs are composed according to the shape of the recursive decomposition.
%The next proposition formalises this procedure.

\begin{proposition}\label{prop:pathDecToCatPathDec}
  Let \((G,X)\) be a graph with sources and \(T\) be a path decomposition.
  There is a monoidal path decomposition \(\pTomdec(T)\) of \(g \defn \cospan{X}{\inclusion}{G}{}{\emptyset}\) s.t.\  \(\decwidth(\pTomdec(T)) = \decwidth(T)\).
\end{proposition}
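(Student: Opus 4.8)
The plan is to prove the statement by induction on the structure of the recursive path decomposition $T$, building $\pTomdec(T)$ as a right-nested chain of sequential compositions with one atomic ``layer'' per vertex set appearing in $T$. Since monoidal path decompositions forbid $\tensor$ outright, there is no copying of the boundary and hence none of the factor-two blow-up seen in the tree case (\Cref{prop:treeDecToCatTreeDec}); this is precisely why I expect to obtain the \emph{sharp} equality $\decwidth(\pTomdec(T)) = \decwidth(T)$ rather than an inequality. For the base case, $T = \emptydec$ forces $\Gamma$ to be empty, so $g$ is the empty cospan with apex $\mathgraph{\emptyset}{\emptyset}$; taking $\pTomdec(\emptydec) \defn \leafgenerator{g}$ gives $\decwidth(\pTomdec(\emptydec)) = \nodeweight(g) = 0 = \decwidth(\emptydec)$.

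For the inductive step, write $T = (V_1, T')$, where $T'$ is a recursive path decomposition of the subgraph $\Gamma' = (\mathgraph{E'}{V'}, X')$ with $X' = V_1 \intersection V'$. The idea is to realise $g$ as a single sequential composition $g = h \dcomp_{X'} g'$, where $g' \defn \cospan{X'}{\inclusion}{G'}{}{\emptyset}$ is the cospan associated to $(G', X')$ (so the induction hypothesis applies to it directly), and $h \defn \cospan{X}{\inclusion}{H}{\inclusion}{X'}$ is the layer cospan with apex $H = \mathgraph{E \setminus E'}{V_1}$. Conditions (i) $X \subseteq V_1$ and (iii) $X' = V_1 \intersection V' \subseteq V_1$ make both boundary maps of $h$ into $V_1$ well-defined inclusions, and (iv) $\edgesetends{E \setminus E'} \subseteq V_1$ makes $H$ a legitimate graph on $V_1$. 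I would then check that composition in $\cospanGraphO$ — the pushout gluing $H$ and $G'$ along the shared sources $X'$ — identifies exactly the two copies of $X' = V_1 \intersection V'$ and nothing more (both boundary maps being injective), so its apex is $\mathgraph{E}{V}$ by conditions (ii) $V_1 \union V' = V$ and the disjointness $E = (E \setminus E') \disjointunion E'$, recovering $g$. I set $\pTomdec(T) \defn \nodegenerator{\leafgenerator{h}}{\dcomp_{X'}}{\pTomdec(T')}$.

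Finally, for the width computation: since $h$ is atomic, $\decwidth(\leafgenerator{h}) = \nodeweight(h) = \card{V_1}$, and the cut contributes $\nodeweight(\dcompnode{X'}) = \card{X'}$, so applying the induction hypothesis $\decwidth(\pTomdec(T')) = \decwidth(T')$ gives
\[
  \decwidth(\pTomdec(T)) = \max\{\card{V_1}, \card{X'}, \decwidth(T')\}.
\]
The one arithmetic observation to highlight is that $X' = V_1 \intersection V'$ yields $\card{X'} \leq \card{V_1}$, collapsing the maximum to $\max\{\card{V_1}, \decwidth(T')\} = \decwidth(T)$, which is exactly the equality claimed. The main (and only mildly delicate) obstacle is verifying that the pushout defining $h \dcomp_{X'} g'$ introduces no spurious vertex identifications; this is where the path case is genuinely cleaner than \Cref{prop:catTreeDecToTreeDec}, since here we construct the composite ourselves from injective boundaries rather than having to control an arbitrary composite via \Cref{lemma:episFromComposition}.
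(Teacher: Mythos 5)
Your proof is correct and follows essentially the same route as the paper's: induction on $T$, realising $g$ as the composite $g_1 \dcomp_{X'} g'$ of a layer cospan with apex $\mathgraph{E\setminus E'}{V_1}$ and the cospan of the remaining subgraph, verifying via the pushout that no extra identifications occur, and collapsing the width maximum using $\card{X'} \leq \card{V_1}$. The only cosmetic difference is your choice of base case ($T = \emptydec$ for the empty graph, versus the paper's single-bag case $T = (V)$), which changes nothing of substance.
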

\begin{proof}
  See~\Cref{app:mon-path-width}.
\end{proof}

The mapping \(\mTopdec\) defines the recursive path decomposition corresponding to a monoidal path decomposition by taking the subgraphs of \(\Gamma\) that correspond to the atoms used in the monoidal decomposition.
We show that these subgraphs respect the path constraints.
Similarly to tree decompositions, we use~\Cref{lemma:episFromComposition} to define the subgraphs of \(\Gamma\) that correspond to the atoms in the monoidal path decomposition.
%We also need to show that, given a recursive path decomposition of a component in the monoidal path decomposition, we have a recursive path decomposition of the subgraph of \(\Gamma\) corresponding to it of bounded width.

\begin{lemma}\label{lemma:episPreservePathDec}
  Let \(\Gamma = (G,X)\) and \(\Delta = (H, Y)\) be graphs with sources.
  Let \(T\) be a recursive path decomposition of \(\Gamma\).
  Let \(\alpha \colon G \to H\) be a graph epimorphism such that, \(\alpha_{V}(X) = Y\) and, if \(\alpha_{V}(v) = \alpha_{V}(w)\), then there is a subtree \(T'\) of \(T\) with \(v,w \in \labelling(T')\).
  Then there is a recursive path decomposition \(\epiTodec{\alpha}(T)\) of \(\Delta\) such that \(\decwidth(\epiTodec{\alpha}(T)) \leq \decwidth(T)\).
\end{lemma}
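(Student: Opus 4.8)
The plan is to induct on the structure of the recursive path decomposition $T$, mirroring the strategy behind \Cref{lemma:episPreserveTreeDec} but exploiting the simpler linear shape of path decompositions. The decomposition $\epiTodec{\alpha}(T)$ is built by pushing every bag forward along $\alpha$: writing $T = (V_1, T')$, I set $\epiTodec{\alpha}(T) \defn (\alpha_V(V_1), \epiTodec{\alpha'}(T'))$, where $\alpha'$ is the corestriction of $\alpha$ to the subgraph decomposed by $T'$. Because $\alpha_V$ is a function, $\card{\alpha_V(V_1)} \leq \card{V_1}$, so the width can only drop and the bound $\decwidth(\epiTodec{\alpha}(T)) \leq \decwidth(T)$ follows immediately from the inductive hypothesis; the real content is that $\epiTodec{\alpha}(T)$ is still a \emph{valid} recursive path decomposition of $\Delta$.

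In the base case $T = ()$ the graph $G$ is empty, so surjectivity of $\alpha$ makes $H$ empty and $\epiTodec{\alpha}(()) \defn ()$ works. For the inductive step, $T'$ decomposes a subgraph $\Gamma' = (\mathgraph{E'}{V'}, X')$ of $\Gamma$ satisfying conditions (i)--(iv) of \Cref{def:recursivePathDec}. I would first define $\Delta' \defn (\alpha(G'), \alpha_V(X'))$, a subgraph of $\Delta$, and take $\alpha'$ to be $\alpha$ corestricted to $\alpha(G')$. Before invoking induction I must check that $\alpha'$ still satisfies the hypotheses for $T'$: the equality $\alpha'_V(X') = \alpha_V(X')$ is by definition, and if $\alpha_V(v) = \alpha_V(w)$ for $v, w \in V'$, then the common bag $V_i$ supplied by the hypothesis on $\alpha$ lies inside $T'$ — directly when $V_i$ is a bag of $T'$, and via $v, w \in V_1 \intersection V' = X' \subseteq \labelling(T')$ (using condition (iii) for $T$ and condition (i) for $T'$) when $V_i = V_1$. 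Induction then yields $\epiTodec{\alpha'}(T')$, a path decomposition of $\Delta'$ of width at most $\decwidth(T')$, and it remains to verify conditions (i)--(iv) for the prepended bag $\alpha_V(V_1)$. Conditions (i), (ii), (iv) are routine: (i) follows from $\alpha_V(X) = Y$ and $X \subseteq V_1$; (ii) from surjectivity together with $V_1 \union V' = V$; (iv) because any edge of $H$ outside $\alpha(G')$ is the image of an edge of $G$ outside $E'$, whose endpoints lie in $V_1$ by condition (iv) for $T$.

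The main obstacle is condition (iii), i.e.\ showing $\alpha_V(X') = \alpha_V(V_1) \intersection \alpha_V(V')$; the nontrivial inclusion is $\alpha_V(V_1) \intersection \alpha_V(V') \subseteq \alpha_V(V_1 \intersection V')$, which is exactly where the hypothesis on $\alpha$ is essential (without it $\alpha$ could glue a vertex of $V_1 \setminus V'$ to one of $V' \setminus V_1$ and destroy the path shape). Given $y = \alpha_V(v_1) = \alpha_V(v')$ with $v_1 \in V_1$ and $v' \in V'$, the hypothesis produces a subtree $T''$ with $v_1, v' \in \labelling(T'')$; a short case analysis on whether $T'' = T$ or $T''$ is a subtree of $T'$ shows that one of $v_1, v'$ lies in $V_1 \intersection V'$, whence $y \in \alpha_V(V_1 \intersection V')$. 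Intuitively, each vertex of a path decomposition occupies a contiguous interval of bags, and permitting $\alpha$ to merge only co-occurring vertices keeps these intervals overlapping, hence contiguous after merging — this is precisely what preserves the path shape, and it is the crux of both the hypothesis-transfer step and condition (iii).
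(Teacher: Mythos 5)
Your proposal is correct and follows essentially the same route as the paper's proof: structural induction on $T$, pushing each bag forward along $\alpha_V$, corestricting $\alpha$ to the subgraph decomposed by $T'$, and concluding the width bound from $\card{\alpha_V(V_1)} \leq \card{V_1}$. In fact your argument is more complete than the paper's, which defines $Y' = \alpha_V(V_1 \intersection V')$ but never explicitly verifies condition (iii) (that $\alpha_V(V_1 \intersection V') = \alpha_V(V_1) \intersection \alpha_V(V')$, the one place the glueing hypothesis is genuinely needed), nor spells out the transfer of that hypothesis to $\alpha'$ when the witnessing subtree is $T$ itself — both points you handle correctly.
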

\begin{proof}
  See~\Cref{app:mon-path-width}.
\end{proof}

With this result, we can show the following.

\begin{proposition}\label{prop:catPathDecToPathDec}
  Let \(g = \cospan{X}{\partial}{G}{}{\emptyset}\) and \(d\) be a monoidal path decomposition of \(g\).
  Then there is a recursive path decomposition \(\mTopdec(d)\) of \((G,\image(\partial))\) such that \(\decwidth(\mTopdec(d)) \leq \decwidth(d)\).
\end{proposition}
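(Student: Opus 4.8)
The plan is to mirror the tree-width argument of \Cref{prop:catTreeDecToTreeDec}, using \Cref{lemma:episPreservePathDec} in place of \Cref{lemma:episPreserveTreeDec}, but with one preliminary simplification that exploits the linear shape of path decompositions. A monoidal path decomposition is a binary tree all of whose internal nodes are sequential compositions, so it merely records a way of writing \(g = \ell_1 \dcomp \cdots \dcomp \ell_r\) as a chain of atomic morphisms together with a bracketing. For such a chain the width of \Cref{defn:decompositionWidth} is independent of the bracketing: each of the \(r-1\) intermediate objects is cut by exactly one internal node, no matter how the chain is associated, so \(\decwidth(d) = \max\{\max_i \nodeweight(\ell_i),\, \max_i \nodeweight(X_i)\}\). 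Hence, by associativity of \(\dcomp\), I may assume \(d\) is right-associated without changing its width: either \(d = \leafgenerator{g}\), or \(d = \nodegenerator{\leafgenerator{h_1}}{\dcomp_{Y}}{d_2}\) with \(h_1 = \cospan{X}{\partial}{H_1}{\partial_1}{Y} \in \decgenerators\) atomic and \(d_2 \in \decset{h}^{p}\) a path decomposition of \(h = \cospan{Y}{\partial_W}{H}{}{\emptyset}\). Crucially, the tail \(h\) again has empty right boundary, so the proposition as stated applies to \(d_2\) and no generalisation is needed. In the base case \(g = \cospan{X}{\partial}{\mathgraph{E}{V}}{}{\emptyset}\) I take \(\mTopdec(d) \defn (V, \emptydec)\); conditions (i)--(iv) of \Cref{def:recursivePathDec} hold trivially, and \(\decwidth(\mTopdec(d)) = \card{V} = \nodeweight(g) = \decwidth(d)\).

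For the inductive step \(d = \nodegenerator{\leafgenerator{h_1}}{\dcomp_{Y}}{d_2}\), I apply \Cref{lemma:episFromComposition} to \(g = h_1 \dcomp_Y h\) to obtain subgraphs \(G_1 = \mathgraph{E_1}{V_1}\), \(G_2 = \mathgraph{E_2}{V_2}\) of \(G\) with surjections \(\alpha_1 \colon H_1 \to G_1\), \(\alpha_2 \colon H \to G_2\) that only identify vertices lying in the glued boundary. By the induction hypothesis there is a recursive path decomposition \(\mTopdec(d_2)\) of \((H, \image(\partial_W))\) with \(\decwidth(\mTopdec(d_2)) \leq \decwidth(d_2)\). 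Every pair of vertices identified by \(\alpha_2\) lies in \(\image(\partial_W)\), and these are the sources of \((H, \image(\partial_W))\), hence contained in the front bag \(\labelling(\mTopdec(d_2))\); thus the hypotheses of \Cref{lemma:episPreservePathDec} are met, and it yields a recursive path decomposition \(\epiTodec{\alpha_2}(\mTopdec(d_2))\) of \((G_2, \alpha_{2V}(\image(\partial_W)))\) of no greater width. I then set \(\mTopdec(d) \defn (V_1, \epiTodec{\alpha_2}(\mTopdec(d_2)))\), the list whose front bag is \(V_1 = \vertices(G_1)\) followed by this tail.

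It remains to check validity and bound the width. For the four conditions of \Cref{def:recursivePathDec}: \(\image(\partial) \subseteq V_1\) since \(\partial\) factors through \(H_1\) and so lands in \(\image(\alpha_{1V}) = V_1\); \(V_1 \union V_2 = V\) and \(E_1 \disjointunion E_2 = E\) since \(\alpha_1,\alpha_2\) jointly cover \(G\); \(\edgesetends{E \setminus E_2} = \edgesetends{E_1} \subseteq V_1\); and the tail's sources \(\alpha_{2V}(\image(\partial_W))\) coincide with the shared vertices \(V_1 \intersection V_2\) along which \(h_1\) and \(h\) are glued. For the width, \(\decwidth(\mTopdec(d)) = \max\{\card{V_1}, \decwidth(\epiTodec{\alpha_2}(\mTopdec(d_2)))\} \leq \max\{\card{V_1}, \decwidth(d_2)\}\); surjectivity of \(\alpha_1\) gives \(\card{V_1} \leq \card{\vertices(H_1)} = \nodeweight(h_1)\), and both \(\nodeweight(h_1)\) and \(\decwidth(d_2)\) are at most \(\decwidth(d)\), so \(\decwidth(\mTopdec(d)) \leq \decwidth(d)\). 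The main obstacle I anticipate is the bookkeeping here, in particular establishing condition (iii), \(\alpha_{2V}(\image(\partial_W)) = V_1 \intersection V_2\), which pins the boundary produced by \Cref{lemma:episPreservePathDec} exactly to the glueing interface of the two subgraphs; the remainder follows the pattern already used for tree width.
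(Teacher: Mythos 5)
Your proof is correct and follows essentially the same route as the paper's: induction on the decomposition using \Cref{lemma:episFromComposition} and \Cref{lemma:episPreservePathDec}, with the same construction \(\mTopdec(d) = (V_1,\epiTodec{\alpha_2}(\mTopdec(d_2)))\) and the same width computation. Your explicit argument that the width of a composition chain is independent of bracketing (so one may assume right-association) is precisely the content of the remark the paper invokes implicitly, which is what licenses its case analysis covering only \(d = \leafgenerator{g}\) and \(d = \nodegenerator{\leafgenerator{h_1}}{\dcomp_{Y}}{d_2}\).
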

\begin{proof}
  See~\Cref{app:mon-path-width}.
\end{proof}

\Cref{prop:catPathDecToPathDec} and~\Cref{prop:pathDecToCatPathDec} combine to:

\begin{theorem}\label{th:mwd-path-width}
  Let \(G\) be a graph with cospan \(g = \cospan{\emptyset}{}{G}{}{\emptyset}\).
  %morphism of \(\cospanGraphO\).
  %
  Then \(\pathwidth(G) = \mpwd(g)\).
\end{theorem}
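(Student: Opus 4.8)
The plan is to combine the two translation directions supplied by \Cref{prop:pathDecToCatPathDec} and \Cref{prop:catPathDecToPathDec}, using \Cref{lemma:equivRecursivePathDec} to pass between the original path decompositions of Robertson and Seymour and the recursive ones, and exploiting that in the situation of the theorem the source set is empty. Throughout I view $G$ as the graph with sources $\Gamma = (G,\emptyset)$, so that the left leg $\partial$ of $g = \cospan{\emptyset}{\inclusion}{G}{}{\emptyset}$ has $\image(\partial) = \emptyset$ and every side condition mentioning the sources becomes vacuous.

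For the inequality $\mpwd(g) \leq \pathwidth(G)$, I would start from an optimal path decomposition $(P,p)$ of $G$, so that $\decwidth(P,p) = \pathwidth(G)$. The side condition $X \subseteq V_1$ in \Cref{lemma:equivRecursivePathDec}(1) holds automatically since $X = \emptyset$; hence there is a recursive path decomposition $T = \toRecursiveDec(P,p)$ of $\Gamma$ with $\decwidth(T) = \pathwidth(G)$. Applying \Cref{prop:pathDecToCatPathDec} to $T$ yields a monoidal path decomposition $\pTomdec(T)$ of $g$ with $\decwidth(\pTomdec(T)) = \decwidth(T) = \pathwidth(G)$. Taking the minimum over all monoidal path decompositions of $g$ gives $\mpwd(g) \leq \pathwidth(G)$.

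For the reverse inequality $\pathwidth(G) \leq \mpwd(g)$, I would take an optimal monoidal path decomposition $d$ of $g$, so $\decwidth(d) = \mpwd(g)$. Since $\image(\partial) = \emptyset$, \Cref{prop:catPathDecToPathDec} produces a recursive path decomposition $\mTopdec(d)$ of $(G,\emptyset)$ with $\decwidth(\mTopdec(d)) \leq \decwidth(d) = \mpwd(g)$. Converting back with \Cref{lemma:equivRecursivePathDec}(2) gives an ordinary path decomposition $\fromRecursiveDec(\mTopdec(d))$ of $G$ of the same width, whence $\pathwidth(G) \leq \decwidth(\mTopdec(d)) \leq \mpwd(g)$. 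Combining the two inequalities yields $\pathwidth(G) = \mpwd(g)$.

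In contrast to the tree width case, no factor of two appears: monoidal path decompositions ban $\otimes$ outright, so there is no \virgolette{splitting} of the interface that would inflate the boundary, and the width is preserved exactly in \Cref{prop:pathDecToCatPathDec} (and up to the right inequality in \Cref{prop:catPathDecToPathDec}). The genuine content of the argument therefore lives entirely in the two propositions; at the level of the theorem the only things to verify are the trivial side conditions on the empty source set and the bookkeeping of which translation preserves width exactly versus up to an inequality. Accordingly I expect no real obstacle here — the one subtlety, flagged before \Cref{prop:catPathDecToPathDec}, is that composing cospans can identify vertices so that the apices of the factors need not be subgraphs of the composite, but this is already absorbed into \Cref{prop:catPathDecToPathDec} via \Cref{lemma:episPreservePathDec}.
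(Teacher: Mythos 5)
Your proposal is correct and follows exactly the paper's (implicit) argument: the theorem is stated as the direct combination of \Cref{prop:pathDecToCatPathDec} and \Cref{prop:catPathDecToPathDec}, with \Cref{lemma:equivRecursivePathDec} bridging ordinary and recursive path decompositions, and the empty source set making all side conditions vacuous. Your bookkeeping of which direction preserves width exactly versus up to an inequality is also precisely what makes the two bounds combine into an equality.
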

%\begin{proof}
%  The result follows from~\Cref{prop:catPathDecToPathDec},~\Cref{prop:pathDecToCatPathDec} and~\Cref{lemma:equivRecursivePathDec}.
%\end{proof}
\subsection{Branch width}\label{sec:mwd-branch-width}
Here we show that monoidal width is bounded above by branch width \(+ 1\) and bounded below by half of branch width.
We prove these bounds by defining maps from recursive branch decompositions to monoidal decompositions that preserve the width, and vice versa.

The idea behind the mapping from recursive branch decompositions to monoidal decompositions is to take a one-edge graph for each leaf of the recursive branch decomposition and compose them following the structure of the decomposition tree.

\begin{example}
  The \(3\)-clique has a branch decomposition as shown on the left.
  The corresponding monoidal decomposition is shown on the right.
  \begin{center}
    \bwdMappingExFig{}
  \end{center}
\end{example}

The next proposition formalises this procedure, where the \(+ 1\) in the bound comes from the result of~\Cref{lemma:mwd-copy}.
\begin{proposition}\label{prop:mwd-branch-width-upper-bound}
  Let \(\Gamma = (G,X)\) be a graph with sources and \(T\) be a recursive branch decomposition of \(\Gamma\).
  Let \(g \defn \cospan{X}{\inclusion}{G}{}{\emptyset}\) be the corresponding cospan.
  Then, there is \(\bTomdec(T) \in \decset{g}\) such that \(\decwidth(\bTomdec(T)) \leq \decwidth(T) + 1\).
\end{proposition}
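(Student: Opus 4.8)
The plan is to define \(\bTomdec\) by structural induction on the recursive branch decomposition \(T\), proving the width bound simultaneously. The shape of \(\bTomdec(T)\) follows \(T\): leaves become atomic cospans, and each internal node becomes a piece of wiring (composition, monoidal product and copying) that glues the two recursively built pieces along their shared vertices. \textbf{Base cases.} If \(\Gamma\) is discrete then \(T = \emptydec\), \(\decwidth(T)=0\), and \(g = \cospan{X}{\inclusion}{\mathgraph{\emptyset}{V}}{}{\emptyset}\) is a monoidal product of discards \(\delete\) (one per vertex of \(X\)) and isolated-vertex morphisms (one per vertex of \(V \setminus X\)), each of weight \(1\); hence \(\decwidth(\bTomdec(T)) \leq 1 = \decwidth(T)+1\). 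If \(\Gamma\) has a single edge \(e\) with \(\edgeends{e} = \{u,w\}\), then \(g\) is assembled from the edge generator \(\oneedge\) (weight \(2\)) together with copies and discards exposing exactly \(X = \boundary(\Gamma)\) on the left, giving \(\decwidth(\bTomdec(T)) \leq \max\{\card{X}+1,\,2\}\); this equals \(\decwidth(T)+1 = \card{X}+1\) except for an isolated edge (\(\card{X}=0\)), where the weight-\(2\) generator is unavoidable and the \(\max\) with \(2\) is needed.

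\textbf{Inductive step.} Let \(T = \nodegenerator{T_1}{\Gamma}{T_2}\) with \(\Gamma_i = (\mathgraph{E_i}{V_i}, X_i)\), and set \(S \defn V_1 \intersection V_2\), so \(X_i = S \union (X \intersection V_i) = S \disjointunion X_i^\partial\) with \(X_i^\partial \defn (X \intersection V_i) \setminus S\). By induction I have \(d_i \defn \bTomdec(T_i) \in \decset{g_i}\) for \(g_i = \cospan{X_i}{}{G_i}{}{\emptyset}\), with \(\decwidth(d_i) \leq \decwidth(T_i)+1\). Conditions (i)--(iii) guarantee that \(G\) is obtained from \(G_1\) and \(G_2\) by identifying their common vertices \(S\), i.e.\ by a gluing along \(S\). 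The naive realisation \(g = w \dcomp (g_1 \tensor g_2)\) composes along \(X_1 \disjointunion X_2\), a cut of weight \(\card{X_1}+\card{X_2}\), which may be as large as \(2\decwidth(T)\) and is too expensive. The crux is to share \(S\) using the \emph{efficient} copy of \Cref{lemma:mwd-copy} instead of a naive duplication.

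Concretely, I apply \Cref{lemma:mwd-copy} to \(f = g_2\) with \(\overline{X} = S\) (each factor a single vertex, so \(n = \card{S}\) and \(\max_i \nodeweight(X_i) = 1\)), obtaining \(\gamma(g_2) \colon X_2 \to S\) together with a decomposition \(\copyMdec(d_2)\) of width at most \(\max\{\decwidth(d_2),\, \card{X_2}+1\}\), since \(\nodeweight(Y)+\nodeweight(Z) = \card{X_2}-\card{S}\). The shared vertices off the boundary, \(S \setminus X\), are created by prepending units \(\eta \colon \emptyset \to 1\): this gives \(\tilde{g_2} \defn (\id{} \tensor \eta_{S\setminus X}) \dcomp \gamma(g_2) \colon (X\intersection S) \disjointunion X_2^\partial \to S\), where the composition is along \(X_2\) (weight \(\card{X_2}\)) and the units have weight \(1\). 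Finally \(\bTomdec(T)\) is the decomposition of \(g = (\id{X_1^\partial} \tensor \tilde{g_2}) \dcomp g_1\), whose composition is along \(X_1 = S \disjointunion X_1^\partial\) (weight \(\card{X_1}\)) and whose input is exactly \(X = X_1^\partial \disjointunion (X\intersection S) \disjointunion X_2^\partial\). Since \(\card{X_1},\card{X_2} \leq \decwidth(T)\) and \(\decwidth(d_i) \leq \decwidth(T)+1\), every node has weight at most \(\decwidth(T)+1\), which is the claim.

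The main obstacle is the width bookkeeping in this inductive step, and it has two facets: the shared interface \(S\) must be duplicated to feed both \(g_1\) and \(g_2\), and \(S\) typically contains internal vertices \(S\setminus X\) that must be created and shared without ever appearing on the boundary. A direct copy of the interface would double its cost; \Cref{lemma:mwd-copy} is precisely what confines the overhead to the additive \(+1\) (the \((n+1)\) factor), while the units dispose of the internal vertices at weight \(1\). The remaining work is routine and deferred to the appendix: verifying, via coherence of the Frobenius/copy structure and the gluing conditions (i)--(iii), that the constructed morphism really equals \(g\), and inserting the symmetries needed to reorder tensor factors (each of weight at most \(2\)).
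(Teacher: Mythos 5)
Your proposal is correct and takes essentially the same route as the paper's proof: structural induction on \(T\), realising each internal node \(\nodegenerator{T_1}{\Gamma}{T_2}\) by gluing \(g_1\) and \(g_2\) along the separator \(V_1 \intersection V_2\) (created by units and duplicated via \Cref{lemma:mwd-copy}, which confines the copying cost to \(\card{X_i}+1\)), followed by the same bookkeeping through \(\card{X_i} \leq \decwidth(T_i)\) --- the only difference being orientation (you apply the copy lemma to \(g_2\) and compose with \(g_1\) outermost, the paper applies it to \(g_1\) and composes with \(g_2\) along \(X_2\)), which is immaterial. Your caveat about a lone edge with \(\card{X}=0\), where only \(\max\{\decwidth(T)+1,\,2\}\) is achievable, flags a genuine degenerate case that the paper's own base case silently glosses over (it asserts width \(1\) there), so this is a sign of care rather than a gap.
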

\begin{proof}
  See~\Cref{app:mon-branch-width}.
\end{proof}

The mapping \(\mTobdec\) follows the same idea of the mapping \(\bTomdec\) but requires extra care, as in the two previous cases: we need to keep track of which vertices are going to be identified in the final cospan.
The function \(\phi\) stores this information, thus it cannot identify two vertices that are not already in the boundary of the graph.

\begin{proposition}\label{prop:mwd-branch-width-lower-bound}
  Let \(h = \cospan{A}{\partial_A}{H}{\partial_B}{B}\) with \(H = \mathgraph{F}{W}\).
  Let \(\phi \colon W \to V\) such that \(\forall \ w \neq w' \in W \ \phi(w) = \phi(w') \implies w,w' \in \image(\partial_A) \union \image(\partial_B)\) (glueing property).
  Let \(d \in \decset{h}\) and \(\Gamma \defn (\mathgraph{F}{\image(\phi)},\image(\partial_A \dcomp \phi) \union \image(\partial_B \dcomp \phi))\).
  Then, there is a recursive branch decomposition \(\mTobdec(d)\) of \(\Gamma\) such that \(\decwidth(\mTobdec(d)) \leq 2 \cdot \max \{\decwidth(d), \card{A}, \card{B}\}\).
\end{proposition}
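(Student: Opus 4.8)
The plan is to induct on the monoidal decomposition $d \in \decset{h}$, carrying $\phi$ and its glueing property as an invariant: at each node I construct $\mTobdec(d)$ and check that the boundary assigned by the statement, $\image(\partial_A \dcomp \phi) \union \image(\partial_B \dcomp \phi)$, coincides with the source set that condition (iii) of a recursive branch decomposition demands. In the base case $d = \leafgenerator{h}$, so $\decwidth(d) = \nodeweight(h) = \card{W}$; since every boundary occurring in a recursive branch decomposition is a subset of the ambient vertex set, any recursive branch decomposition of $\Gamma$ (one always exists, by splitting $F$ recursively) has width at most $\card{\vertices(\Gamma)} = \card{\image(\phi)} \leq \card{W} = \decwidth(d)$, and this is below $2\max\{\decwidth(d),\card{A},\card{B}\}$, so I simply take one.

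For the tensor step $d = \nodegenerator{d_1}{\tensor}{d_2}$ we have $h = h_1 \tensor h_2$, and since $\tensor$ is disjoint union in $\cospanGraphO$ the apex splits as $W = W_1 \disjointunion W_2$, $F = F_1 \disjointunion F_2$, with $A = A_1 \disjointunion A_2$ and $B = B_1 \disjointunion B_2$. I restrict $\phi$ to $\phi_i \defn \phi|_{W_i}$, which inherits the glueing property, apply the induction hypothesis to each $d_i$ to obtain recursive branch decompositions of $\Gamma_i \defn (\mathgraph{F_i}{\image(\phi_i)}, \ldots)$, and set $\mTobdec(d) \defn \nodegenerator{\mTobdec(d_1)}{\Gamma}{\mTobdec(d_2)}$. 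Condition (i) is $F = F_1 \disjointunion F_2$, condition (ii) is $\image(\phi) = \image(\phi_1) \union \image(\phi_2)$, and condition (iii) follows because the glueing property forces every vertex of $\image(\phi_1) \intersection \image(\phi_2)$ to be a $\phi$-image of a boundary vertex of $h$.

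The composition step $d = \nodegenerator{d_1}{\dcomp_C}{d_2}$, with $h = h_1 \dcomp_C h_2$, $h_1 \colon A \to C$ and $h_2 \colon C \to B$, is where I expect the real work. The apices $H_1, H_2$ of $h_1, h_2$ are no longer subgraphs of $H$, but \Cref{lemma:episFromComposition} supplies subgraphs $G'_1, G'_2$ jointly covering $H$ and surjections $\alpha_i \colon H_i \to G'_i$ that identify vertices only along the shared interface $C$. I then transport $\phi$ along these, setting $\phi_i \defn \alpha_{iV} \dcomp \phi$. The crucial point is that each $\phi_i$ still has the glueing property: an identification made by $\phi_i$ is either already made by $\alpha_i$, hence on the $C$-boundary of $h_i$, or made by $\phi$ on a pair of vertices that, pulled back through $\alpha_i$, lie on the $A$- or $B$-boundary of $h$ and therefore on the boundary of $h_i$. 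Applying the induction hypothesis to $d_1$ with $\phi_1$ and to $d_2$ with $\phi_2$ and combining as $\nodegenerator{\mTobdec(d_1)}{\Gamma}{\mTobdec(d_2)}$ gives the decomposition; conditions (i)--(iii) hold because $\alpha_1, \alpha_2$ jointly cover $H$ and overlap precisely on the $\phi$-images of the glued vertices.

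It remains to bound the width. In both inductive cases $\decwidth(\mTobdec(d)) = \max\{\decwidth(\mTobdec(d_1)), \decwidth(\mTobdec(d_2)), \card{\boundary(\Gamma)}\}$. Here $\card{\boundary(\Gamma)} \leq \card{A} + \card{B} \leq 2\max\{\card{A}, \card{B}\}$, which is the source of the factor $2$. For the inductive terms I use $\decwidth(d_i) \leq \decwidth(d)$ together with $\card{C} \leq \decwidth(d)$ (the cost of the composition node, by \Cref{defn:decompositionWidth}) in the composition case, and $\card{A_i} \leq \card{A}$, $\card{B_i} \leq \card{B}$ in the tensor case, so that the inductive bound $2\max\{\decwidth(d_i), \card{A_i}, \card{B_i}\}$ never exceeds $2\max\{\decwidth(d), \card{A}, \card{B}\}$. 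The main obstacle is the bookkeeping of the composition step: preserving the glueing property of $\phi_i$ under the epimorphisms of \Cref{lemma:episFromComposition} and verifying that the induced source sets agree with condition (iii); once this is in place, the width estimate is routine.
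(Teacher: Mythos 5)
Your proposal is correct and follows essentially the same route as the paper's proof: induction on \(d\), transporting \(\phi\) along the coproduct/pushout injections into the composite apex (your \(\phi_i \defn \alpha_{iV} \dcomp \phi\) is exactly the paper's \(\inclusion[i] \dcomp \phi\)), verifying that the glueing property is preserved---via \Cref{lemma:episFromComposition} in the composition case---and closing with the same width arithmetic, where \(\card{\boundary(\Gamma)} \leq \card{A} + \card{B}\) yields the factor \(2\) and \(\card{C} \leq \decwidth(d)\) absorbs the cut object. The only cosmetic difference is that you handle the three leaf subcases (\(F = \emptyset\), \(\card{F} = 1\), \(\card{F} > 1\)) uniformly by bounding any recursive branch decomposition's width by \(\card{\image(\phi)} \leq \card{W} = \decwidth(d)\), which is a valid simplification of the paper's case split.
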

\begin{proof}
  See~\Cref{app:mon-branch-width}.
\end{proof}

We summarize these results in the following theorem.
\begin{theorem}
  Let \(G\) be a graph and \(g = \cospan{\emptyset}{}{G}{}{\emptyset}\) be the corresponding morphism of \(\cospanGraphO\).
  Then, \(\frac{1}{2} \cdot \branchwidth(G) \leq \mwd(g) \leq \branchwidth(G) + 1\).
\end{theorem}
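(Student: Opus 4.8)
The plan is to derive both inequalities by chaining the two propositions of this subsection with the translation lemmas of \Cref{sec:graphs}, exploiting the fact that the graph $G$ carries no sources, so that $g = \cospan{\emptyset}{}{G}{}{\emptyset}$ has empty boundary objects. The crucial simplification is that the additive and multiplicative ``penalty'' terms appearing in those imported results---namely the $\card{X}$ of \Cref{lemma:rec-branch-width-upper} and the $\card{A}, \card{B}$ of \Cref{prop:mwd-branch-width-lower-bound}---all vanish once $X = A = B = \emptyset$. This is precisely what collapses the two one-sided constructions into the clean bounds stated here.

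For the upper bound $\mwd(g) \leq \branchwidth(G) + 1$, I would start from an optimal branch decomposition $(Y,b)$ of $G$, so that $\decwidth(Y,b) = \branchwidth(G)$. Viewing $G$ as the graph with sources $\Gamma = (G, \emptyset)$, \Cref{lemma:rec-branch-width-upper} yields a recursive branch decomposition $\toRecursiveDec(Y,b)$ of $\Gamma$ of width at most $\decwidth(Y,b) + \card{\emptyset} = \branchwidth(G)$. Feeding this into \Cref{prop:mwd-branch-width-upper-bound} produces a monoidal decomposition $\bTomdec(\toRecursiveDec(Y,b)) \in \decset{g}$ of width at most $\branchwidth(G) + 1$, whence $\mwd(g) \leq \branchwidth(G) + 1$ by minimality.

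For the lower bound $\frac{1}{2} \cdot \branchwidth(G) \leq \mwd(g)$, I would dually start from an optimal monoidal decomposition $d \in \decset{g}$ with $\decwidth(d) = \mwd(g)$. Applying \Cref{prop:mwd-branch-width-lower-bound} to $h = g$ with the identity glueing map $\phi = \id$ (which trivially satisfies the glueing property, since it identifies no distinct vertices) gives a recursive branch decomposition $\mTobdec(d)$ of $(G, \emptyset)$ of width at most $2 \cdot \max \{\decwidth(d), 0, 0\} = 2 \cdot \mwd(g)$. Translating back with \Cref{lemma:rec-branch-width-lower} produces a branch decomposition $\fromRecursiveDec(\mTobdec(d))$ of $G$ of width at most $2 \cdot \mwd(g)$, so $\branchwidth(G) \leq 2 \cdot \mwd(g)$.

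Since the theorem is a direct corollary of the earlier results once the empty-boundary bookkeeping is handled, I do not expect any genuine obstacle at this stage: all the real difficulty lives in the two propositions themselves. The only points demanding care are confirming that the hypotheses of the imported statements genuinely specialise to $g = \cospan{\emptyset}{}{G}{}{\emptyset}$---that $(G,\emptyset)$ is a legitimate graph with sources for the translation lemmas, and that the choice $\phi = \id$ makes the object $\Gamma$ of \Cref{prop:mwd-branch-width-lower-bound} equal to $(G,\emptyset)$ rather than some proper quotient of $G$, which it does because $\image(\phi) = \vertices(G)$ and the two boundary images are empty.
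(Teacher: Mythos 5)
Your proposal is correct and is essentially the paper's own argument: the theorem is stated there as a summary of \Cref{prop:mwd-branch-width-upper-bound} and \Cref{prop:mwd-branch-width-lower-bound}, combined with the translations between branch decompositions and recursive branch decompositions (\Cref{lemma:rec-branch-width-upper} and \Cref{lemma:rec-branch-width-lower}), exactly as you chain them. Your observations that the $\card{X}$, $\card{A}$, $\card{B}$ penalties vanish for empty boundaries and that $\phi = \id$ trivially satisfies the glueing property are precisely the bookkeeping the paper leaves implicit.
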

%\begin{proof}
%  The result follows from~\Cref{prop:mwd-branch-width-lower-bound},~\Cref{prop:mwd-branch-width-upper-bound},~\Cref{lemma:rec-branch-width-lower} and~\Cref{lemma:rec-branch-width-upper}.
%\end{proof}

\section{Conclusion and future work}
We defined \emph{monoidal width}, a notion of complexity for morphisms in monoidal categories.
Restricting our attention to graphs, we showed that monoidal width and its variants are closely related to existing measures for graphs, namely branch width, tree width and path width. We believe that we can also recover other widths: e.g.\ we conjecture that monoidal path width in the prop of graphs~\cite{chantawibul2015compositionalgraphtheory,networkGamesCSL} is cut width, and monoidal width is rank width.
%In particular, we have shown that suitable restictions of monoidal width are bounded by these graph widths.

Future work will also explore other examples of monoidal widths.
For example, we will consider existing generalisations of graph widths in the setting of directed graphs, hypergraphs and relational structures: can they be recovered as instances of monoidal width by suitably changing the monoidal category?
% where we instantiate monoidal widths to a category of cospans of directed graphs and a category of cospans of hypergraphs.

%We conjecture that path width in the prop of matrices~\cite{bonchi2019graphicalaffinealgebra} would correspond to the rank; path width in the prop of graphs~\cite{chantawibul2015compositionalgraphtheory,networkGamesCSL} would correspond to cut width; and monoidal width would correspond to rank width in the prop of graphs.

We will also study connections between monoidal width and and algorithmic concerns.
Indeed, inspired by Courcelle's theorem, we are keen to establish general algorithmic results about computing on suitably ``recognisable'' families of arrows of monoidal categories with bounded monoidal width.

%Other future work will look for a functorial description of monoidal widths. The weight function that defines the width of a monoidal decomposition can probably be expressed as a functor from a prop of syntax.

\bibliography{ms}
\appendix
\section{Omitted proofs}
\subsection{Copy}\label{app:copy}

\begin{proof}[Proof of~\Cref{lemma:mwd-copy}]
  By induction on \(n\).

  If \(n = 0\), then we can define \(\copyMdec(d) \defn d\).

  Suppose that the statement is true for any \(f' \colon Y \tensor \overline{X} \tensor Z' \to W\).
  Let \(f \colon Y \tensor \overline{X} \tensor X_{n+1} \tensor Z \to W\).
  Then,
  \begin{align*}
    & \gamma_{\overline{X} \tensor X_{n+1}}(f)\\
    \explain{by definition}\\
    & \lemmamwdcopyProofFigOne\\
    \explain{by coherence of \(\cp\)}\\
    & \lemmamwdcopyProofFigTwo\\
    \explain{by naturality of \(\swap{}\) and hexagon equations}\\
    & \lemmamwdcopyProofFigThree
  \end{align*}
  Let \(\gamma_{\overline{X}}(f) \defn (\id{Y} \tensor \cp_{\overline{X}} \tensor \id{X_{n+1} \tensor Z}) \dcomp (\id{Y \tensor \overline{X}} \tensor \swap{\overline{X}, X_{n+1} \tensor Z}) \dcomp (f \tensor \id{\overline{X}})\) be the morphism in the dashed box.
  By the induction hypothesis, there is \(d'\) monoidal decomposition of \(\gamma_{\overline{X}}(f)\) such that \(\decwidth(\copyMdec(d)) \leq \max \{\decwidth(d), \nodeweight(Y) + \nodeweight(X_{n+1} \tensor Z) + (n+1) \cdot \max_{i = 1,\ldots,n} \nodeweight(X_i)\}\).

  Define \(\copyMdec(d)\) as shown below, using the monoidal decomposition \(d'\) of \(\gamma_{\overline{X}}(f)\).
  \begin{center}
    \lemmamwdcopyProofFigCuts{}
  \end{center}
  % \(\copyMdec(d) \defn \nodegenerator{\nodegenerator{\id{Y \tensor \overline{X}}}{\tensor}{\nodegenerator{\nodegenerator{\cp_{X_{n+1}}}{\tensor}{\id{Z}}}{\dcomp_{X_{n+1} \tensor X_{n+1} \tensor Z}}{\nodegenerator{\id{X_{n+1}}}{\tensor}{\swap{X_{n+1},Z}}}}}{\dcomp_{Y \tensor \overline{X} \tensor X_{n+1} \tensor Z \tensor X_{n+1}}}{\nodegenerator{\copyMdec'(d)}{\tensor}{\id{X_{n+1}}}}\).
  %\includedrawing{mwd-copy_page004}.
  Then, \(\copyMdec(d)\) is a monoidal decomposition of \(\gamma_{\overline{X} \tensor X_{n+1}}(f)\) because \(\gamma_{\overline{X} \tensor X_{n+1}}(f) = ((\id{Y \tensor \overline{X}}) \tensor ((\cp_{X_n+1} \tensor \id{Z}) \dcomp (\id{X_n+1} \tensor \swap{X_{n+1},Z}))) \dcomp (g' \tensor \id{X_{n+1}})\).
  Moreover,
  \begin{align*}
    & \decwidth(\copyMdec(d)) \\
    \explain{by definition}\\
    & \max \{\nodeweight(\id{Y \tensor \overline{X}}), \nodeweight(\cp_{X_{n+1}}), \nodeweight(\id{Z}), \nodeweight(\id{X_{n+1}}), \nodeweight(\swap{X_{n+1},Z}), \decwidth(d'),\\
    & \qquad \nodeweight(\dcompnode{Y \tensor \overline{X} \tensor Z \tensor X_{n+1}}), \nodeweight(\dcompnode{X_{n+1} \tensor Z \tensor X_{n+1}}) \}\\
    \explain[\leq]{by hypotheses on the weight function}\\
    & \max\{\nodeweight(\dcompnode{Y}) + \nodeweight(\dcompnode{Z}) + \nodeweight(\dcompnode{X_{n+1}}) + \sum_{i=1}^{n+1} \nodeweight(\dcompnode{X_i}), \decwidth(d')\}\\
    & \leq \\
    & \max\{\nodeweight(\dcompnode{Y}) + \nodeweight(\dcompnode{Z}) + (n+2) \cdot \max_{i=1, \ldots, n+1} \nodeweight(\dcompnode{X_i}), \decwidth(d')\} \\
    \explain[\leq]{by induction hypothesis}\\
    & \max\{\nodeweight(\dcompnode{Y}) + \nodeweight(\dcompnode{Z}) + (n+2) \cdot \max_{i=1, \ldots, n+1} \nodeweight(\dcompnode{X_i}), \decwidth(d), \\
    & \qquad \nodeweight(\dcompnode{Y}) + \nodeweight(\dcompnode{X_{n+1} \tensor Z}) + (n+1) \cdot \max_{i=1, \ldots, n} \nodeweight(\dcompnode{X_i})\} \\
    & = \\
    & \max\{\nodeweight(\dcompnode{Y}) + \nodeweight(\dcompnode{Z}) + (n+2) \cdot \max_{i=1, \ldots, n+1} \nodeweight(\dcompnode{X_i}), \decwidth(d)\}
  \end{align*}
\end{proof}

\subsection{Graphs and their decompositions}\label{app:graphs}

\begin{proof}[Proof of~\Cref{lemma:equivRecursiveTreeDec}]
  \((\implies)\) By induction on \(\card{\leaves(Y)}\).

      If \(\card{\leaves(Y)} = 1\), then \((Y,t) = \leafgenerator{V}\) and we can define \(\toRecursiveDec(Y,t) \defn \nodegenerator{\emptydec}{V}{\emptydec}\).

      If \(\card{\leaves(Y)} > 1\), let \(r\) be a vertex of \(Y\) such that \(X \subseteq t(r) \codefn V'\), which is given by hypothesis.
      Let \(v_1,\ldots,v_k\) be the neighbours of \(r\) in \(Y\) and let \(Y_i\) be the subtree of \(Y\) induced by \(v_i\) for \(i = 1,\ldots,k\).
      Let \(V_1 \defn \Union_{n \in \vertices(Y_{1})} t(n)\), \(E_1 \defn \{e \in E \suchthat \edgeends{e} \subseteq V_1\}\) and \(G_1 \defn \mathgraph{E_1}{V_1} \subseteq G\).
      Then, by \Cref{def:treeDec}, \((Y_1,t\restrto{Y_1})\) is a tree decomposition of \(G_1\) and \(X_1 \defn V' \intersection V_1 = V' \intersection t(v_1) \subseteq t(v_1)\).
      We can apply the induction hypothesis as \(\leaves(Y_1) \subsetneq \leaves(Y)\).
      Then, \(T_1 \defn \toRecursiveDec(Y_1,t\restrto{Y_1})\) is a recursive tree decomposition of \(\Gamma_1 \defn (G_1,X_1)\) and \(\decwidth(T_1) = \decwidth(Y_1,t\restrto{Y_1})\).
      Let \(V_2 \defn \Union_{i = 2,\ldots,k} \Union_{n \in \vertices(Y_i)} t(n)\), \(E_2 \defn \{e \in E \setminus E_1 \suchthat \edgeends{e} \subseteq V_2\}\), and \(X_2 \defn V_2 \intersection V'\).
      Let \(G_2 \defn \mathgraph{E_2}{V_2}\) and \((Y',t') \defn \nodegenerator{(Y_2,t\restrto{Y_2})}{X_2}{\cdots(Y_k,t\restrto{Y_k})}\) be the tree obtained by joining \((Y_2,t\restrto{Y_2}), \dots, (Y_k,t\restrto{Y_k})\) in a chain.
      Then, \((Y',t')\) is a tree decomposition of \(G_2\).
      We can apply the induction hypothesis as \(\leaves(Y') \subsetneq \leaves(Y)\).
      Then, \(T_2 \defn \toRecursiveDec(Y',t')\) is a recursive tree decomposition of \(\Gamma_2 \defn (G_2,X_2)\) with \(\decwidth(T_2) = \decwidth(Y',t')\).
      Define \(\toRecursiveDec(Y,t) \defn \nodegenerator{T_1}{V'}{T_2}\).
      Then, \(X \subseteq V'\), \(V' \union V_1 \union V_2 = \Union_{n \in \vertices(Y)} t(n) = V\), \(X_i \defn V_i \intersection V'\), \(V_1 \intersection V_2 \subseteq V'\) (by \Cref{def:treeDec}), \(E_1 \intersection E_2 = \emptyset\) (by construction), and \(\edgesetends{E \setminus (E_1 \disjointunion E_2)} \subseteq V'\) (by \Cref{def:treeDec}).
      Then, \(T\) is a recursive tree decomposition of \(\Gamma\).
      Moreover,
      \begin{align*}
        & \decwidth(\toRecursiveDec(Y,t))\\
        & \defn \max \{\card{V'}, \decwidth(T_1), \decwidth(T_2)\}\\
        & = \max \{\card{V'}, \decwidth(Y_1,t\restrto{Y_1}), \decwidth(Y',t')\} \\
        & \codefn \max \{\card{t(r)}, \max_{n \in \vertices(Y_1)} \card{t(n)}, \max_{n \in \vertices(Y')} \card{t'(n)}\}\\
        & = \max \{\max_{n \in \vertices(Y)}\card{t(n)}, \card{X_2}\}\\
        & = \max_{n \in \vertices(Y)}\card{t(n)}\\
        & \codefn \decwidth(Y,t)
      \end{align*}
    \((\co{\implies})\) By induction on \(T\).

      If \(T = \nodegenerator{\emptydec}{V}{\emptydec}\), then define \(\fromRecursiveDec(T) \defn \leafgenerator{V}\).

      If \(T = \nodegenerator{T_1}{V'}{T_2}\), then \(T_i\) is a recursive tree decomposition of \(\Gamma_i = (G_i,X_i)\) with \(G_i = \mathgraph{E_i}{V_i}\) such that \(X \subseteq V'\), \(V' \union V_1 \union V_2 = V\), \(X_i = V_i \intersection V'\), \(E_1 \intersection E_2 = \emptyset\), and \(\edgesetends{E \setminus (E_1 \disjointunion E_2)} \subseteq V'\).
      Then, \((Y_i,t_i) \defn \fromRecursiveDec(T_i)\) is a tree decomposition of \(G_i\) with \(\decwidth(Y_i,t_i) = \decwidth(T_i)\) by induction hypothesis.
      Let \(\fromRecursiveDec(T) \defn \nodegenerator{(Y_1,t_1)}{V'}{(Y_2,t_2)}\).
      Then, \(\fromRecursiveDec(T)\) satisfies the conditions for being a tree decomposition of \(G\).
      Moreover,
      \begin{align*}
        & \decwidth(\fromRecursiveDec(T)) \\
        & \defn \max_{n \in \vertices(Y)} \card{t(n)}\\
        & = \max \{\card{V'}, \decwidth(Y_1,t_1), \decwidth(Y_2,t_2)\}\\
        & = \max \{\card{V'}, \decwidth(T_1), \decwidth(T_2)\}\\
        & \codefn \decwidth(T)
      \end{align*}
\end{proof}

\begin{remark}
  In the case of monoidal path decompositions, the cost of compositions does not depend on the order in which we perform the compositions.
  Thus, when computing the width of a monoidal path decomposition of a morphism, we can consider the list of morphisms that corresponds to the decomposition.
\end{remark}

\begin{proof}[Proof of~\Cref{lemma:equivRecursivePathDec}]
  \((\implies)\) By induction on \(\pathlength(P)\).

      If \(\pathlength(P) = 1\), then \((P,p) = (V)\) and \(\toRecursiveDec(P,p) = (V)\).

      If \(\pathlength(P) = r\), then \((P,p) = (V_1, \ldots, V_r)\).
      Let \((P',p') \defn (V_2,\ldots,V_r)\), \(V' \defn V_2 \union \ldots \union V_r\), \(X' \defn V_1 \intersection V'\) and \(E' \defn \{e \in E \suchthat \edgeends{e} \subseteq V'\}\).
      Then, \(G' \defn \mathgraph{E'}{V'}\) is a subgraph of \(G\) such that \(X \subseteq V_1\) (by hypothesis), \(V_1 \union V' = \Union_i V_i = V\) (by \Cref{def:pathDec}), \(X' = V_1 \intersection V'\) (by construction), \(\edgesetends{E\setminus E'} \subseteq V_1\) (because \(E \setminus E' \subseteq E_1 \defn \{e \in E \suchthat \edgeends{e} \subseteq V_1\}\) by \Cref{def:pathDec}).
      Then, \((P',p')\) is a path decomposition of \(G'\) and \(X' \defn V_1 \intersection V' = V_1 \intersection V_2 \subseteq V_2\) by \Cref{def:pathDec}.
      Then, \(\toRecursiveDec(P',p')\) is a recursive path decomposition of \(\Gamma' = (G',X')\) with \(\decwidth(\toRecursiveDec(P',p')) = \decwidth(P',p')\) by induction hypothesis.
      Then, \(\toRecursiveDec(P,p) \defn (V_1,\toRecursiveDec(P',p'))\) is a recursive path decomposition of \(\Gamma\).
      Moreover,
      \begin{align*}
        & \decwidth(\toRecursiveDec(P,p)) \\
        & \defn \max \{\card{V_1},\decwidth(\toRecursiveDec(P',p'))\} \\
        & = \max \{\card{V_1},\decwidth(P',p')\} \\
        & \defn \max_{i = 1,\ldots,r} \card{V_i}\\
        & \codefn \decwidth(P,p)
      \end{align*}
    \((\co{\implies})\) By induction on \(T\).

      If \(T = (V)\), then \(\fromRecursiveDec(T) \defn (V)\).

      If \(T = (V_1,T')\) with \(T'\) recursive path decomposition of \(\Gamma' = (\mathgraph{E'}{V'},X')\).
      Then, \(X \subseteq V_1\), \(V_1 \union V' = V\), \(X' = V_1 \intersection V'\) and \(\edgesetends{E\setminus E'} \subseteq V_1\) by \Cref{def:recursivePathDec}.
      By induction hypothesis, \(\fromRecursiveDec(T') = (V_2,\ldots,V_r)\) with \(V' = V_2 \union \ldots \union V_r\), \(\forall e \in E' \ \exists i \in \{2,\ldots,r\} \ \edgeends{e} \subseteq V_i\), \(\forall i,j,k \in \{2,\ldots,r\} \ i \leq j \leq k \implies V_i \intersection V_k \subseteq V_j\), and \(\decwidth(T') = \decwidth(\fromRecursiveDec(T'))\).
      Then, \(V_1 \union \ldots \union V_r = V_1 \union V' = V\), \(\forall e \in E \ \exists i \in \{1,\ldots,r\} \ \edgeends{e} \subseteq V_i\) and \(\forall i,j,k \in \{1,\ldots,r\} \ i \leq j \leq k \implies V_i \intersection V_k \subseteq V_j\).
      Then, \(\fromRecursiveDec(T) \defn (V_1,\ldots,V_r)\) is a path decomposition of \(G\) and \(X \subseteq V_1\).
      Moreover,
      \begin{align*}
        & \decwidth(\fromRecursiveDec(T)) \\
        & \defn \max_{i = 1,\ldots,r} \card{V_i}\\
        & \codefn \max \{\card{V_1}, \decwidth(\fromRecursiveDec(T'))\}\\
        & = \max \{\card{V_1},\decwidth(T')\}\\
        & \codefn \decwidth(T)
      \end{align*}
\end{proof}

\begin{proof}[Proof of~\Cref{lemma:rec-bwd-globally}]
  Proceed by induction on \(T\).

  If \(T = \emptydec\), then \(T_0 = \emptydec\) and we are done.

  If \(T = \nodegenerator{T_1}{\Gamma}{T_2}\), then either \(T_0 \subtreeq T_1\), \(T_0 \subtreeq T_2\) or \(T_0 = T\).
  If \(T_0 = T\), then \(\boundary(\labelling(T_0)) = X\) and we are done.
  Suppose that \(T_0 \subtreeq T_1\).
  Then, by applying the induction hypothesis and using the fact that \(\labelling(T_0) \subgrapheq \labelling(T_1)\), we show
  \begin{align*}
    & \boundary(\labelling(T_0)) \\
    & = \vertices(\labelling(T_0)) \intersection (\boundary(\labelling(T_1))) \union \Union_{T' \subtreeq T_1, T' \ngtrless T_0} \vertices(\labelling(T'))\\
    & = \vertices(\labelling(T_0)) \intersection ((\vertices(\labelling(T_1)) \intersection (\vertices(\labelling(T_2)) \union X)) \union \Union_{T' \subtreeq T_1, T' \ngtrless T_0} \vertices(\labelling(T'))\\
    & = \vertices(\labelling(T_0)) \intersection (\vertices(\labelling(T_2)) \union X \union \Union_{T' \subtreeq T_1, T' \ngtrless T_0} \vertices(\labelling(T'))\\
    & = \vertices(\labelling(T_0)) \intersection (X \union \Union_{T' \subtreeq T, T' \ngtrless T_0} \vertices(\labelling(T'))
  \end{align*}
  We proceed analogously if \(T_0 \subtreeq T_2\).
\end{proof}

\begin{proof}[Proof of~\Cref{lemma:rec-branch-width-lower}]
  A binary tree is, in particular, a subcubic tree.
  Then, we can define \(Y\) to be the unlabelled tree underlying \(T\).
  The label of a leaf \(l\) of \(T\) is a subgraph of \(\Gamma\) with one edge \(e_l\).
  Then, there is a bijection \(b \colon \leaves(T) \to \edges(G)\) such that \(b(l) \defn e_l\).
  Then, \((Y,b)\) is a branch decomposition of \(G\) and we can define \(\fromRecursiveDec(T) \defn (Y,b)\).

  By definition, \(e \in \edges(Y)\) if and only if \(e \in \edges(T)\).
  Let \(\{v,w\} = \edgeends{e}\) with \(v\) parent of \(w\) in \(T\) and let \(T_w\) the subtree of \(T\) with root \(w\).
  Let \(\{E_v,E_w\}\) be the (non-trivial) partition of \(E\) induced by \(e\).
  Then, \(E_w = \edges(\labelling(T_{w}))\) and \(E_v = \Union_{T' \ngtrless T_{w}} \edges(\labelling(T'))\).
  Then, \(\edgesetends{E_w} \subseteq \vertices(\labelling(T_{w}))\) and \(\edgesetends{E_v} \subseteq \Union_{T' \ngtrless T_{w}} \vertices(\labelling(T'))\).
  Using these inclusions and applying \Cref{lemma:rec-bwd-globally},
  \begin{align*}
    & \edgeorder(e) \\
    & \defn \card{\edgesetends{E_w} \intersection \edgesetends{E_v}} \\
    & \leq \card{\vertices(\labelling(T_{w})) \intersection \Union_{T' \ngtrless T_{w}} \vertices(\labelling(T'))} \\
    & \leq \card{\vertices(\labelling(T_{w})) \intersection (X \union \Union_{T' \ngtrless T_{w}} \vertices(\labelling(T')))} \\
    & = \card{\boundary(\labelling(T_{w}))}
  \end{align*}
  Then,
  \begin{align*}
    & \decwidth(Y,b)\\
    & \defn \max_{e \in \edges(Y)} \edgeorder(e)\\
    & \leq \max_{T' < T} \card{\boundary(\labelling(T'))}\\
    & \leq \max_{T' \leq T} \card{\boundary(\labelling(T'))}\\
    & = \decwidth(T)
  \end{align*}
\end{proof}

\begin{proof}[Proof of~\Cref{lemma:rec-branch-width-upper}]
  Proceed by induction on \(\card{\edges(Y)}\).
  
  If \(Y\) has no edges, then either \(G\) has no edges and \((Y,b) = \emptydec\) or \(G\) has only one edge \(e_l\) and \((Y,b) = \leafgenerator{e_l}\).
  In either case, define \(\toRecursiveDec(Y,b) \defn \leafgenerator{\Gamma}\) and \(\decwidth(\toRecursiveDec(Y,b)) \defn \card{X} \leq \decwidth(Y,b) + \card{X}\).

  If \(Y\) has at least one edge \(e\), then \(Y = Y_1 \overset{e}{\text{---}} Y_2\) with \(Y_i\) a subcubic tree.
  Let \(E_{i} = b(\leaves(Y_{i}))\) be the sets of edges of \(G\) indicated by the leaves of \(Y_{i}\).
  Then, \(E_{1} \disjointunion E_{2} = E\).
  By induction hypothesis, there are recursive branch decompositions \(T_{i} \defn \toRecursiveDec(Y_{i},b_{i})\) of \(\Gamma_{i} = (G_{i}, X_{i})\), where \(V_{1} \defn \edgesetends{E_{1}}\), \(V_{2} \defn \edgesetends{E_{2}} \union (V \setminus V_{1})\), \(X_{i} \defn (V_{1} \intersection V_{2}) \union (V_{i} \intersection X)\) and \(G_{i} \defn \mathgraph{E_{i}}{V_{i}}\).
  Moreover, \(\decwidth(T_{i}) \leq \decwidth(Y_{i}, b_{i}) + \card{X_{i}}\).
  Then, the tree \(\toRecursiveDec(Y,b) \defn \nodegenerator{T_1}{\Gamma}{T_2}\) is a recursive branch decomposition of \(\Gamma\) and, by applying~\Cref{lemma:rec-bwd-globally},
  \begin{align*}
    & \decwidth(\toRecursiveDec(Y,b)) \\
    & \defn \max \{\decwidth(T_{1}), \card{X}, \decwidth(T_{2})\}\\
    & = \max_{T' \leq T} \card{\boundary(\labelling(T'))}\\
    & \leq \max_{T' \leq T} \card{\vertices(\labelling(T')) \intersection \edgesetends{E \setminus \edges(\labelling(T'))}} + \card{X} \\
    & = \max_{e \in \edges(Y)} \edgeorder(e) + \card{X}\\
    & \codefn \decwidth(Y,b) + \card{X}
  \end{align*}
\end{proof}

\begin{definition}\label{def:category-graphs}
  The category \(\UGraph\) has (undirected) graphs as objects.
  Given two graphs \(G = \mathgraph{E}{V}\) and \(H = \mathgraph{F}{W}\), a morphism \(\alpha \colon G \to H\) is given by a pair of functions \(\alpha_{V} \colon V \to W\) and \(\alpha_{E} \colon E \to F\) s.t.\ \(\edgesetends[H]{\alpha_{E}(e)} = \alpha_{V}(\edgesetends[G]{e})\).
  \begin{center}
    \begin{tikzcd}
      {E} \arrow[r, "{\alpha_E}"] \arrow[d, "\edgeendsfun_G"'] & {F} \arrow[d, "{\edgeendsfun_H}"] \\
      {\parti_2(V)} \arrow[r, "{\parti_2(\alpha_V)}"]                 & {\parti_2(W)}
    \end{tikzcd}
  \end{center}
\end{definition}

It is easy to show that this category has all colimits, computed pointwise\footnote{Given a pair of functions, \(f \colon A \to B\) and \(g \colon C \to B\), with common codomain, the pushout of \(f\) and \(g\) is given by the set \((B \disjointunion C)/\sim\), which is the disjoint union of \(B\) and \(C\) quotiented by the equivalence relation generated by \(f(a) \sim g(a)\). Pushouts of morphisms of graphs are computed componentwise.}.

\begin{lemma}\label{lemma:colimits-graph}
  The category \(\UGraph\) has all colimits and they are computed pointwise.
\end{lemma}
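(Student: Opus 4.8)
The plan is to recognise $\UGraph$ as a comma category over $\cat{Set}$ and to transport colimits from $\cat{Set}$, while simultaneously making the ``pointwise'' claim explicit by carrying out the construction by hand. First I would note that $\parti_2$ is a functor $\cat{Set} \to \cat{Set}$, sending $f \colon V \to W$ to the map $\parti_2(f)$ acting by $\{v\} \mapsto \{f(v)\}$ and $\{v,w\} \mapsto \{f(v), f(w)\}$ (a singleton when $f(v) = f(w)$). With this, $\UGraph$ is exactly the comma category $(\id{\cat{Set}} \downarrow \parti_2)$: its objects are triples $(E, V, \edgeendsfun \colon E \to \parti_2(V))$ and its morphisms are the commuting squares of~\Cref{def:category-graphs}.

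For the concrete construction, given a diagram $D \colon J \to \UGraph$ with $D(j) = \mathgraph{E_j}{V_j}$ and edge maps $\edgeendsfun_j$, I would first form the colimits $V$ of the vertex sets $V_j$ and $E$ of the edge sets $E_j$ in $\cat{Set}$, with coprojections $\iota^V_j$ and $\iota^E_j$. The point is then to equip $\mathgraph{E}{V}$ with a canonical edge map. I would observe that the composites $\parti_2(\iota^V_j) \circ \edgeendsfun_j \colon E_j \to \parti_2(V)$ form a cocone under $j \mapsto E_j$; the required compatibility, for $f \colon j \to j'$ with $D(f) = (\phi_E, \phi_V)$, is the chain $\parti_2(\iota^V_{j'}) \circ \edgeendsfun_{j'} \circ \phi_E = \parti_2(\iota^V_{j'}) \circ \parti_2(\phi_V) \circ \edgeendsfun_j = \parti_2(\iota^V_{j'} \circ \phi_V) \circ \edgeendsfun_j = \parti_2(\iota^V_j) \circ \edgeendsfun_j$, using that $D(f)$ is a graph morphism, functoriality of $\parti_2$, and the vertex cocone identity $\iota^V_{j'} \circ \phi_V = \iota^V_j$. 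By the universal property of $E$ this factors through a unique $\edgeendsfun \colon E \to \parti_2(V)$, and the factoring equation is precisely the statement that each coprojection $(\iota^E_j, \iota^V_j)$ is a morphism of $\UGraph$.

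Next I would verify the universal property. Given any graph $H = \mathgraph{F}{W}$ receiving a cocone of graph morphisms $(a^E_j, a^V_j)$, the universal properties of $E$ and $V$ in $\cat{Set}$ produce unique functions $u_E \colon E \to F$ and $u_V \colon V \to W$ compatible with the coprojections; I would then check that $(u_E, u_V)$ is itself a graph morphism, i.e.\ $\parti_2(u_V) \circ \edgeendsfun = \edgeendsfun_H \circ u_E$. Both sides precompose with every $\iota^E_j$ to the same map, again by functoriality of $\parti_2$ together with the defining equations, so they agree by uniqueness in the universal property of $E$. Uniqueness of $(u_E, u_V)$ is inherited componentwise, establishing that $\mathgraph{E}{V}$ is the colimit and that colimits are computed separately on edges and on vertices.

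The main thing to get right, and the conceptual heart of the argument, is that we never require $\parti_2$ to \emph{preserve} colimits: the edge map is defined solely through the universal property of the edge colimit $E$ mapping into the fixed target $\parti_2(V)$. In comma-category terms this is precisely why the standard result applies: $(F \downarrow G)$ inherits colimits of a given shape whenever the relevant domains have them and $F$ preserves them, and here $F = \id{\cat{Set}}$ preserves all colimits, so only functoriality of $G = \parti_2$ is needed. Hence $\UGraph$ is cocomplete with pointwise colimits.
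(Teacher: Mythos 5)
Your proof is correct and follows essentially the same approach as the paper's: compute the edge and vertex colimits separately in \(\Set\), induce the boundary map into \(\parti_2\) of the vertex colimit via the universal property of the edge colimit (using only functoriality of \(\parti_2\), never preservation of colimits), and verify the universal property componentwise. The comma-category framing \(\UGraph \cong (\id{\Set} \downarrow \parti_2)\) is a nice conceptual gloss, but the construction you then carry out by hand is exactly the paper's argument.
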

\begin{proof}
  Let \(D \colon \cat{J} \to \UGraph\) be a diagram in \(\UGraph\).
  Let \(U_{E} \colon \UGraph \to \Set\) and \(U_{V} \colon \UGraph \to \Set\) be the functors that associate the edges, resp. vertices, component to graphs and graph morphisms.
  The category \(\Set\) has all colimits, thus there are \(\colim(D \dcomp U_{E})\) and \(\colim(D \dcomp U_{V})\).
  Let \(c_{i} \colon U_{V}(D(i)) \to \colim(D \dcomp U_{V})\) and \(d_{i} \colon U_{E}(D(i)) \to \colim(D \dcomp U_{E})\) be the inclusions given by the colimits.
  For every \(f \colon i \to j\) in \(\cat{J}\), we have that \(f_{E} \dcomp \edgeendsfun_{j} = \edgeendsfun_{i} \dcomp \parti_{2}(f_{V})\) by definition of graph morphism (\Cref{def:category-graphs}), and \(\parti_{2}(f_{V}) \dcomp \parti_{2}(c_{j}) = \parti_{2}(c_{i})\), by definition of colimit.
  This shows that \(\parti_{2}(\colim(D \dcomp U_{V}))\) is a cocone over \(D \dcomp U_{E}\) with morphisms given by \(\edgeendsfun_{i} \dcomp \parti_{2}(c_{i})\).
  Then, there is a unique morphism \(\edgeendsfun \colon \colim(D \dcomp U_{E}) \to \colim(D \dcomp U_{V})\) that commutes with the cocone morphisms.
  This shows that the pairs \((c_{i},d_{i})\) with the graph defined by \((\colim(D \dcomp U_{E}), \colim(D \dcomp U_{V}), \edgeendsfun)\) form a cocone over \(D\) in \(\UGraph\).
  This cocone is initial because its components are so.
\end{proof}
In particular, coproducts and pushouts exist, and the following is well-known.
\begin{proposition}
  Let \(\cat{C}\) be a category with finite colimits.
  Then there is a symmetric monoidal category \(\catCospan{\cat{C}}\) with the same objects as \(\cat{C}\). Morphisms \(A \to B\) are (isomorphism classes of) \emph{cospans}: pairs $A \xrightarrow{f} C \xleftarrow{g} B$ of $\cat{C}$ morphisms with the same codomain.
  Composition is computed by pushout in \(\cat{C}\) and monoidal product by the coproduct of \(\cat{C}\).
\end{proposition}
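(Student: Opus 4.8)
The plan is to build \(\catCospan{\cat{C}}\) by hand and verify the symmetric monoidal axioms, pushing as much as possible onto universal properties so that coherence becomes automatic. First I would fix the data. Objects are those of \(\cat{C}\); a morphism \(A \to B\) is an isomorphism class of cospans \(A \xrightarrow{f} C \xleftarrow{g} B\), where \((f,C,g)\) and \((f',C',g')\) are identified whenever there is an isomorphism \(\phi \colon C \to C'\) with \(f \dcomp \phi = f'\) and \(g \dcomp \phi = g'\); the identity on \(A\) is the class of \(A \xrightarrow{\id{A}} A \xleftarrow{\id{A}} A\); and the composite of \(A \xrightarrow{f} C \xleftarrow{g} B\) with \(B \xrightarrow{h} D \xleftarrow{k} E\) is formed by taking the pushout \(P\) of the span \(C \xleftarrow{g} B \xrightarrow{h} D\), with coprojections \(\iota_C \colon C \to P\) and \(\iota_D \colon D \to P\), and declaring the composite to be \(A \xrightarrow{f \dcomp \iota_C} P \xleftarrow{k \dcomp \iota_D} E\).

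Next I would check the category laws. Since \(\cat{C}\) has finite colimits the required pushout exists, and, being a colimit, it is determined up to unique isomorphism; I would verify that replacing either input cospan by an isomorphic representative yields an isomorphic pushout, so that composition descends to isomorphism classes. The unit laws are immediate, since the pushout of any \(g \colon B \to C\) along \(\id{B}\) is \(C\) with identity coprojection. \textbf{The main obstacle is associativity.} Here I would appeal to the pasting lemma for pushouts: composing three cospans in either bracketing computes a colimit of the same diagram \(C \leftarrow B \rightarrow D \leftarrow B' \rightarrow E\), built either as a pushout of pushouts on the left or on the right. Both constructions present this colimit, so they agree up to the canonical comparison isomorphism, and the two composites are therefore equal as isomorphism classes.

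For the monoidal structure I would transport the coproduct of \(\cat{C}\). On objects set \(A \tensor B \defn A + B\) and take the initial object as the unit; on morphisms set \([f,C,g] \tensor [f',C',g'] \defn [f + f',\, C + C',\, g + g']\), using functoriality of the coproduct. That \(\tensor\) is a bifunctor compatible with composition, i.e.\ the interchange law \((d_1 \dcomp d_2) \tensor (e_1 \dcomp e_2) = (d_1 \tensor e_1) \dcomp (d_2 \tensor e_2)\), follows because colimits commute with colimits: the coproduct of two pushouts is the pushout of the coproducts. The associator, unitors and symmetry are then inherited from the canonical coherence isomorphisms making \((\cat{C},+,0)\) symmetric monoidal, regarded as invertible cospans with identity right legs. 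Finally, I expect the pentagon, triangle and hexagon axioms to hold for free: the assignment \(f \mapsto [A \xrightarrow{f} B \xleftarrow{\id{B}} B]\) is an identity-on-objects functor \(\cat{C} \to \catCospan{\cat{C}}\) sending the structure isomorphisms of \((\cat{C},+,0)\) to those of \(\catCospan{\cat{C}}\), so each coherence diagram is the image of the corresponding commuting diagram in \(\cat{C}\).
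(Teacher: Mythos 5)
The paper offers no proof of this proposition at all --- it is stated immediately after the colimit lemma for \(\UGraph\) with the remark that it ``is well-known'' --- so your proposal can only be measured against the standard argument, which is essentially what you reconstruct. The core of your proof is sound: composition by pushout descends to isomorphism classes because pushouts are unique up to canonical isomorphism; the unit laws hold because a pushout along an identity is trivial; associativity holds because both bracketings compute the colimit of the same zigzag \(C_1 \leftarrow B \rightarrow C_2 \leftarrow D \rightarrow C_3\) (your appeal to ``pasting'' is really an iterated-colimit decomposition, but the argument is standard and correct); and bifunctoriality of \(\tensor\), i.e.\ interchange, follows since \(+ \colon \cat{C} \times \cat{C} \to \cat{C}\) is left adjoint to the diagonal and therefore preserves pushouts, so the coproduct of two pushouts is the pushout of the coproducts.

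There is, however, one genuine gap, in the final step. You transport \(\alpha\), \(\lambda\), \(\rho\) and the symmetry along the identity-on-objects functor \(E \colon \cat{C} \to \catCospan{\cat{C}}\), \(f \mapsto \cospan{A}{f}{B}{\id{B}}{B}\), and argue that the pentagon, triangle and hexagon axioms hold because each is the \(E\)-image of a commuting diagram in \(\cat{C}\). That argument does dispose of the \emph{axioms}, but it silently presupposes that the transported maps are \emph{natural} in \(\catCospan{\cat{C}}\) --- and the image of a natural transformation under a functor is only guaranteed to be natural with respect to morphisms in that functor's image, not with respect to arbitrary cospans. Naturality against a general cospan is an additional check, not a freebie. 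It does hold, and the fix is short: since a pushout along an isomorphism is trivial, composing a cospan \(\cospan{A}{f}{C}{g}{B}\) with \(E(\phi)\), \(\phi\) invertible, on either side merely composes the corresponding leg with \(\phi\) or \(\phi^{-1}\) inside \(\cat{C}\); the naturality square for a general cospan therefore reduces to the two \(\cat{C}\)-naturality squares for its legs \(f\) and \(g\), with the comparison isomorphism between the two apexes being the structure isomorphism of \(\cat{C}\) evaluated at the apexes. Equivalently, one can note that every cospan factors as \(E(f) \dcomp E'(g)\), where \(E'\) is the contravariant embedding \(g \mapsto \cospan{C}{\id{C}}{C}{g}{B}\), and verify naturality only on these two classes of generators. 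With that verification added, your proof is complete.
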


\subsection{Monoidal tree width}\label{app:mon-tree-width}

\begin{proof}[Proof of~\Cref{prop:treeDecToCatTreeDec}]
  Proceed by induction on \(T\).

  If the tree has only one node \(T = (V)\), then define \(\tTomdec(T) \defn (g)\) and \(\decwidth(\tTomdec(T)) \defn \card{V} \codefn \decwidth(T)\).

  If \(T = \nodegenerator{T_1}{W}{T_2}\),
  by \Cref{def:recursiveTreeDec}, \(T_i\) is a recursive tree decomposition of \(\Gamma_i = (G_i,X_i)\), where \(G_i = \mathgraph{E_i}{V_i} \subseteq G\), such that \(X \subseteq W \subseteq V\), \(V_1 \intersection V_2 \subseteq W\), \(V_1 \union V_2 \union W = V\), \(X_i = V_i \intersection W\), \(E_1 \intersection E_2 = \emptyset\) and \(\edgesetends{E \setminus (E_1 \disjointunion E_2)} \subseteq W\).
  By induction hypothesis, there are monoidal tree decompositions \(\tTomdec(T_i)\) of \(g_i \defn \cospan{X_i}{\inclusion}{G_i}{}{\emptyset}\) such that \(\decwidth(\tTomdec(T_i)) \leq 2 \cdot \decwidth(T_i)\).
  Let \(F \defn E \setminus (E_1 \disjointunion E_2)\), \(H \defn \mathgraph{F}{W}\) and \(h \defn \cospan{X}{\inclusion}{H}{}{X_1 \union X_2}\).
  Let \(b \defn \id{X_1 \setminus X_2} \tensor \cp_{X_1 \intersection X_2} \tensor \id{X_2 \setminus X_1} \colon X_1 \union X_2 \to X_1 + X_2\).
  Then, \(g = h \dcomp_{X_1 \union X_2} b \dcomp_{X_1 + X_2} (g_1 \tensor g_2)\).
  Then, \(\tTomdec(T) \defn \nodegenerator{h}{\dcomp_{X_1 \union X_2}}{\nodegenerator{b}{\dcomp_{X_1 + X_2}}{\nodegenerator{\tTomdec(T_1)}{\tensor}{\tTomdec(T_2)}}}\) is a monoidal tree decomposition of \(g\):
  \begin{center}
    \mwdTwdUpperProofFig{}
  \end{center}
  We compute the width of \(\tTomdec(T)\).
  \begin{align*}
    & \decwidth(\tTomdec(T)) \\
    & \defn \max \{\nodeweight(h), \nodeweight(\dcompnode{X_1 \union X_2}), \nodeweight(b), \nodeweight(\dcompnode{X_1 + X_2}), \decwidth(\tTomdec(T_1)), \decwidth(\tTomdec(T_2))\}\\
    & \defn \max \{\card{W},\card{X_1 \union X_2}, \card{X_1} + \card{X_2}, \decwidth(\tTomdec(T_1)), \decwidth(\tTomdec(T_2))\} \\
    & \leq \max \{2 \cdot \card{W}, 2 \cdot \decwidth(T_1),2 \cdot \decwidth(T_2)\} \\
    & = 2 \cdot \max \{\card{W}, \decwidth(T_1), \decwidth(T_2)\} \\
    & \codefn 2 \cdot \decwidth(T)
  \end{align*}
\end{proof}

\begin{proof}[Proof of~\Cref{lemma:episPreserveTreeDec}]
  Let \(G = \mathgraph{E}{V}\) and \(H = \mathgraph{F}{W}\).
  Proceed by induction on \(T\).

  If the tree has only one node \(T = (V)\), then, we define \(\epiTodec{\alpha}(T) \defn (W)\) and
  \(\decwidth(\epiTodec{\alpha}(T)) \defn \card{W} \leq \card{V} \codefn \decwidth(T)\) because \(\alpha\) is an epimorphism.

  If the tree is not a leaf, \(T = \nodegenerator{T_1}{V'}{T_2}\), then \(V' \subseteq V\), \(T_i\) is a recursive tree decomposition of \(\Gamma_{i} = (G_{i},X_{i})\), with \(G_i = \mathgraph{E_i}{V_i} \subseteq G\) and \(X_{i} = V_{i} \intersection V'\), such that \(V' \union V_1 \union V_2 = V\), \(V_1 \intersection V_2 \subseteq V'\) and \(\edgesetends{E \setminus (E_1 \union E_2)} \subseteq V'\).
  Let \(G' \defn \mathgraph{E \setminus (E_1 \union E_2)}{V'}\).
  Then, \(\alpha \colon G \to H\) restricts to \(\alpha' \colon G' \to H'\), \(\alpha_1 \colon G_1 \to H_1\) and \(\alpha_2 \colon G_2 \to H_2\) graph epimorphisms for some \(H' = \mathgraph{F'}{W'}\), \(H_1 = \mathgraph{F_1}{W_1}\) and \(H_2 = \mathgraph{F_2}{W_2}\) subgraphs of \(H\). They, moreover, satisfy that, if \(\alpha_{iV}(v) = \alpha_{iV}(w)\), then there is a subtree \(T'\) of \(T_{i}\) with \(v,w \in \labelling_{i}(T')\) because \(\alpha\) satisfies this condition.
  Then \(W' \union W_1 \union W_2 = W\), \(W_1 \intersection W_2 \subseteq W\) and \(\edgesetends{F\setminus (F_1 \union F_2)} \subseteq W'\) because \(\alpha_V\) and \(\alpha_E\) surjective, and, if \(\alpha_{V}(v) = \alpha_{V}(w)\), then there is a subtree \(T'\) of \(T_{i}\) with \(v,w \in \labelling_{i}(T')\).
  By induction there are recursive tree decompositions \(\epiTodec{\alpha_i}(T_i)\) of \((H_i,\alpha_{i}(X_{i}))\) such that \(\decwidth(\epiTodec{\alpha_i}(T_i)) \leq \decwidth(T_i)\).
  Then there is a recursive tree decomposition of \((H,Y)\) given by \(\epiTodec{\alpha}(T) \defn \nodegenerator{\epiTodec{\alpha_1}(T_1)}{W'}{\epiTodec{\alpha_2}(T_2)}\).

  We compute the width of \(\epiTodec{\alpha}(T)\).
  \begin{align*}
  & \decwidth(\epiTodec{\alpha}(T)) \\
  & \defn \max\{\card{W'},\decwidth(\epiTodec{\alpha_1}(T_1)),\decwidth(\epiTodec{\alpha_2}(T_2))\}\\
  & \leq \max \{\card{W'},\decwidth(T_1),\decwidth(T_2)\}\\
  & \leq \max \{\card{V'},\decwidth(T_1),\decwidth(T_2)\}\\
  & \codefn \decwidth(T)
  \end{align*}
\end{proof}

\begin{proof}[Proof of~\Cref{prop:catTreeDecToTreeDec}]
  Let \(G = \mathgraph{E}{V}\). Proceed by induction on \(d\).

  If \(d = \leafgenerator{g}\), then \(\mTotdec(d) \defn \leafgenerator{V}\) and \(\decwidth(\mTotdec(d)) = \card{V} = \decwidth(d) \leq \max \{\decwidth(d), \card{\image(\partial)}\}\).

  If \(d = \nodegenerator{\leafgenerator{h_1}}{\dcomp_Y}{d_2}\), then \(h_1 = \cospan{X}{\partial_1}{H_1}{}{Y} \in \decgenerators\), \(d_2\) is a monoidal tree decomposition of \(h = \cospan{Y}{\partial_W}{H}{}{\emptyset}\) and \(g = h_1 \dcomp_Y h\).
  By induction, there is a recursive tree decomposition \(\mTotdec(d_2)\) of \((H, \image(\partial_{W}))\) with \(\decwidth(\mTotdec(d_2)) \leq \max \{\decwidth(d_2), \card{\image (\partial_W)}\}\).
  By \Cref{lemma:episFromComposition}, there are \(\alpha_2 \colon H \to G_2\) and \(\alpha_1 \colon H_1 \to G_1\) for some \(G_1 = \mathgraph{E_1}{V_1}\) and \(G_2 = \mathgraph{E_2}{V_2}\) subgraphs of \(G\).
  All the vertices of \(H\) that are identified by \(\alpha_{2}\) need to be in \(\image(\partial_{W})\).
  Thus, \(\alpha_{2}\) satisfies the hypothesis of~\Cref{lemma:episPreserveTreeDec}.
  We can, then, apply this lemma to obtain a recursive tree decomposition \(\epiTodec{\alpha_2}(\mTotdec(d_2))\) of \((G_2,\image(\partial_{W} \dcomp \alpha_{2}))\) such that \(\decwidth(\epiTodec{\alpha_2}(\mTotdec(d_2))) \leq \decwidth(\mTotdec(d_2))\).
  Moreover, \(\image(\partial) \subseteq V_1\), \(V' \defn \image(\partial) \union (V_1 \intersection V_2)\), \(X_i \defn V_i \intersection V'\) and \(E = E_1 \disjointunion E_2\).
  Then, \(\mTotdec(d) \defn \nodegenerator{V_1}{V'}{\epiTodec{\alpha_2}(\mTotdec(d_2))}\) is a tree decomposition of \(\Gamma\).

  We compute the width of \(\mTotdec(d)\).
  \begin{align*}
  & \decwidth(\mTotdec(d)) \\
  & \defn \max \{\card{V_1},\card{V'},\decwidth(\epiTodec{\alpha_2}(\mTotdec(d_2)))\}\\
  & = \max \{\card{V_1},\decwidth(\epiTodec{\alpha_2}(\mTotdec(d_2)))\}\\
  & \leq \max \{\card{V_1},\decwidth(\mTotdec(d_2))\}\\
  & \leq \max \{\card{V_1},\decwidth(d_2), \card{\image(\partial_W)}\}\\
  & \leq \max \{\card{V_1}, \card{Y}, \decwidth(d_2)\}\\
  & \codefn \decwidth(d)\\
  & \leq \max \{\decwidth(d),\card{\image(\partial)}\}
  \end{align*}

  If \(d = \nodegenerator{d_1}{\tensor}{d_2}\), then \(d_i\) are monoidal tree decompositions of \(g_i = \cospan{X_i}{\partial_i}{G_i}{}{\emptyset}\) and \(g = g_1 \tensor g_2\).
  Let \(G_i = \mathgraph{E_i}{V_i}\).
  Then, \(V = V_1 \disjointunion V_2\), \(E = E_1 \disjointunion E_2\) and \(G_1, G_2 \subseteq G\).
  By induction, there are recursive tree decompositions \(\mTotdec(d_i)\) of \((G_i,\image(\partial_{i}))\) such that \(\decwidth(\mTotdec(d_i)) \leq \max \{\decwidth(d_i), \card{\image(\partial_i)}\}\).
  Then, \(\mTotdec(d) \defn \nodegenerator{\mTotdec(d_1)}{\image(\partial)}{\mTotdec(d_2)}\) is a tree decomposition of \(\Gamma\) because \(\image(\partial) \union V_1 \union V_2 = V\), \(V_1 \intersection V_2 = \emptyset\) and \(E = E_1 \disjointunion E_2\).

  We compute the width of \(\mTotdec(d)\).
  \begin{align*}
  & \decwidth(\mTotdec(d))\\
  & \defn \max \{\card{\image(\partial)}, \decwidth(\mTotdec(d_1)), \decwidth(\mTotdec(d_2))\} \\
  & \leq \max \{\card{\image(\partial)}, \decwidth(d_1), \decwidth(d_2)\} \\
  & \codefn \max \{\decwidth(d),\card{\image(\partial)}\}
  \end{align*}
\end{proof}

\subsection{Monoidal path width}\label{app:mon-path-width}

\begin{proof}[Proof of~\Cref{prop:pathDecToCatPathDec}]
  Let \(G = \mathgraph{E}{V}\).
  Proceed by induction on \(T\).

  If \(T = (V)\), we can define \(\pTomdec(T) \defn (g)\) and compute its width to be \(\decwidth(\pTomdec(T)) \defn \card{V} \codefn \decwidth(T)\).

  If \(T = (V_1,T')\), then \(V_1 \subseteq V\) and \(T'\) is a recursive path decomposition of \(\Gamma' = (\mathgraph{E'}{V'},X')\) subgraph of \(\Gamma\) such that \(X \subseteq V_1\), \(V_1 \union V' = V\), \(X' = V_1 \intersection V'\) and \(\edgesetends{E \setminus E'} \subseteq V_1\).
  Let \(G' \defn \mathgraph{E'}{V'}\) and \(G_1 \defn \mathgraph{E \setminus E'}{V_1}\).
  Let \(g_1 \defn \cospan{X}{\inclusion}{G_1}{\inclusion}{X'}\) and \(g' \defn \cospan{X'}{\inclusion}{G'}{}{\emptyset}\).
  Then, \(g = g_1 \dcomp g'\) because the pushout of \(\spanmorph{V_1}{\inclusion}{X'}{\inclusion}{V'}\) is \(V_1 \union V' = V\).
  By induction hypothesis, we have \(\pTomdec(T')\) monoidal path decomposition of \(g'\) such that \(\decwidth(\pTomdec(T')) = \decwidth(T')\).
  Then, \(\pTomdec(T) \defn \nodegenerator{g_1}{\dcomp_{X'}}{\pTomdec(T')}\) is a monoidal path decomposition of \(g\):
  \begin{center}
    \mwdPwdUpperProofFig{}
  \end{center}
  We compute the width of \(\pTomdec(T)\).
  \begin{align*}
    & \decwidth(\pTomdec(T))\\
    & \defn \max\{\card{V_1}, \card{X'},\decwidth(\pTomdec(T'))\}\\
    & = \max\{\card{V_1}, \decwidth(\pTomdec(T'))\}\\
    & = \max\{\card{V_1}, \decwidth(T')\}\\
    & \codefn \decwidth(T)
  \end{align*}
\end{proof}

\begin{proof}[Proof of~\Cref{lemma:episPreservePathDec}]
  Let \(G = \mathgraph{E}{V}\) and \(H = \mathgraph{F}{W}\).
  Proceed by induction on \(T\).

  If \(T = (V)\), we can define \(\epiTodec{\alpha}(T) \defn (W)\) and compute its width to be \(\decwidth(\epiTodec{\alpha}(T)) \defn \card{W} \leq \card{V} \codefn \decwidth(T)\) because \(\alpha\) is an epimorphism.

  If \(T = (V_1,T')\), then \(V_1 \subseteq V\) and \(T'\) is a recursive path decomposition of \(G' \subseteq G\) such that \(V_1 \union \vertices(G') = V\) and \(\edgesetends{E \setminus \edges(G')} \subseteq V_1\).
  The epimorphism \(\alpha \colon G \to H\) restricts to an epimorphism \(\alpha' \colon G' \to H'\) with \(H' \subseteq H\).
  It, moreover, satisfies that, if \(\alpha'(v) = \alpha'(w)\), then there is a subtree \(T_{0}\) of \(T'\) with \(v,w \in \labelling'(T_{0})\) because \(\alpha\) satisfies this condition.
  Let \(Y' \defn \alpha(V_{1} \intersection \vertices(G'))\).
  Then, there is a recursive path decomposition \(\epiTodec{\alpha'}(T')\) of \((H',Y')\) such that \(\decwidth(\epiTodec{\alpha'}(T')) \leq \decwidth(T')\) by induction hypothesis.
  Define \(\epiTodec{\alpha}(T) \defn (\alpha(V_1),\epiTodec{\alpha'}(T'))\).
  This is a recursive path decomposition of \((H,Y)\) because we can check the conditions in~\Cref{def:recursivePathDec}:
  \begin{itemize}
    \item \(\alpha(V_1) \union \vertices(H')\) because \(V_1 \union \vertices(G') = V\),
    \item \(\edgesetends{F \setminus \edges(H')} \subseteq(\alpha(V_1))\) because \(\edgesetends{E \setminus \edges(G')} \subseteq(V_1)\),
  \end{itemize}
  by surjectivity of \(\alpha\) on vertices.
  We can compute the width of \(\epiTodec{\alpha}(T)\).
  \begin{align*}
  & \decwidth(\epiTodec{\alpha}(T)) \\
  & \defn \max\{\card{\alpha(V_1)},\decwidth(\epiTodec{\alpha'}(T'))\}\\
  & \leq \max\{\card{\alpha(V_1)},\decwidth(T')\}\\
  & \leq \max\{\card{V_1},\decwidth(T')\}\\
  & \codefn \decwidth(T)
  \end{align*}
\end{proof}

\begin{proof}[Proof of~\Cref{prop:catPathDecToPathDec}]
  Let \(G = \mathgraph{E}{V}\).
  Proceed by induction on \(d\).

  If \(d = \leafgenerator{g}\) then we can define \(\mTopdec(d) \defn \leafgenerator{V}\) and compute its width to be \(\decwidth(\mTopdec(d)) \defn \card{V} \codefn \decwidth(d)\).

  If \(d = \nodegenerator{\leafgenerator{h_{1}}}{\dcomp_{Y}}{d'}\) then \(h_1 = \cospan{X}{}{H_1}{}{Y}\), \(d'\) is a monoidal path decomposition of \(h = \cospan{Y}{\partial_{Y}}{H}{}{\emptyset}\) and \(g = h_1 \dcomp h\).
  By induction hypothesis, we can define \(\mTopdec(d')\) path decomposition of \((H,\image(\partial_{Y}))\).
  By \Cref{lemma:episFromComposition}, there are subgraphs \(G_1\) and \(G'\) of \(G\) and graph epimorphisms \(\alpha_1 \colon H_1 \to G_1\) and \(\alpha \colon H \to G'\).
  All the vertices of \(H\) that are identified by \(\alpha\) need to be in \(\image(\partial_{Y})\).
  Thus, \(\alpha\) satisfies the hypothesis of~\Cref{lemma:episPreservePathDec}.
  We can, then, apply this lemma to obtain a recursive path decomposition \(\epiTodec{\alpha}(\mTopdec(d'))\) of \((G',\image(\partial_{Y} \dcomp \alpha))\) such that \(\decwidth(\epiTodec{\alpha}(\mTopdec(d'))) \leq \decwidth(\mTopdec(d'))\).
  Let \(V_1 \defn \vertices(G_1)\), \(E_1 \defn \edges(G_1)\), \(V' \defn \vertices(G')\) and \(E' \defn \edges(G')\).
  We define \(\mTopdec(d) \defn (V_1,\epiTodec{\alpha}(\mTopdec(d')))\).
  This is a recursive path decomposition of \(\Gamma\) because we can check the conditions of \Cref{def:recursivePathDec}.
  \begin{itemize}
    \item \(\image(\partial) \subseteq V_1\) because it factors through \(\vertices(H_1)\),
    \item \(V_1 \union V' = V\) because of composition by pushout,
    \item \(X' \defn V_1 \intersection V'\),
    \item \(\edgesetends{E\setminus E'} = \edgesetends{E_1} \subseteq V_1\) because of composition by pushout.
  \end{itemize}

  We can compute the width of \(\mTopdec(d)\):
  \begin{align*}
    & \decwidth(\mTopdec(d))\\
    & \defn \max\{\card{V_1},\decwidth(\epiTodec{\alpha}(\mTopdec(d')))\}\\
    & \leq \max\{\card{V_1},\decwidth(\mTopdec(d'))\}\\
    & \leq \max\{\card{V_1},\decwidth(d')\}\\
    & \leq \max\{\card{V_1},\card{Y},\decwidth(d')\}\\
    & \codefn \decwidth(d)
  \end{align*}
\end{proof}

\subsection{Monoidal width}\label{app:mon-branch-width}

\begin{proof}[Proof of~\Cref{prop:mwd-branch-width-upper-bound}]
  Let \(G = \mathgraph{E}{V}\) and proceed by induction on \(\treedepth(T)\).

  If \(T = (\Gamma)\), then \(\bTomdec(T) \defn (g)\) and \(\decwidth(T) \defn 1 \codefn \decwidth(\bTomdec(T))\).

  If \(T = \nodegenerator{T_1}{\Gamma}{T_2}\), then, by definition of branch decomposition, \(T_i\) is a branch decomposition of \(\Gamma_i = (G_i,X_i)\) with \(G_i = \mathgraph{E_i}{V_i}\), \(E = E_1 \disjointunion E_2\) with \(E_i \neq \emptyset\), \(V_1 \union V_2 = V\), and \(X_i = (V_1 \intersection V_2) \union (X \intersection V_i)\).
  Let \(g_i = \cospan{X_i}{\inclusion}{G_i}{}{\emptyset}\).
  Then,
  \begin{center}
    \bwdProofFig{}
  \end{center}
  and we can write \(g = (\id{X \intersection X_1} \tensor \codelete_{X_1 \intersection X_2} \tensor \id{X_2 \setminus X_1}) \dcomp (\id{X_1 \setminus X_2} \tensor \cp_{X_1 \intersection X_2} \tensor \id{X_2 \setminus X_1}) \dcomp (g_1 \tensor \id{X_2}) \dcomp g_2\).
  By induction, there are \(\bTomdec(T_i)\) monoidal decompositions of \(g_i\) such that \(\decwidth(\bTomdec(T_i)) \leq \decwidth(T_i) + 1\).
  By \Cref{lemma:mwd-copy}, there is a monoidal decomposition \(\copyMdec(\bTomdec(T_1))\) of \((\id{X \intersection X_1} \tensor \codelete_{X_1 \intersection X_2} \tensor \id{X_2 \setminus X_1}) \dcomp (\id{X_1 \setminus X_2} \tensor \cp_{X_1 \intersection X_2} \tensor \id{X_2 \setminus X_1}) \dcomp (g_1 \tensor \id{X_2})\) such that \(\decwidth(\copyMdec(\bTomdec(T_1))) \leq \max\{\decwidth(\bTomdec(T_1)), \card{X_1}+1\}\).
  Define
  \[\bTomdec(T) \defn \nodegenerator{\nodegenerator{\copyMdec(\bTomdec(T_1))}{\tensor}{\id{X_{2}\setminus X_{1}}}}{\dcomp_{X_2}}{\bTomdec(T_2)}.\]
  Then,
  \begin{align*}
  & \decwidth(\bTomdec(T))\\
  & \defn \max \{\decwidth(\copyMdec(\bTomdec(T_1))), \decwidth(\bTomdec(T_2)), \card{X_2}\}\\
  & = \max \{\decwidth(\bTomdec(T_1)), \decwidth(\bTomdec(T_2)), \card{X_1} + 1, \card{X_2}\}\\
  & \leq \max \{\decwidth(T_1) + 1, \decwidth(T_2) + 1, \card{X_1} + 1, \card{X_2}\}\\
  & \leq \max \{\decwidth(T_1), \decwidth(T_2), \card{X_1}, \card{X_2}\} + 1\\
  & \leq \max \{\decwidth(T_1), \decwidth(T_2), \card{X}\} + 1\\
  & \codefn \decwidth(T) +1
  \end{align*}
  because \(\card{X_i} \leq \decwidth(T_i)\) by definition.
\end{proof}

\begin{remark}\label{rem:compute-images}
  Let \(f \colon A \to C\) and \(g \colon B \to C\) be two functions.
  The union of the images of \(f\) and \(g\) is the image of the coproduct map \(\coproductmap{f}{g} \colon A + B \to C\), i.e. \(\image(f) \union \image(g) = \image(\coproductmap{f}{g})\).
  The intersection of the images of \(f\) and \(g\) is the image of the pullback map \(\pullbackmap{f}{g} \colon A \times_{C} B \to C\), i.e. \(\image(f) \intersection \image(g) = \image(\pullbackmap{f}{g})\).
\end{remark}

\begin{remark}\label{rem:glueing-property}
  Let \(f \colon A \to C\), \(g \colon B \to C\) and \(\phi \colon C \to V\) such that \(\forall \ c \neq c' \in C \ \phi(c) = \phi(c') \implies c,c' \in \image(f)\).
  Then, \(\image(\pullbackmap{f \dcomp \phi}{g \dcomp \phi}) = \image(\pullbackmap{f}{g} \dcomp \phi)\) because
  \begin{align*}
    & \image(\pullbackmap{f}{g} \dcomp \phi) \setminus \image(\pullbackmap{f \dcomp \phi}{g \dcomp \phi}) \\
    & = \{v \in V : \exists a \in A \ \exists b \in B \ \phi(f(a)) = \phi(g(b)) \land f(a) \notin \image(g) \land g(b) \notin \image(f)\}\\
    & = \emptyset
  \end{align*}
\end{remark}

\begin{proof}[Proof of~\Cref{prop:mwd-branch-width-lower-bound}]
  By induction on \(d\).

  If \(d = \leafgenerator{h}\) and \(F = \emptyset\), then \(\mTobdec(d) \defn \emptydec\) and we can compute \(\decwidth(\mTobdec(d)) \defn 0 \leq 2 \cdot \max \{\decwidth(d), \card{A}, \card{B}\}\).

  If \(d = \leafgenerator{h}\) and \(F = \{e\}\), then \(\mTobdec(d) \defn \leafgenerator{\Gamma}\) and we can compute \(\decwidth(\mTobdec(d)) \defn \card{\image(\partial_A \dcomp \phi) \union \image(\partial_B \dcomp \phi)} \leq \card{A} + \card{B} \leq 2 \cdot \max \{\decwidth(d), \card{A}, \card{B}\}\).

  If \(d = \leafgenerator{h}\) and \(\card{F} > 1\), then let \(\mTobdec(d)\) be any recursive branch decomposition of \(\Gamma\).
  Its width is not greater than the number of vertices in \(\Gamma\), thus \(\decwidth(\mTobdec(d)) \leq \card{\image(\phi)} \leq 2 \cdot \max\{\decwidth(d), \card{A}, \card{B}\}\).

  If \(d = \nodegenerator{d_1}{\dcomp_C}{d_2}\), then \(d_i\) is a monoidal decomposition of \(h_i\) with \(h = h_1 \dcomp_C h_2\).
  Let \(h_1 = \cospan{A}{\partial^1_A}{H_1}{\partial_1}{C}\) and \(h_2 = \cospan{C}{\partial_2}{H_2}{\partial^2_B}{B}\) with \(H_i = \mathgraph{F_i}{W_i}\).
  \begin{center}
    \begin{tikzcd}
      & & {V} & & \\
      & & {W} \arrow[u, "{\phi}"'] & & \\
      {A} \arrow[r, "{\partial_A^1}"] & {W_{1}} \arrow[ru, "{\inclusion[1]}"] \arrow[ruu, "{\phi_1}", bend left] & & {W_{2}} \arrow[lu, "{\inclusion[2]}"'] \arrow[luu, "{\phi_2}"', bend right] & {B} \arrow[l, "{\partial_B^2}"']\\
      & & {C} \arrow[lu, "{\partial_1}"'] \arrow[ru, "{\partial_2}"] & &
    \end{tikzcd}
  \end{center}
  Let \(\inclusion[i] \colon W_i \to W\) be the functions induced by the pushout.
  Define \(\phi_i \defn \inclusion[i] \dcomp \phi\).
  We show that \(\phi_1\) satisfies the glueing property:
  let \(w \neq w' \in W_1\) such that \(\phi_1(w) = \phi_1(w')\).
  Then, \(\inclusion[1](w) = \inclusion[1](w')\) or \(\phi(\inclusion[1](w)) = \phi(\inclusion[1](w')) \land \inclusion[1](w) \neq \inclusion[1](w')\).
  Then, \(w,w' \in \image(\partial_1)\) or \(\inclusion[1](w), \inclusion[1](w') \in \image(\partial_A \dcomp \phi) \union \image(\partial_B \dcomp \phi)\).
  Then, \(w,w' \in \image(\partial_1)\) or \(w,w' \in \image(\partial^1_A)\).
  Then, \(w,w' \in \image(\partial_1) \union \image(\partial^1_A)\).
  Similarly, we can show that \(\phi_2\) satisfies the same property.
  Then, we can apply the induction hypothesis to get a recursive branch decomposition \(\mTobdec(d_1)\) of \(\Gamma_1 = (\mathgraph{F_1}{\image(\phi_1)}, \image(\partial^1_A \dcomp \phi_1) \union \image(\partial_1 \dcomp \phi_1))\) and a recursive branch decomposition \(\mTobdec(d_2)\) of \(\Gamma_2 = (\mathgraph{F_2}{\image(\phi_2)}, \image(\partial^2_B \dcomp \phi_2) \union \image(\partial_2 \dcomp \phi_2))\) such that \(\decwidth(\mTobdec(d_1)) \leq 2 \cdot \max\{\decwidth(d_1),\card{A},\card{C}\}\) and  \(\decwidth(\mTobdec(d_2)) \leq 2 \cdot \max\{\decwidth(d_2),\card{B},\card{C}\}\).

  We check that we can define a recursive branch decomposition of \(\Gamma\) from \(\mTobdec(d_1)\) and \(\mTobdec(d_2)\).
  \begin{itemize}
    \item \(F = F_1 \disjointunion F_2\) because the pushout is along discrete graphs.
    \item \(\image(\phi) = \image(\phi_1) \union \image(\phi_2)\) because \(\image(\coproductmap{\inclusion[1]}{\inclusion[2]}) = W\) and \(\image(\phi_1) \union \image(\phi_2) = \image(\inclusion[1] \dcomp \phi) \union \image(\inclusion[2] \dcomp \phi) = \image(\coproductmap{\inclusion[1]}{\inclusion[2]} \dcomp \phi) = \image(\phi)\).
    \item \(\image(\coproductmap{\partial^1_A}{\partial_1} \dcomp \phi_1) = \image(\phi_1) \intersection (\image(\phi_2) \union \image(\partial_A \dcomp \phi) \union \image(\partial_B \dcomp \phi))\) because
     \begin{align*}
       & \image(\phi_1) \intersection (\image(\phi_2) \union \image(\partial_A \dcomp \phi) \union \image(\partial_B \dcomp \phi)) \\
       \explain{by definition of \(\phi_i\)}\\
       & \image(\inclusion[1] \dcomp \phi) \intersection (\image(\inclusion[2] \dcomp \phi) \union \image(\partial_A \dcomp \phi) \union \image(\partial_B \dcomp \phi)) \\
       \explain{because \(\image(\partial_B) = \image(\partial^2_B \dcomp \inclusion[2]) \subseteq \image(\inclusion[2])\)}\\
       & \image(\inclusion[1] \dcomp \phi) \intersection (\image(\inclusion[2] \dcomp \phi) \union \image(\partial_A \dcomp \phi)) \\
       \explain{by~\Cref{rem:compute-images}}\\
       & \image(\inclusion[1] \dcomp \phi) \intersection \image(\coproductmap{\inclusion[2]}{\partial_A} \dcomp \phi) \\
       \explain{by~\Cref{rem:compute-images}}\\
       & \image(\pullbackmap{\inclusion[1] \dcomp \phi}{\coproductmap{\inclusion[2]}{\partial_A} \dcomp \phi})\\
       \explain{by~\Cref{rem:glueing-property}}\\
       & \image(\pullbackmap{\inclusion[1]}{\coproductmap{\inclusion[2]}{\partial_A}} \dcomp \phi)\\
       \explain{because pullbacks commute with coproducts}\\
       & \image(\coproductmap{\pullbackmap{\inclusion[1]}{\inclusion[2]}}{\pullbackmap{\inclusion[1]}{\partial_A}} \dcomp \phi)\\
       \explain{because \(\partial_A = \partial^1_A \dcomp \inclusion[1]\)}\\
       & \image(\coproductmap{\pullbackmap{\inclusion[1]}{\inclusion[2]}}{\partial_A} \dcomp \phi)\\
       \explain{because \(\partial_1 \dcomp \inclusion[1] = \partial_2 \dcomp \inclusion[2]\) is the pushout map of \(\partial_1\) and \(\partial_2\)}\\
       & \image(\coproductmap{\partial_1 \dcomp \inclusion[1]}{\partial^1_A \dcomp \inclusion[1]} \dcomp \phi)\\
       \explain{by property of the coproduct}\\
       & \image(\coproductmap{\partial_1}{\partial^1_A} \dcomp \phi_1)
     \end{align*}
    \item \(\image(\coproductmap{\partial_2}{\partial^2_B} \dcomp \phi_2) = \image(\phi_2) \intersection (\image(\phi_1) \union \image(\partial_A \dcomp \phi) \union \image(\partial_B \dcomp \phi))\) similarly to the former point.
  \end{itemize}
  Then, \(\mTobdec(d) \defn \nodegenerator{\mTobdec(d_1)}{\Gamma}{\mTobdec(d_2)}\) is a recursive branch decomposition of \(\Gamma\) and
  \begin{align*}
    & \decwidth(\mTobdec(d)) \\
    & \defn \max\{\decwidth(\mTobdec(d_1)), \card{\image(\coproductmap{\partial_A}{\partial_B})}, \decwidth(\mTobdec(d_2)) \}\\
    & \leq \max\{2 \cdot \decwidth(d_1), 2 \cdot \card{A}, 2 \cdot \card{C}, \card{A} + \card{B}, 2 \cdot \decwidth(d_2), 2 \cdot \card{B} \}\\
    & \leq 2 \cdot \max\{\decwidth(d_1), \card{A}, \card{C}, \decwidth(d_2), \card{B} \}\\
    & \codefn 2 \cdot \max\{\decwidth(d), \card{A}, \card{B} \}
  \end{align*}

  If \(d = \nodegenerator{d_1}{\tensor}{d_2}\), then \(d_i\) is a monoidal decomposition of \(h_i\) with \(h = h_1 \tensor h_2\).
  Let \(h_i = \cospan{X_i}{\partial^i_X}{H_i}{\partial^i_Y}{Y_i}\) with \(H_i = \ctgraph{F_i}{W_i}\).
  Let \(\inclusion[i] \colon W_i \to W\) be the inclusions induced by the monoidal product.
  Define \(\phi_i \defn \inclusion[i] \dcomp \phi\).
  We show that \(\phi_1\) satisfies the glueing property:
  Let \(w \neq w' \in W_1\) such that \(\phi_1(w) = \phi_1(w')\).
  Then, \(\inclusion[1](w) = \inclusion[1](w')\) or \(\phi(\inclusion[1](w)) = \phi(\inclusion[1](w')) \land \inclusion[1](w) \neq \inclusion[1](w')\).
  Then, \(\inclusion[1](w), \inclusion[1](w') \in  \image(\partial_A \dcomp \phi) \union \image(\partial_B \dcomp \phi)\) because \(\inclusion[i]\) are injective.
  Then, \(w,w' \in \image(\partial^1_A) \union \image(\partial^1_B)\).
  Similarly, we can show that \(\phi_2\) satisfies the same property.
  Then, we can apply the induction hypothesis to get \(\mTobdec(d_i)\) recursive branch decomposition of \(\Gamma_i = (\mathgraph{F_i}{\image(\phi_i)}, \image(\coproductmap{\partial^i_A}{\partial^i_B} \dcomp \phi_i))\) such that \(\decwidth(\mTobdec(d_i)) \leq 2 \cdot \max\{\decwidth(d_i),\card{A_i},\card{B_i}\}\).

  We check that we can define a recursive branch decomposition of \(\Gamma\) from \(\mTobdec(d_1)\) and \(\mTobdec(d_2)\).
  \begin{itemize}
    \item \(F = F_1 \disjointunion F_2\) because the monoidal product is given by the coproduct in \(\Set\).
    \item \(\image(\phi) = \image(\phi_1) \union \image(\phi_2)\) because \(\image(\coproductmap{\inclusion[1]}{\inclusion[2]}) = W\) and \(\image(\phi_1) \union \image(\phi_2) = \image(\inclusion[1] \dcomp \phi) \union \image(\inclusion[2] \dcomp \phi) = \image(\coproductmap{\inclusion[1]}{\inclusion[2]} \dcomp \phi) = \image(\phi)\).
    \item \(\image(\coproductmap{\partial^1_A}{\partial^1_B} \dcomp \phi_1) = \image(\phi_1) \intersection (\image(\phi_2) \union \image(\partial_A \dcomp \phi) \union \image(\partial_B \dcomp \phi))\) because
     \begin{align*}
       & \image(\phi_1) \intersection (\image(\phi_2) \union \image(\partial_A \dcomp \phi) \union \image(\partial_B \dcomp \phi)) \\
       \explain{by definition of \(\phi_i\)}\\
       & \image(\inclusion[1] \dcomp \phi) \intersection (\image(\inclusion[2] \dcomp \phi) \union \image(\partial_A \dcomp \phi) \union \image(\partial_B \dcomp \phi)) \\
       \explain{by~\Cref{rem:compute-images} and property of the coproduct}\\
       & \image(\inclusion[1] \dcomp \phi) \intersection \image(\coproductmap{\inclusion[2]}{\coproductmap{\partial_A}{\partial_B}} \dcomp \phi)\\
       \explain{by~\Cref{rem:compute-images}}\\
       & \image(\pullbackmap{\inclusion[1] \dcomp \phi}{\coproductmap{\inclusion[2]}{\coproductmap{\partial_A}{\partial_B}} \dcomp \phi})\\
       \explain{by~\Cref{rem:glueing-property}}\\
       & \image(\pullbackmap{\inclusion[1]}{\coproductmap{\inclusion[2]}{\coproductmap{\partial_A}{\partial_B}}} \dcomp \phi)\\
       \explain{because pullbacks commute with coproducts}\\
       & \image(\coproductmap{\pullbackmap{\inclusion[1]}{\inclusion[2]}}{\pullbackmap{\inclusion[1]}{\coproductmap{\partial_A}{\partial_B}}} \dcomp \phi)\\
       \explain{because \(\pullbackmap{\inclusion[1]}{\inclusion[2]} = \initmap{}\)}\\
       & \image(\pullbackmap{\inclusion[1]}{\coproductmap{\partial_A}{\partial_B}} \dcomp \phi)\\
       \explain{because \(\partial_A = \partial^1_A + \partial^2_A\) and \(\partial_B = \partial^1_B + \partial^2_B\)}\\
       & \image(\coproductmap{\partial^1_A \dcomp \inclusion[1]}{\partial^1_B \dcomp \inclusion[1]} \dcomp \phi)\\
       \explain{by property of the coproduct}\\
       & \image(\coproductmap{\partial^1_A}{\partial^1_B} \dcomp \phi_1)
     \end{align*}
    \item \(\image(\coproductmap{\partial^2_A}{\partial^2_B} \dcomp \phi_2) = \image(\phi_2) \intersection (\image(\phi_1) \union \image(\partial_A \dcomp \phi) \union \image(\partial_B \dcomp \phi))\) similarly to the former point.
  \end{itemize}
  Then, \(\mTobdec(d) \defn \nodegenerator{\mTobdec(d_1)}{\Gamma}{\mTobdec(d_2)}\) is a recursive branch decomposition of \(\Gamma\) and
  \begin{align*}
    & \decwidth(\mTobdec(d)) \\
    & \defn \max\{\decwidth(\mTobdec(d_1)), \card{\image(\coproductmap{\partial_A}{\partial_B})}, \decwidth(\mTobdec(d_2)) \}\\
    & \leq \max\{2 \cdot \decwidth(d_1), 2 \cdot \card{A_1}, 2 \cdot \card{B_1}, \card{A} + \card{B}, 2 \cdot \decwidth(d_2), 2 \cdot \card{A_2}, 2 \cdot \card{B_2} \}\\
    & \leq 2 \cdot \max\{\decwidth(d_1), \card{A}, \decwidth(d_2), \card{B} \}\\
    & \codefn 2 \cdot \max\{\decwidth(d), \card{A}, \card{B} \}
  \end{align*}
\end{proof}

\end{document}